\newcommand{\ip}[2]{(#1,#2)} 
\newtheorem{theorem}{Theorem}[section]
\newtheorem{lemma}[theorem]{Lemma}
\newtheorem{corollary}[theorem]{Corollary}
\newtheorem{proposition}[theorem]{Proposition}
\theoremstyle{definition} \newtheorem{definition}[theorem]{Definition}
\newtheorem{example}[theorem]{Example}
\newtheorem{remark}[theorem]{Remark}
\newcommand{\Mid}{\,\Big|\,}
 \newcommand{\cC}{\mathcal{C}}
\newcommand{\cA}{\mathcal{A}} 
\newcommand{\cF}{\mathcal{F}} 
\newcommand{\cH}{\mathcal{H}} \newcommand{\cN}{\mathcal{N}}
 \newcommand{\cP}{\mathcal{P}}
 \newcommand{\cY}{\mathcal{Y}}
\newcommand{\SO}{\mathrm{SO}} \newcommand{\SU}{\mathrm{SU}}
\newcommand{\mP}{\mu_{\cP}} \newcommand{\mPh}{\widehat{\mu}}
\newcommand{\supp}{\mathrm{supp}} 
 \newcommand{\fg}{\mathfrak{g}}
 \newcommand{\fF}{\mathfrak{F}}
\newcommand{\fk}{\mathfrak{k}}
 \newcommand{\fs}{\mathfrak{s}}
 \newcommand{\bX}{\mathbf{X}}
\newcommand{\bT}{\mathbf{T}} 
\newcommand{\bbD}{\mathbb{D}} %
 \newcommand{\cS}{\mathcal{S}}
\newcommand{\D}{\mathbb{D}} 
\newcommand{\R}{\mathbb{R}} \newcommand{\C}{\mathbb{C}}
 \newcommand{\N}{\mathbb{N}}
\newcommand{\Z}{\mathbb{Z}} 
\newcommand{\Psp}{\mathcal{P}_{\mathrm{sp}}(\bX)}
\newcommand{\pr}{\mathrm{pr}}
\newcommand{\rU}{\mathrm{U}}
\newcommand{\Ad}{\operatorname{Ad}}
 \newcommand{\so}{\mathfrak{so}}
\newcommand{\id}{\mathrm{id}} 
\newcommand{\Tr}{\mathrm{Tr}}
\newcommand{\innp}{\langle \,\, ,\,\, \rangle}
 \newcommand{\I}{\mathbf{1}}
\newcommand{\osc}{\mathrm{osc}}
\newcommand{\bbT}{\mathbb{T}} \newcommand{\bbH}{\mathbb{H}}
\newcommand{\lar}{\mathop{\longrightarrow}}
\begin{document}

\title[Sampling on Commutative Spaces]{Sampling in Spaces of Bandlimited Functions on Commutative Spaces}
\subjclass[2000]{Primary 43A15,43A85; Secondary 46E22,22D12}
\keywords{Sampling, bandlimited functions, reproducing kernel Hilbert spaces,  Gelfand pairs, commutative spaces, representation theory, abstract harmonic analysis}

\author{Jens Gerlach Christensen}

\thanks{The research of J. G. Christensen was partially supported by
  NSF grant DMS-0801010, and ONR grants NAVY.N0001409103, NAVY.N000140910324}
\address{Department of mathematics,
University of Maryland, College Park}
\email{jens@math.umd.edu}

\author{Gestur \'Olafsson}\thanks{The research of G. \'Olafsson was supported by NSF Grant DMS-0801010}
\address{Department of Mathematics\\
Louisiana State University\\
Baton Rouge, LA 70803, USA }
\email{olafsson@math.lsu.edu}


\maketitle

\begin{abstract} A connected homogeneous space $\bX=G/K$ is called commutative if $G$ is a connected Lie group, $K$ is a compact subgroup
and the $B^*$-algebra $L^1(\bX )^K$ of $K$-invariant integrable function on $\bX$ is commutative. In this article we introduce the space
 $L^2_\Omega (\bX)$ of $\Omega$-bandlimited function on $\bX$ by using
 the spectral decomposition of $L^2 (\bX)$. We show that those spaces
 are reproducing kernel Hilbert spaces and determine the reproducing
 kernel. We then prove sampling results for those spaces using the
 smoothness of the elements in $L^2_\Omega (\bX)$. At the end we
 discuss the example of $\R^d$, the spheres $S^d$, compact symmetric
 spaces and the Heisenberg group realized
 as the commutative space $\rU (n)\ltimes \bbH_n/\rU (n)$.
\end{abstract}
\section*{Introduction}
\noindent
Reconstruction or approximation of functions using the values of the function or a natural transformation of the function at discrete sets of points has an old and prominent history. A well known example is the reconstruction of a function using discrete set of line integrals, a fundamental tool in Computerized Tomography. Stepping into the digital age has only made this more important. But \textit{sampling theory} as independent mathematical subject originates from the fundamental article \cite{Shannon1948}. We refer to \cite{Zayed1993}, in particular the introduction, and \cite{Zayed2004} for a good places to consult about the history of the subject.

Sampling theory is a field of interest to engineers, signal analysts
and mathematicians alike. It is concerned with the reconstruction of a
function or signal from its values at a certain collection of
points. Sampling theory is concerned with many questions:
\begin{enumerate}
\item Which classes of signals can we expect to reconstruct?
\item Which conditions do the sampling points have to satisfy?
\item How are the signals reconstructed from samples?
\item Reconstruction algorithms and error analysis.
\item The speed of the reconstruction.
\end{enumerate}

The first, and arguably most famous, result is the
Whittaker-Shannon-Kotelnikov sampling theorem which states that an
audible (band-limited) signal can be reconstructed from its values at
equidistant sample points if the samples are taken at the Nyquist
rate.  If sampling takes place at a slower rate the signal cannot be
reconstructed.  With a higher sampling rate (oversampling) than the
Nyquist rate the signal can be reconstructed by many different
methods.  Some of the developed methods also apply
in the case when the samples are not equidistant (irregular sampling)
but within the Nyquist rate.

Frames \cite{Ole,Heil} are generalization of bases and are relatively new addition to mathematics but have become increasingly important for approximation theory, reconstruction in function spaces,  time frequency analysis analysis and generalizations to shift (translation) invariant spaces on topological groups and homogeneous spaces. So there is no surprise that frames have also been widely used in sampling theory. We will not go into detail here, but would like to point out the article by J. Benedetto \cite{Benedetto1991} and by J. Benedetto and W. Heller  \cite{BenedettoHeller1990} as well as the fundamental work by Feichtinger, Gr\"ochenig and their co-authors \cite{Feicht1991,Feicht1990,FeichtGroech1990,FeichtGroech1992,FeichtGroech1992a,FeichtGroech1994,FeichtPrandey2003,FuhrGroechenig2007}. Again, we refer to \cite{Zayed1993}, in particular Chapter 10, for a good overview.

The natural generalization of the spaces of bandlimited functions on $\mathbb{R}^n$ are similarly defined translation invariant spaces of functions on Lie groups and homogeneous spaces, in particular homogeneous Riemannian spaces, a subject closely related to the coorbit theory of Feichtinger and Gr\"ochenig and more general reproducing kernel Hilbert spaces related to unitary representations of Lie groups, \cite{Christensen2011,ChristOlafss2009,ChristOlafsson2011,FeichtGroech1986,FeichtGroech1989a,FeichtGroech1989b, Fuhr2005}. Here we have some natural tools from analysis at our disposal, including a natural algebra of invariant differential operators, in particular the Laplace operator. Mostly then the space of bandlimited functions are defined in terms of boundness of the eigenvalues of Laplace operator. The bounded
geometry of the space allows us then to derive the needed Sobolev and Bernstein inequality. We point out \cite{FeichtPesenson2004,FeichtPesenson2005,FuhrGroechenig2007,GellerPesenson2011,Pesenson2008} as important contributions to the subject.

This article is organized as follows. In Section \ref{S:1} we recall some standard notation for Lie groups $G$ and homogeneous spaces $\bX$. We introduce the algebra of invariant differential operators on homogeneous spaces and connect it with the algebra of invariant polynomials on $T_{x_o}(\bX)$, where $x_o$ is a fixed base-point. We then recall some basic fact about representations, and
in particular, we introduce the space of smooth and analytic vectors.
In \ref{S:2} we discuss sampling in reproducing kernel Hilbert spaces on Lie groups. The main ideas are based on the work of Feichtinger and Gr\"ochenig.

Section \ref{S:3} deals with oscillation estimates on Lie groups. The exposition is based on \cite{Christensen2011} and uses smoothness of the the functions to derive oscillation results and hence sampling theorems.

We introduce the notion of Gelfand pairs and commutative spaces in Section \ref{S:4}. Here we assume that the group is connected, which allows us to state that $\bX=G/K$ is commutative if and only if the algebra  $\mathbb{D}(\bX)$ of $G$-invariant differential operators is commutative. We review well known facts about  positive definite functions and the spherical Fourier transform on $\bX$. One of the main results in this section is a recent theorem of Ruffino \cite{Ruffino} which allows us to identify the parameter set for the spherical Fourier transform with a subset of $\mathbb{C}^s$, where $s$ is the number of generators for $\mathbb{D}(\bX)$. The result by Ruffino generalizes statements about
Gelfand pairs related to the Heisenberg group by Benson, Jenkins,
Ratcliff and Worku \cite{Benson1996}.
In Section \ref{S:6} we relate the positive definite spherical functions to $K$-spherical representations of $G$ and introduce the vector valued Fourier transform on $\bX$. This relates the representation theory of $G$ to the harmonic analysis on $\bX$.

In Section \ref{S:7} we finally introduce the space of bandlimited functions on $\bX$. The definition is based on the support of the Plancherel-measure on $\bX$ and not the spectrum of the Laplacian on $\bX$ as in \cite{FeichtPesenson2004,FeichtPesenson2005,Pesenson2008}. We do not prove it, but in all examples the definitions of a bandlimited functions are equivalent,
but our definition allows for more general spectral sets.
Another benefit of our approach is that
one does not have to worry about injectivity radius for
the exponential function nor about
the construction of smooth partitions of unity (characteristic
functions for a disjoint cover can be used just as well).
Our sampling result is proved in \ref{S:8} using a Bernstein inequality for the space of bandlimited functions. Finally, in Section \ref{se-Ex}, we give some examples of commutative spaces and their spherical harmonic analysis. Those examples include the spheres, and more generally, compact symmetric spaces, and the Heisenberg group as a homogenous space for the group $\mathrm{U}(n)\ltimes \mathbb{H}_n$. This article is partially based on \cite{ChrisMayeliOlafss2011}.

\section{Notation and preliminaries}\label{S:1}

\subsection{Locally compact groups}
In the following $G$ denotes a locally compact group with left
invariant Haar measure $\mu_G$.  Sometimes we write $dx$ instead of
$d\mu_G(x)$.  For $1\leq p<\infty$ we let $L^p(G)$ denote the space of
equivalence classes of $p$-integrable functions on $G$ with norm
\begin{equation*}
  \| f\|_{L^p} = \Big( \int |f(x)|^p\,dx \Big)^{1/p}.
\end{equation*}
Further, let $L^\infty(G)$ denote the space of
essentially bounded function on $G$ with norm
\begin{equation*}
  \| f\|_\infty = \mathrm{ess}\,\sup_{x\in G} |f(x)|
\end{equation*}
The spaces $L^p(G)$ are Banach spaces for $1\le p\le \infty$ and
$L^2(G)$ is a Hilbert space with inner product
\begin{equation*}
  \ip{f}{g} = \int f(x)\overline{g(x)}\,dx.
\end{equation*}
When it makes sense (either the integrand is integrable or as a vector
valued integral) we define the convolution
\begin{equation*}
  f*g(x) = \int f(y)g(y^{-1}x)\,dy.
\end{equation*}
Equipped with convolution the space $L^1(G)$ becomes a
Banach algebra.  For functions on $G$ we denote the left and right
translations by
\begin{equation*}
  \ell(a) f(x) = f(a^{-1}x)
  \qquad\text{and}\qquad
  \rho(a) f(x) = f(xa)
\end{equation*}
respectively.  Now, let $K$ be a compact subgroup of $G$ with
bi-invariant Haar measure $\mu_K$.  We always normalize $\mu_K$
so that $\mu_K (K)=1$.  The same convention
applies to other compact groups and compact spaces.

If $\cA$ is a set of functions on $G$ we denote the left $K$-fixed
subset as
\begin{equation*}
  \cA^K = \{ f\in \cA \mid \ell(k)f=f   \}
\end{equation*}
and similarly the right $K$-fixed subset is denoted
\begin{equation*}
  \cA^{\rho(K)} = \{ f\in \cA \mid \rho(k) f=f   \}
\end{equation*}
Let $\bX =G/K$, $x_0=eK$ and let $\kappa:G\to \bX$ be the canonical
map $g\mapsto gx_0$.  We will identify functions on $\bX$ by
$K$-invariant functions via $f\leftrightarrow f\circ \kappa$.  The
space $\bX$ possesses a $G$-invariant measure $\mu_\bX$ and, since $K$
is compact, the $L^p$-spaces
$$
L^p(\bX) = \{ f \mid f \text{ is $\mu_\bX$-measurable and } \int
|f(x)|^p\,d\mu_\bX < \infty \}.
$$
The above map $f\mapsto f\circ \kappa$ is an isometric isomorphism $L^p(\bX)\simeq L^p(G)^{\rho (G)}$.
In particular, $L^p(\bX)$ is a
closed $G$-invariant subspace of $L^p(G)$. The projection $L^p(G)\to
L^p(\bX)$ is
\begin{equation*}
  p_K(f)(x) = \int_K f(xk)\,d\mu_K(k)\, .
\end{equation*}

If $f\in L^1(G)$ and $g\in L^p (\bX)$, $1\le p\le \infty$, then
$f*g\in L^p (\bX)$ and $\|f*g\|_p\le \|f\|_1\|g\|_p$.

If $f$ is further assumed to be left $K$-invariant, then
\begin{eqnarray*} f*g(ky)&=&\int_G f(x)g(x^{-1}ky)\, d\mu_G(x)\\
  &=&\int_G f(kx) g(x^{-1}y)\, d\mu_G(x)\\ &=&\int_G f(x)g (x^{-1}y)
  \, d\mu_G (x)\\ &=& f*g (y)\, .
\end{eqnarray*}
Thus $f*g$ is also left $K$-invariant. Denote by $m_G$ the
modular function on
$G$. Note that $m_G$ is usually denoted by $\Delta$ or $\Delta_G$ but we will need that notation
for the Laplace operator on $\bX$ respectively  $G$. We have
$m_G|_K=1$ as $K$ is compact.  Hence $m_G$ is
$K$-biinvariant. It follows that $L^1 (\bX)^K$ is invariant under the
anti-involutions $ f^\vee (x)=m_G(x)^{-1}f(x^{-1})$ and
$f^*=\overline{f^\vee}$. In particular, $L^1(\bX )^K$ is a closed
Banach $*$-subalgebra of $L^1(G)$.

\subsection{Lie theory}
Let $G$ be a connected Lie group and $K$ a compact subgroup.  Most of
the statements holds for nonconnected groups, but some technical
problems turn up as we start to deal with the Lie algebra and
invariant differential operators. We will therefore for simplicity assume $G$ commutative from the beginning.

Denote by $\fg$ the Lie algebra of $G$ and $\fk$ the Lie algebra of
$K$. Fix a $K$-invariant inner product $\innp$ on $\fg$. That is
always possible as $K$ is compact. Let $\fs:= \fk^\perp$.  Then $\fs$
is $K$-invariant and $\fs\simeq T_{x_o}(\bX)$ (as a $K$-module) via
the map
\[X\mapsto D(X)\, ,\quad D(X)
(f)(x_o):=\left.\dfrac{d}{dt}\right|_{t=0}f(\exp (tX) x_o)\, .\]
Denote also by $\innp$ the restriction of $\innp$ to $\fs\times
\fs$. As the tangent bundle on $T(\bX)$ is isomorphic to $G\times_K \fs$ as a $G$-bundle it
follows that the restriction of $\innp$ to $\fs$ defines a
$G$-invariant Riemannian structure on $\bX$.

Let $D : C_c^\infty (\bX) \to C^\infty_c(\bX)$ be a differential
operator. For $g\in G$ let $g\cdot D :C^\infty_c(\bX) \to
C^\infty_c(\bX)$ be the differential operator
\[g\cdot D (f)(x) = D(\ell({g^{-1}})f)(g^{-1} x)\, .\] $D$ is said to
be $G$-invariant if $g\cdot D=D$ for all $g\in G$. Thus $D$ is
$G$-invariant if and only if $D$ commutes with left translation,
$D(\ell(g) f)=\ell(g) D(f)$. Denote by $\bbD (\bX)$ the algebra of
$G$-invariant differential operators on $\bX$. The algebra $\bbD
(\bX)$ has a simple description. For a polynomial function $p : \fg\to
\C$ define a left-invariant differential operator $D_p : C_c^\infty
(G)\to C_c^\infty (G)$ by
\begin{eqnarray} D_p(f)(g)&:=&p\left(\dfrac{\partial} {\partial
      t_1},\ldots ,\dfrac{\partial} {\partial t_m}\right)f(g\exp
  (t_1X_1+\ldots + t_mX_m))|_{t_1=\ldots =t=0}\label{Dp}\\
  &=&p\left(\dfrac{\partial} {\partial t_1},\ldots ,\dfrac{\partial}
    {\partial t_m}\right)f(g\exp (t_1X_1)\cdots \exp(
  t_mX_m))|_{t_1=\ldots =t=0}\label{Dp1}
\end{eqnarray}
where we have extended our basis of $\fs$ to a basis $X_1,\ldots ,X_m$
of $\fg$. If $p$ is a $K$-invariant then
\begin{eqnarray*}
  D_p(f)(gk)&=&p\left(\dfrac{\partial} {\partial t_1},\ldots
    ,\dfrac{\partial} {\partial t_m}\right)f(gk\exp (t_1X_1+\ldots +
  t_nX_m))|_{t_1=\ldots =t_m=0}\\
  &=&p\left(\dfrac{\partial} {\partial t_1},\ldots
    ,\dfrac{\partial} {\partial t_n}\right)f(g\exp (t_1X_1+\ldots +
  t_mX_m)k)|_{t_1=\ldots =t_m=0}
\end{eqnarray*}
for all $k\in K$. Hence, if $p$ is $K$-invariant and $f$ is right
$K$-invariant it is clear from (\ref{Dp1}) that $D_p$ only depends on
the polynomial $q=p|_{\fs}$ and $D_pf=D_qf$ is right $K$-invariant and
defines a function on $\bX$. Hence $D_q$ is a $G$-invariant
differential operator on $\bX$.

Denote by $S (\fs)$ the symmetric algebra over $\fs$. Then $S(\fs )$ is commutative and isomorphic to the algebra of polynomial functions.

\begin{theorem} The map $S(\fs)^K\to \bbD (\bX)$ is bijective.
\end{theorem}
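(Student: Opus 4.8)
The plan is to realise the map concretely through the ``jet at the base point'' and then read off injectivity and surjectivity from $G$-invariance. Throughout I use that $X\mapsto\exp(X)x_o$ is a diffeomorphism from a neighbourhood of $0$ in $\fs$ onto a neighbourhood of $x_o$ in $\bX$ (its differential at $0$ is the identification $\fs\simeq T_{x_o}(\bX)$, $X\mapsto D(X)$, of the text), giving normal coordinates at $x_o$. For $q\in S(\fs)$ and $f\in C^\infty(\bX)$ let $q(D)f(x_o)$ denote the value at $0$ of the constant-coefficient differential operator on $\fs$ attached to $q$, applied to $X\mapsto f(\exp(X)x_o)$; so $q\mapsto q(D)f(x_o)$ is the canonical pairing of $S(\fs)$ with the Taylor jet of $f$ at $x_o$, extending $X\mapsto D(X)$ on degree one. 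Taking the $K$-invariant extension $p=q\circ\pr_{\fs}$ of $q$ (legitimate, since the orthogonal projection $\pr_{\fs}\colon\fg\to\fs$ is $\Ad(K)$-equivariant because $\fk$ and $\fs$ are $\Ad(K)$-stable), formula (\ref{Dp}) evaluated at $g=e$ on right-$K$-invariant $f$ gives at once $(D_qf)(x_o)=q(D)f(x_o)$, as only the $\fs$-directions are differentiated before one sets all parameters to $0$; and $G$-invariance, $D_q\ell(g)=\ell(g)D_q$, then yields $(D_qf)(gx_o)=(D_q\ell(g^{-1})f)(x_o)=q(D)(\ell(g^{-1})f)(x_o)$. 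Hence $D_q$, and likewise any $D\in\bbD(\bX)$, is completely determined by its functional $f\mapsto(Df)(x_o)$ on $C^\infty(\bX)$. In particular, if $D_q=0$ then $q(D)f(x_o)=0$ for all $f$; since $\exp$ is a local diffeomorphism the jets of such $f$ at $x_o$ exhaust all polynomial jets on $\fs$, and a nonzero constant-coefficient operator does not annihilate every polynomial, so $q=0$ and the map is injective.

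For surjectivity, let $D\in\bbD(\bX)$. Being a differential operator, $f\mapsto(Df)(x_o)$ depends only on a finite jet of $f$ at $x_o$, so in the chart $\exp$ there is a unique $q_D\in S(\fs)$ with $(Df)(x_o)=q_D(D)f(x_o)$ for all $f\in C^\infty(\bX)$. I claim $q_D\in S(\fs)^K$. For $k\in K$ the chart is equivariant, $\exp(\Ad(k)X)x_o=k\exp(X)x_o$, so $\ell(k)$ acts in normal coordinates by the linear map $\Ad(k)|_{\fs}$, and therefore $q_D(D)(\ell(k)f)(x_o)=(k\cdot q_D)(D)f(x_o)$, where $k\cdot$ is the induced $K$-action on $S(\fs)$. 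On the other hand $D\ell(k)=\ell(k)D$ and $kx_o=x_o$ force $q_D(D)(\ell(k)f)(x_o)=(Df)(k^{-1}x_o)=(Df)(x_o)=q_D(D)f(x_o)$. Comparing for all $f$ gives $k\cdot q_D=q_D$, i.e.\ $q_D\in S(\fs)^K$, so $D_{q_D}$ is defined. Finally $D=D_{q_D}$: both are $G$-invariant operators inducing the same functional at $x_o$ (using $(D_{q_D}f)(x_o)=q_D(D)f(x_o)$ from the first paragraph), and by the rigidity of the first paragraph such operators coincide, since $(Df)(gx_o)=(D\ell(g^{-1})f)(x_o)=(D_{q_D}\ell(g^{-1})f)(x_o)=(D_{q_D}f)(gx_o)$ for all $g\in G$ and $f\in C^\infty(\bX)$.

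The one substantive point is the equivariance computation pinning $q_D$ to $S(\fs)^K$ together with the rigidity of $G$-invariant operators; the identity $(D_qf)(x_o)=q(D)f(x_o)$ and the fact that $q\mapsto q(D)f(x_o)$ captures the full jet are routine computations in the chart $\exp$, and well-definedness of $q\mapsto D_q$ was already settled before the statement. Alternatively, one can run the classical argument: the symmetrization map is a linear isomorphism $S(\fg)\to U(\fg)$, the Poincar\'e--Birkhoff--Witt decomposition $U(\fg)=U(\fs)\oplus U(\fg)\fk$ is $\Ad(K)$-stable (because $\Ad(K)\fs\subseteq\fs$ and $\Ad(K)\fk=\fk$), so taking $K$-invariants gives $U(\fg)^K=U(\fs)^K\oplus\bigl(U(\fg)^K\cap U(\fg)\fk\bigr)$, and combining this with the standard identification $\bbD(\bX)\cong U(\fg)^K/\bigl(U(\fg)^K\cap U(\fg)\fk\bigr)$ matches $\bbD(\bX)$ with $U(\fs)^K\cong S(\fs)^K$.
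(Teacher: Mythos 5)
Your argument is correct, but it is a genuinely different route from the paper's: the paper offers no proof at all, simply citing Theorem 10 of Helgason \cite{He63}. What you do instead is a self-contained direct proof: you identify $(D_qf)(x_o)$ with the constant-coefficient operator $q(D)$ applied to $f$ in the normal chart $X\mapsto \exp(X)x_o$ on $\fs$, observe that a $G$-invariant operator is rigid (determined by its functional at $x_o$), read off injectivity from the fact that jets of smooth functions on $\bX$ realize all polynomial jets on $\fs$, and for surjectivity extract $q_D\in S(\fs)$ from the finite-order functional $f\mapsto (Df)(x_o)$, prove its $K$-invariance from the equivariance $\exp(\Ad(k)X)x_o=k\exp(X)x_o$ together with $D\ell(k)=\ell(k)D$, and conclude $D=D_{q_D}$ by rigidity. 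All the steps check out (the only conventions to watch are harmless: whether the induced action is by $\Ad(k)$ or $\Ad(k^{-1})$ is immaterial once you quantify over $K$, and your use of formula (\ref{Dp}) with $p=q\circ\pr_\fs$ at $g=e$ is legitimate since only $\fs$-derivatives occur). What your route buys is independence from the citation, using nothing beyond the local diffeomorphism and invariance; what the cited route buys is the explicit algebraic isomorphism via symmetrization, which is also better adapted to computing generators of $\bbD(\bX)$. Note, though, that your closing ``alternative'' paragraph is essentially Helgason's own argument and is not really an independent second proof: the identification $\bbD(\bX)\cong U(\fg)^K/\bigl(U(\fg)^K\cap U(\fg)\fk\bigr)$ carries the same content (in particular the surjectivity) as the theorem itself, and $U(\fs)$ there should be read as the image of $S(\fs)$ under symmetrization, since $\fs$ need not be a Lie subalgebra. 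The first two paragraphs of your proposal stand on their own.
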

\begin{proof} This is Theorem 10 in \cite{He63}.
\end{proof}

\begin{remark} If we take $p(X)=\|X\|^2$, then $D_p=:\Delta$ is the Laplace operator on $\bX$.
\end{remark}

\begin{remark} The algebra $\bbD (\bX)$ is not commutative in general. Hence the above map is not necessarily an algebra
homomorphism.
\end{remark}

For a fixed basis $X_1,\ldots ,X_m$ for $\fg$ it will ease our
notation to introduce the differential operator $D^\alpha:C_c^\infty
(G) \to C_c^\infty(G)$ for a multi-index $\alpha$ of length $k$ with
entries between $1$ and $m$:
\begin{equation*}
  D^\alpha f(x)
  = D(X_{\alpha(k)}) D(X_{\alpha(k-1)})\cdots D(X_{\alpha(1)})f(x).
\end{equation*}

\subsection{Representation theory}
Let $\pi$ be a representation of the Lie group $G$ on a Hilbert space
$\cH$.  Then $u\in \cH$ is called \textit{smooth} respectively
\textit{analytic} if the $\cH$-valued function $\pi_u(x) = \pi (x)u$
is smooth respectively analytic. Denote by $\cH^\infty$, respectively
$\cH^\omega$, the space of smooth, respectively analytic, vectors in
$\cH$.  For $u\in \cH^\infty$ and $X\in \fg$ let
\[\pi^\infty (X)u:=\lim_{t\to 0}\frac{\pi (\exp tX)u-u}{t}\] and
$\pi^\omega (X):=\pi^\infty (X)|_{\cH^\omega}$. We have
\begin{lemma} Let $(\pi,\cH)$ be a unitary representation of $G$. Then
  the following holds:
  \begin{enumerate}
  \item The space $\cH^\infty$ is $G$-invariant.
  \item $\pi^\infty (\fg )\cH^\infty \subseteq \cH^\infty$ and
    $(\pi^\infty ,\cH^\infty)$ is a representation of $\fg$. In particular
    \[\pi^\infty ([X,Y])=\pi^\infty( X)\pi^\infty (Y)-
    \pi^\infty (Y)\pi^\infty (X)\, \]
  \item $\pi^\infty (\Ad (g) X)=\pi (g) \pi^\infty (X)\pi (g^{-1})$,
  \item $\pi^\infty (X)^*|_{\cH^\infty}=-\pi^\infty (X)$.
  \item $\cH^\infty$ is dense in $\cH$.
  \end{enumerate}
\end{lemma}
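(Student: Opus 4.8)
The plan is to verify the assertions (a)--(e) in turn; (a)--(d) are direct consequences of the chain rule together with the boundedness of the operators $\pi(g)$, while (e) is G{\aa}rding's theorem, which is the only substantial point. For (a): if $u\in\cH^\infty$ and $g\in G$, then $\pi_{\pi(g)u}(x)=\pi(xg)u=\pi_u(xg)$ is the composite of the smooth $\cH$-valued map $\pi_u$ with the smooth right translation $x\mapsto xg$ on $G$, hence smooth, so $\pi(g)u\in\cH^\infty$. For (b): first, for $u\in\cH^\infty$ the limit defining $\pi^\infty(X)u$ exists because $t\mapsto\pi(\exp tX)u=\pi_u(\exp tX)$ is a smooth curve; moreover $\pi(x)\pi^\infty(X)u=\widetilde X\pi_u(x)$, where $\widetilde X$ is the left-invariant vector field on $G$ with $\widetilde X_e=X$ --- this is the chain rule applied to $t\mapsto\pi_u(x\exp tX)$, the bounded operator $\pi(x)$ passing through the difference quotient. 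Since $\pi_u$ is smooth, so is $\widetilde X\pi_u$, hence $x\mapsto\pi(x)\pi^\infty(X)u$ is smooth, i.e. $\pi^\infty(X)u\in\cH^\infty$; iterating gives $\pi^\infty(X_{i_1})\cdots\pi^\infty(X_{i_k})u=(\widetilde X_{i_1}\cdots\widetilde X_{i_k}\pi_u)(e)$. Linearity of $X\mapsto\pi^\infty(X)$ is immediate, and the bracket relation $\pi^\infty([X,Y])=\pi^\infty(X)\pi^\infty(Y)-\pi^\infty(Y)\pi^\infty(X)$ follows by evaluating the vector-field identity $\widetilde{[X,Y]}=\widetilde X\widetilde Y-\widetilde Y\widetilde X$ on the smooth function $\pi_u$ at $e$; thus $(\pi^\infty,\cH^\infty)$ is a representation of $\fg$.

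For (c): write $\pi^\infty(\Ad(g)X)u=\left.\frac{d}{dt}\right|_{t=0}\pi\left(\exp(t\,\Ad(g)X)\right)u=\left.\frac{d}{dt}\right|_{t=0}\pi\left(g(\exp tX)g^{-1}\right)u$; pulling the bounded operator $\pi(g)$ out of the limit, and using that $\pi(g^{-1})u\in\cH^\infty$ by (a), the remaining limit is $\pi^\infty(X)\pi(g^{-1})u$, so the whole expression equals $\pi(g)\pi^\infty(X)\pi(g^{-1})u$. For (d): for $u,v\in\cH^\infty$, unitarity gives $\langle\pi(\exp tX)u,v\rangle=\langle u,\pi(\exp(-tX))v\rangle$, and differentiating at $t=0$ (legitimate since $v\in\cH^\infty$) yields $\langle\pi^\infty(X)u,v\rangle=-\langle u,\pi^\infty(X)v\rangle$; hence $v\in\mathrm{Dom}(\pi^\infty(X)^*)$ with $\pi^\infty(X)^*v=-\pi^\infty(X)v$, i.e. $\pi^\infty(X)^*|_{\cH^\infty}=-\pi^\infty(X)$.

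For (e): for $\varphi\in C_c^\infty(G)$ set $\pi(\varphi)u=\int_G\varphi(x)\pi(x)u\,d\mu_G(x)$, a convergent $\cH$-valued integral since $x\mapsto\pi(x)u$ is continuous and $\varphi$ has compact support. Then $\pi(y)\pi(\varphi)u=\int_G\varphi(y^{-1}x)\pi(x)u\,d\mu_G(x)=\pi(\ell(y)\varphi)u$, and since $y\mapsto\ell(y)\varphi$ is a smooth map of $G$ into $C_c^\infty(G)$ (the supports staying inside a fixed compact set while $y$ ranges over a compact set) one may differentiate under the integral sign to conclude $\pi(\varphi)u\in\cH^\infty$. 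Finally, choosing $\varphi_n\geq0$ supported in a shrinking neighbourhood basis of $e$ with $\int_G\varphi_n\,d\mu_G=1$, strong continuity of $\pi$ gives $\pi(\varphi_n)u\to u$ in $\cH$; hence the linear span of $\{\pi(\varphi)u:\varphi\in C_c^\infty(G),\,u\in\cH\}$ (the G{\aa}rding space) is dense in $\cH$ and contained in $\cH^\infty$, proving (e). I expect (e) to be the main obstacle: one must justify the vector-valued integral $\pi(\varphi)u$ and, especially, the differentiation under the integral sign --- which rests on $\ell(y)\varphi$ depending smoothly on $y$ in the $C_c^\infty$-topology and on boundedness of the operators $\pi(x)$ --- and then deduce the approximate-identity convergence from strong continuity of the representation. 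The remaining parts use only the chain rule and the fact that a bounded operator commutes with limits and with differentiation.
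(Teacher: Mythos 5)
Your argument is correct and, for the only part the paper actually proves, part (e), it follows exactly the paper's route: form the G{\aa}rding space $\pi(\varphi)\cH$ for $\varphi\in C_c^\infty(G)$, show it lies in $\cH^\infty$ by moving derivatives onto $\varphi$ (the paper's identity $\pi^\infty(X)\pi(f)u=\pi(\ell^\infty(X)f)u$ is what your differentiation under the integral sign amounts to), and conclude density with an approximate identity as in Lemma \ref{le-pifHK}. Parts (a)--(d), which the paper states without proof, are handled by you with the standard chain-rule and left-invariant vector field arguments, and these are fine.
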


Corresponding statements are also true for $\cH^\omega$.  To show that
$\cH^\infty$ is dense in $\cH$ let $f\in L^1(G)$.  Define $\pi (f):
\cH \to \cH$ by
\[\pi (f)u=\int_G f(x)\pi (x)u\, d\mu_G (x)\, .\] Then $\|\pi (f)\|\le
\|f\|_1$, $\pi (f*g)=\pi (f)\pi (g)$ and $\pi (f^*)=\pi (f)^*$. Thus,
$\pi : L^1 (G)\to B(\cH )$ is a continuous $*$-homomorphism. If $f\in
C_c^\infty (G)$ then it is easy to see that $\pi(f)\cH \subseteq
\cH^\infty$. The main step in the proof is to show that
\[\pi^\infty (X)\pi (f)u=\pi (\ell^\infty(X) f)u\] where
\[\ell^\infty(X) f(x)=\lim_{t\to 0}\frac{f(\exp (-tX)x)-f(x)}{t}\, .\]

\begin{lemma}\label{le-pifHK} If $\{U_j\}$ is a decreasing sequence of
  $e$-neighborhoods such that $\bigcap U_j=\{e\}$ and $f_j\in
  C_c^\infty (G)$ is so that $f_j\ge 0$, $\supp f_j\subset U_j$, and
  $\|f\|_1=1$, then $\pi (f_j)u \to u$ for all $u\in \cH$. In
  particular, $\cH^\infty$ is dense in $\cH$.
\end{lemma}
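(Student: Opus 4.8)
\noindent
The plan is to run the standard approximate-identity argument; its one essential ingredient is the strong continuity of the unitary representation $\pi$ (without strong continuity the statement is false --- for left translation one could take the $f_j$ concentrated near a point escaping to infinity). First I would record that, since $f_j\ge 0$ and $\norm{f_j}_1=1$, we have $\int_G f_j(x)\,d\mu_G(x)=1$, so that for any $u\in\cH$
\[\pi(f_j)u-u=\int_G f_j(x)\bigl(\pi(x)u-u\bigr)\,d\mu_G(x)\]
as an $\cH$-valued integral; this is legitimate because $x\mapsto f_j(x)(\pi(x)u-u)$ is continuous with compact support (here strong continuity of $\pi$ is used), or, to avoid vector integrals, one tests against an arbitrary $v\in\cH$ and combines the scalar identity $\dup{\pi(f_j)u}{v}=\int_G f_j(x)\dup{\pi(x)u}{v}\,d\mu_G(x)$ with $\int_G f_j\,d\mu_G=1$.

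Next I would estimate the norm. Passing the norm inside the integral and using $f_j\ge 0$,
\[\norm{\pi(f_j)u-u}\le\int_G f_j(x)\,\norm{\pi(x)u-u}\,d\mu_G(x)\le\sup_{x\in\supp f_j}\norm{\pi(x)u-u}\le\sup_{x\in U_j}\norm{\pi(x)u-u}.\]
Then I would invoke strong continuity of $\pi$ at $e$: given $\eps>0$ there is a neighborhood $V$ of $e$ with $\norm{\pi(x)u-u}<\eps$ for all $x\in V$. Since $\{U_j\}$ is a decreasing sequence of $e$-neighborhoods with $\bigcap_j U_j=\{e\}$, every neighborhood of $e$ eventually contains all the $U_j$, so there is $J$ with $U_j\subseteq V$ for $j\ge J$, and hence $\norm{\pi(f_j)u-u}\le\eps$ for $j\ge J$. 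As $\eps$ and $u$ were arbitrary, $\pi(f_j)u\to u$ in $\cH$ for every $u\in\cH$.

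For the density assertion I would first note that such families $(f_j)$ exist on the Lie group $G$: inside each $U_j$ choose a compact neighborhood of $e$ and a smooth nonnegative bump function supported in it, then normalize in $L^1(G)$; this yields $f_j\in C_c^\infty(G)$ with $f_j\ge 0$, $\supp f_j\subset U_j$ and $\norm{f_j}_1=1$. By the observation recorded just before the lemma, $\pi(f)\cH\subseteq\cH^\infty$ for $f\in C_c^\infty(G)$, so each $\pi(f_j)u\in\cH^\infty$; together with $\pi(f_j)u\to u$ this exhibits every $u\in\cH$ as a norm limit of smooth vectors, i.e.\ $\cH^\infty$ is dense in $\cH$.

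The argument is essentially routine, so the ``main obstacle'' is modest: it is the bookkeeping around the vector-valued integral, and, slightly more substantively, the step that turns the hypothesis $\bigcap_j U_j=\{e\}$ for a decreasing family of neighborhoods into the statement that the $U_j$ are cofinal in the neighborhood filter of $e$ --- that is, that $\{U_j\}$ is to be read as a neighborhood basis at $e$, which is what makes the continuity estimate applicable. Everything else reduces to the single structural input that $\pi$ is strongly continuous, which is built into the notion of unitary representation used here.
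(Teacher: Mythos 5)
Your argument is correct and is exactly the standard approximate-identity proof the paper relies on (the lemma is stated there without a separate proof, and the density assertion rests on $\pi(f)\cH\subseteq\cH^\infty$ for $f\in C_c^\infty(G)$ precisely as you use it). Your caveat about the hypothesis is well taken: literally, a decreasing sequence of $e$-neighborhoods with $\bigcap_j U_j=\{e\}$ need not be cofinal in the neighborhood filter of $e$, but it is as soon as, say, $U_1$ is compact (the sets $U_j\setminus V$ are then decreasing compacta with empty intersection, so some $U_j\subseteq V$), which is the intended reading and the one under which your estimate $\sup_{x\in U_j}\norm{\pi(x)u-u}\to 0$ goes through.
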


\section{Reconstruction in reproducing kernel Hilbert spaces}\label{S:2}
\noindent
We will be concerned with sampling in subspaces of the Hilbert space
$L^2(G)$.
We start with the definition of a frame due to Duffin and Schaeffer
\cite{Duffin1952}. For further references see the introduction.
\begin{definition}
  For a Hilbert space $\cH$ a set of vectors $\{ \phi_i\}\subseteq
  \cH$ is a frame for $\cH$ if there are constants $0<A\leq B<\infty$
  such that
  \begin{equation*}
    A \| f\|_{\cH}^2 \leq \sum_{i} |\ip{f}{\phi_i}|^2 \leq B \| f\|_H^2
  \end{equation*}
  for all $f\in \cH$.
\end{definition}
These conditions ensure that the
frame operator $S:\cH\to \cH$ given by
\begin{equation*}
  Sf = \sum_i \ip{f}{\phi_i}\phi_i
\end{equation*}
is invertible and that $f$ can be reconstructed by
\begin{equation*}
  f = \sum_i \ip{f}{\phi_i}  \psi_i .
\end{equation*}
where $\psi_i=S^{-1}\phi_i$. The sequence $\{\psi_i\}$ is also a frame
called the dual frame. In general there are other ways to reconstruct
$f$ form the sequence $\{(f,\phi_i)\}$.

The inversion of $S$ can be carried out via the Neumann series
\begin{equation} \label{eq-neumann-series} S^{-1} = \frac{2}{A+B}
  \sum_{n=0}^\infty \Big(I-\frac{2}{A+B}S\Big)^n
\end{equation}
which has rate of convergence $\| I-\frac{2}{A+B}S\|\leq
\frac{B-A}{A+B}$ (which is the best possible for optimal frame bounds
\cite{Li1993,Grochenig1993a,Lim1998}).

One way to reconstruct $f$ from samples is to assume that the point
evaluations $f\mapsto f(x)$ is continuous and hence given by an inner
product $f(x)=(f,g_x)$, $g_x\in \cH$, and that there exists a sequence
$\{x_j\}$ in $G$ such that $\{g_{x_j}\}$ is a frame. A reasonable
class of functions to work with are therefore reproducing kernel
Hilbert spaces. A classical reference for reproducing kernel
Hilbert spaces is \cite{Aronszajn1950}. A Hilbert space $\cH$ of
functions on $G$ is called a reproducing kernel Hilbert space if point
evaluation is continuous, i.e. if for every $x\in G$ there is a
constant $C_x$ such that for all $f\in \cH$
\begin{equation*}
  |f(x)| \leq C_x \| f\|_{\cH}.
\end{equation*}

Here are the main facts about closed reproducing kernel subspaces of
$L^2(G)$ which is all what we will need here:
\begin{proposition}\label{pr-repKerHsp}
  If $\cH$ is a closed and left invariant reproducing kernel subspace
  of $L^2(G)$ then
  \begin{enumerate}
  \item There is a $\phi\in\cH$ such that $f=f*\phi$ for all $f\in
    \cH$.
  \item The functions in $\cH$ are continuous.
  \item The kernel $\phi$ satisfies $\overline{\phi(x^{-1})} =
    \phi(x)$ so $f(x)= f*\phi (x)=(f,\ell({x})\phi).$
  \item The mapping $f\mapsto f*\phi$ is a continuous projection from
    $L^2(G)$ onto $\cH$. In particular $\cH =\{f\in L^2(G)\mid f*\phi
    =f\}$.
  \end{enumerate}
\end{proposition}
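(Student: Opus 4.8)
The plan is to exploit the fact that a closed left-invariant subspace $\cH \subseteq L^2(G)$ on which point evaluation is continuous carries, for each $x \in G$, a reproducing vector $g_x \in \cH$ with $f(x) = (f, g_x)$. First I would produce the kernel $\phi$: set $\phi := g_e$, the reproducing vector at the identity. Using left invariance of $\cH$ together with the unitarity of $\ell(a)$ on $L^2(G)$, one checks that $(\ell(a) g_x, f) = \overline{f(ax)} \cdot (\text{conjugate conventions})$, so that $g_{ax} = \ell(a) g_x$, and in particular $g_x = \ell(x)\phi$ for all $x$. This immediately gives $f(x) = (f, \ell(x)\phi)$, which unwinds to the convolution identity: $(f, \ell(x)\phi) = \int_G f(y)\,\overline{\phi(x^{-1}y)}\,d\mu_G(y)$, so we want $\overline{\phi(x^{-1}y)} = \phi(y^{-1}x)$, i.e. the symmetry $\overline{\phi(z^{-1})} = \phi(z)$ asserted in (c). This symmetry is forced by evaluating the reproducing property of $\phi$ against $\phi$ itself and against its left translates: $\phi(x) = (\phi, \ell(x)\phi)$ and $\overline{\phi(x^{-1})} = \overline{(\phi, \ell(x^{-1})\phi)} = (\ell(x^{-1})\phi, \phi) = (\phi, \ell(x)\phi) = \phi(x)$, using unitarity of $\ell(x^{-1})$. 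Once the symmetry holds, $f(x) = f*\phi(x)$ for all $f \in \cH$ follows by the computation just indicated, proving (a) and (c) together.

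For (b), continuity of elements of $\cH$, I would argue that $x \mapsto g_x = \ell(x)\phi$ is continuous from $G$ into $\cH$ in norm, since left translation is strongly continuous on $L^2(G)$; then $f(x) = (f, g_x)$ is a composition of this continuous $\cH$-valued map with the continuous linear functional $(f, \cdot)$, hence continuous. (One should note $f$ is a genuine function here, not merely an equivalence class, because the reproducing identity $f = f*\phi$ pins down a canonical continuous representative.)

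For (d), the map $P f := f * \phi$ is bounded $L^2(G) \to L^2(G)$ because $\phi \in L^2(G) \subseteq$ acts by convolution — more carefully, one uses that $f*\phi(x) = (f, \ell(x)\phi)$ makes sense for all $f \in L^2(G)$ and that $\|Pf\|_2 \le \|f\|_2$ follows either from the fact that $P$ is the orthogonal projection onto $\cH$ or directly: $P$ restricted to $\cH$ is the identity by (a), $P$ annihilates $\cH^\perp$ since for $f \perp \cH$ we have $f*\phi(x) = (f, \ell(x)\phi) = 0$ as $\ell(x)\phi \in \cH$, and any $g \in L^2(G)$ decomposes as $g = g_1 + g_2$ with $g_1 \in \cH$, $g_2 \in \cH^\perp$, giving $Pg = g_1 \in \cH$. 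Hence $P$ is exactly the orthogonal projection onto $\cH$, which is bounded, idempotent, and has range $\cH$; the characterization $\cH = \{f \in L^2(G) : f*\phi = f\}$ is then immediate since $f*\phi = f$ says precisely $Pf = f$, i.e. $f \in \operatorname{ran} P = \cH$.

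The main obstacle is the bookkeeping around conjugates and the direction of translation in the convolution: getting $g_x = \ell(x)\phi$ with the correct placement of inverses, and then verifying that the symmetry $\overline{\phi(z^{-1})} = \phi(z)$ is exactly what converts $(f, \ell(x)\phi)$ into $f*\phi(x)$ as defined by $f*\phi(x) = \int f(y)\phi(y^{-1}x)\,d\mu_G(y)$. Everything else is soft functional analysis (Riesz representation, strong continuity of translation, orthogonal projections); the only genuine content is that left invariance of $\cH$ plus unitarity of left translation forces the reproducing vectors to be translates of a single one and forces that one to be symmetric.
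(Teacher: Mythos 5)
Your proposal is correct and follows essentially the same route as the paper: Riesz representation gives $g_x$, left invariance plus unitarity of $\ell$ shows $g_x=\ell(x)g_e$ and forces the symmetry $\overline{\phi(z^{-1})}=\phi(z)$, which turns $f(x)=(f,\ell(x)\phi)$ into $f=f*\phi$, and the projection characterization follows by checking that $f\mapsto f*\phi$ is the identity on $\cH$ and annihilates $\cH^\perp$. The only cosmetic difference is that the paper first derives $g(x^{-1}y)=\overline{g(y^{-1}x)}$ from $(g_x,g_y)$ and then sets $\phi(x)=\overline{g(x^{-1})}$, whereas you take $\phi=g_e$ directly and get the same symmetry from unitarity; these are the same computation.
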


\begin{proof} Here are the main ideas of the proof.  By Riesz'
  representation theorem there is a $g_x\in H$ such that
  \begin{equation*}
    f(x) = \int f(y)\overline{g_x(y)}\,dy\, .
  \end{equation*}
  Let $g (x):= g_e(x)$.  The left invariance of $\cH$ ensures that
  \begin{equation*}
    f(x)=[\ell (x^{-1})f](e)    = \int f(xy)\overline{g_e(y)}\,dy
    = \int f(y)\overline{g (x^{-1}y)}\,dy =(f,\ell (x)g)\, .
  \end{equation*}
  Hence $g_x(y)=g(x^{-1}y)$. We also have
  \[g(x^{-1}y)=(g_x,g_y)=\overline{(g_y,g_x)}=\overline{g(y^{-1}x)}\,
  .\]
  Thus, if we set $\phi(x)=\overline{g (x^{-1})}$, which agrees
  with $g^*$ in case $G$ is unimodular, we get $f =f*\phi$, which in
  particular implies that $\cH\subseteq C (G)$ as claimed.

  Assume that $f*\phi =f$ and that $f\perp \cH$. Then $f(x)=(f,g_x)=0$
  as $g_x\in\cH$. Hence $f=0$ and $\cH =L^2(G)*\phi =\{f\in L^2(G)\mid
  f*\phi = f\}$.
\end{proof}
\begin{remark}
  It should be noted that several functions $\phi\in L^2(G)$ could
  satisfy $f=f*\phi$ for $f\in \cH$. Just take an arbitrary function
  $\eta$ such that $\overline{\eta^\vee}\in \cH^\perp$. Then $f*(\phi
  +\eta)=f*\phi $. The restriction that $\phi\in \cH$ ensures
  uniqueness of $\phi$.  Example could be sinc functions for spaces of
  larger bandwidth than $\cH$.
\end{remark}

The sampling theory of Feichtinger, Gr\"ochenig and F\"uhr, see the introduction for referecnes,
builds
on estimation of the variation of a function under small right
translations.  The local oscillations were therefore introduced as
follows: For a compact neighbourhood $U$ of the identity define
\begin{equation*}
  \osc_U(f) = \sup_{u\in U} |f(x)-f(xu^{-1})|
\end{equation*}

Before stating the next result we need to introduce a reasonable
collection of points at which to sample: For a compact neighbourhood
$U$ of the identity, the points $x_i$ are called $U$-relatively
separated if the $x_iU$ cover $G$ and there is an $N$ such that each
$x\in G$ belongs to at most $N$ of the $x_iU$'s.

\begin{lemma}
  Let $\cH$ be a reproducing kernel subspace $L^2(G)$ with reproducing
  convolution kernel $\phi$.  Assume that for any compact
  neighbourhood $U$ of the identity there is a constant $C_U$ such
  that for any $f\in \cH$ the estimate $\| \osc_U(f)\|_{L^2} \leq C_U
  \| f\|_{\cH}$ holds.  If we can choose $U$ such that $C_U <1$, then
  for any $U$-relatively separated points $\{ x_i\}$ the norms $\| \{
  f(x_i)\}\|_{\ell^2}$ and $\| f\|_{L^2}$ are equivalent, and
  $\ell({x_i})\phi$ forms a frame for $\cH$.
\end{lemma}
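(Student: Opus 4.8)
The plan is to show that the oscillation hypothesis lets us compare the ``sampled'' frame $\{\ell(x_i)\phi\}$ with the continuous reproducing formula $f = f*\phi = (f,\ell(x)\phi)$, using a partition of unity subordinate to the cover $\{x_iU\}$. First I would fix a compact neighbourhood $U$ of the identity with $C_U<1$ and $U$-relatively separated points $\{x_i\}$. Choose a bounded partition of unity $\{\psi_i\}$ with $0\le\psi_i\le 1$, $\supp\psi_i\subset x_iU$, and $\sum_i\psi_i=1$; such a family exists because the $x_iU$ form a uniformly locally finite cover (at most $N$ overlaps). The idea is to approximate $f$ by the ``quasi-interpolant'' $Tf := \sum_i f(x_i)\,\psi_i * \phi$, or more directly to estimate $\sum_i |f(x_i)|^2$ against $\|f\|_{L^2}^2$ and conversely.

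The key step is the pointwise oscillation estimate. For $x\in x_iU$, write $x = x_i u^{-1}$ for some $u\in U$, so $|f(x)-f(x_i)| \le \osc_U(f)(x)$ (up to replacing $U$ by $U\cup U^{-1}$, which is harmless). Then
\begin{equation*}
  \Big| f(x) - \sum_i f(x_i)\psi_i(x) \Big|
  = \Big| \sum_i (f(x)-f(x_i))\psi_i(x) \Big|
  \le \sum_i \psi_i(x)\,\osc_U(f)(x) = \osc_U(f)(x),
\end{equation*}
using $\sum_i\psi_i=1$ and the fact that $\psi_i(x)\ne 0$ forces $x\in x_iU$. Hence, setting $g := \sum_i f(x_i)\psi_i \in L^2(G)$ (the sum is locally finite and $\|g\|_{L^2}$ is controlled), we get $\|f - g\|_{L^2} = \|\osc_U(f)\|_{L^2}$-type bound $\le C_U\|f\|_{\cH}$. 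Now convolve on the right with $\phi$: since $f = f*\phi$ and convolution by $\phi$ is the bounded projection onto $\cH$ (Proposition \ref{pr-repKerHsp}), and since $C_U<1$, the operator $f\mapsto g*\phi = \sum_i f(x_i)\,\psi_i*\phi$ differs from the identity on $\cH$ by an operator of norm $\le C_U<1$, hence is invertible on $\cH$. This simultaneously yields the reconstruction formula and, reading off the definition of $g*\phi$ as a superposition of the $\psi_i*\phi$ with coefficients $f(x_i)$, shows that $\{\ell(x_i)\phi\}$ is a frame: the lower frame bound comes from invertibility, and the upper frame bound from $\sum_i|f(x_i)|^2 \le 2\|f-g\text{-type}\|^2 + \dots \lesssim \|f\|_{L^2}^2$, which in turn follows from the separation of the $x_i$ together with continuity of $f$ and a local mean-value comparison $|f(x_i)|^2 \lesssim \int_{x_iU}(|f(x)|^2 + \osc_U(f)(x)^2)\,d\mu_G(x)$.

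I expect the main obstacle to be the bookkeeping that turns the $L^2$-closeness $\|f-g*\phi\|_{L^2}\le C_U\|f\|_{\cH}$ into genuine two-sided frame inequalities for $\{\ell(x_i)\phi\}$ rather than just an invertible reconstruction operator; concretely, one must pass between the coefficient sequence $\{f(x_i)\}\in\ell^2$ and the function $g=\sum f(x_i)\psi_i\in L^2$, which requires the uniform local finiteness of the cover (the constant $N$) and a uniform lower bound on $\mu_G(x_iU\cap(\text{the region where }\psi_i\text{ is bounded below}))$ — here compactness of $U$ and homogeneity of Haar measure under left translation are what make these constants uniform in $i$. Everything else (existence of the partition of unity, boundedness of convolution by $\phi$, density of $\cH^\infty$) is either standard or already recorded in the excerpt, so the argument is essentially the Feichtinger--Gröchenig perturbation scheme applied to the reproducing convolution identity.
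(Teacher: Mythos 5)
Your proof is correct and rests on the same two estimates as the paper's: the pointwise comparison $|f(x)-\sum_i f(x_i)\psi_i(x)|\le\osc_U(f)(x)$ obtained from a partition of unity subordinate to the cover $\{x_iU\}$, and the overlap bound $N$ together with $\mu_G(x_iU)=\mu_G(U)$ to pass between the sequence $\{f(x_i)\}\in\ell^2$ and the function $\sum_i f(x_i)\psi_i\in L^2(G)$. The only real difference is organizational, in the lower inequality: the paper gets $(1-C_U)\|f\|_{L^2}\le N|U|\,\|\{f(x_i)\}\|_{\ell^2}$ directly from the triangle inequality $\|f\|_{L^2}\le\|f-\sum_i f(x_i)\psi_i\|_{L^2}+\|\sum_i f(x_i)\psi_i\|_{L^2}$, with no convolution by $\phi$ at all, whereas you first convolve with $\phi$ and invoke Neumann-series invertibility of $Tf=\sum_i f(x_i)\psi_i*\phi$. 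That operator is exactly the one the paper introduces only \emph{after} the lemma (where it pays off by giving a reconstruction whose convergence rate is governed by $C_U$); for the bare norm equivalence it is a detour, since you additionally need that $f\mapsto f*\phi$ has norm one (it is the orthogonal projection onto $\cH$) and you must still do the same $\ell^2$-to-$L^2$ bookkeeping to bound $\|Tf\|$ by $\|\{f(x_i)\}\|_{\ell^2}$. Your upper bound, via $|f(x_i)|^2\,|U|\le 2\int_{x_iU}\bigl(|f|^2+\osc_U(f)^2\bigr)\,d\mu_G$ and the overlap count, matches the paper's. One small simplification: since $\osc_U(f)(x)=\sup_{u\in U}|f(x)-f(xu^{-1})|$, a point $x=x_iu\in x_iU$ satisfies $f(x_i)=f(xu^{-1})$ with $u\in U$, so $|f(x)-f(x_i)|\le\osc_U(f)(x)$ holds as stated and no replacement of $U$ by $U\cup U^{-1}$ is needed (though doing so would be harmless, as the hypothesis applies to every compact neighbourhood of the identity).
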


\begin{proof}
  \begin{align*}
    \| \{f(x_i) \}\|^2_{\ell^2}
    &= |U|^{-1} \Big\| \sum_i |f(x_i)|^2 \I_{x_iU} \Big\|_{L^1} \\
    &\leq |U|^{-1} \Big\| \sum_i |f(x_i)|\I_{x_iU} \Big\|^2_{L^2} \\
    &\leq |U|^{-1} \Big(\Big\| \sum_i |f(x_i)-f|\I_{x_iU} \Big\|_{L^2}
    + \Big\| \sum_i |f|\I_{x_iU} \Big\|_{L^2}\Big)^2 \\
    &\leq |U|^{-1} \Big( \Big\| \sum_i |\osc_U (f)|\I_{x_iU}
    \Big\|_{L^2}
    + \Big\| \sum_i |f|\I_{x_iU} \Big\|_{L^2}\Big)^2 \\
    &\leq |U|^{-1} N^2 ( \|\osc_U(f) \|_{L^2} + \|f \|_{L^2} )^2 \\
    &\leq |U|^{-1} N^2 (1+C_U)^2 \| f\|^2_{L^2}.
  \end{align*}
  Here $N$ is the maximal number of overlaps between the $x_iU$'s.  To
  get the other inequality we let $\psi_i$ be a bounded partition of
  unity such that $0\leq \psi_i\leq \I_{x_i}U$ and $\sum_i \psi_i =1$.
  Then,
  \begin{align*}
    \| f\|_{L^2} &\leq \Big\| f - \sum_i f(x_i)\psi_i \Big\|_{L^2}
    + \Big\| \sum_i f(x_i)\psi_i \Big\|_{L^2} \\
    &\leq \Big\| \sum_i \osc_U(f) \psi_i \Big\|_{L^2}
    + \Big\| \sum_i |f(x_i)|\I_{x_iU} \Big\|_{L^2} \\
    &\leq \| \osc_U(f) \|_{L^2}
    + N|U| \| f(x_i) \|_{\ell^2} \\
    &\leq C_U \| f \|_{L^2}
    + N |U| \| f(x_i) \|_{\ell^2} \\
  \end{align*}
  If $C_U < 1$ then we get
  \begin{equation*}
    (1-C_U) \| f\|_{L^2}
    \leq N |U| \| f(x_i) \|_{\ell^2}.
  \end{equation*}
  This concludes the proof.
\end{proof}

\begin{remark}
  From the proof of the lemma follows that the the norm equivalence
  becomes
  \begin{equation} \label{norm-eq-samples} \left( \frac{1-C_U}{|U| N}
    \right)^2 \| f\|_{L^2}^2 \leq \| \{ f(x_i)\} \|_{\ell^2}^2 \leq
    \left( N \frac{1+C_U}{|U|} \right)^2 \| f\|_{L^2}^2,
  \end{equation}
  and thus the frame constants $A$ and $B$ can be chosen to be
  \begin{equation*}
    A = \left( \frac{1-C_U}{|U| N} \right)^2
    \qquad\text{and}\qquad
    B= \left( N \frac{1+C_U}{|U|} \right)^2.
  \end{equation*}
  It follows that the rate of convergence for the Neumann series
  \eqref{eq-neumann-series} can be estimated by
  \begin{equation*}
    \frac{B-A}{B+A}
    = \frac{N^2(1+C_U)^2 - (1-C_U)^2/N^2
    }{N^2(1+C_U)^2 + (1-C_U)^2/N^2}
    \to \frac{N^4 - 1}{N^4 + 1} \quad{\text{as $C_U\to 0$}}.
  \end{equation*}
  This shows that as the sampling points $x_i$ are chosen closer ($U$
  gets smaller) the rate of convergence can be very slow (assuming
  that we can choose the overlaps of the $x_iU$'s bounded by a certain
  $N$ even if $U$ gets smaller).
  We therefore have very little control of the rate of
  convergence in this case.
\end{remark}

To obtain operators with faster decaying Neumann series, Feichtinger
and Gr\"ochenig introduced new sampling operators.  An example is the
sampling operator $T:\cH \to \cH$ defined as
\begin{equation*}
  Tf = \sum_i f(x_i)\psi_i*\phi.
\end{equation*}
Using oscillations it is possible to estimate the norm of $I-T$ by
$C_U$:
\begin{equation*}
  \| f-Tf \|_{L^2}
  = \Big\| \Big( \sum_{i} |f-f(x_i)| \psi_i \Big)*\phi \Big\|_{L^2}
  \leq \| \osc_U f\|_{L^2}
  \leq C_U \| f\|_{L^2}
\end{equation*}
Thus $T$ is invertible on $L^2_\phi$ if $C_U <1$ with rate of
convergence of Neumann series governed directly by $C_U$.  By
increasing the rate of sampling (decreasing $U$ and thereby $C_U$)
fewer iterations are necessary in order to obtain good
approximation. This was not the case for the frame inversion above.

\section{Oscillation estimates on Lie groups}\label{S:3}
\noindent
In this section we will show how oscillation estimates can be obtained
for functions on Lie groups.

First we set up the notation. As before we let $G$ be a Lie group with
Lie algebra $\mathfrak{g}$. Fix a basis $\{
X_i\}_{i=1}^{\mathrm{dim}(G)}$ for $\mathfrak{g}$.  Denote by
$U_\epsilon$ the set
\begin{equation*}
  U_\epsilon := \left\{ 
    \exp(t_1X_1)\cdots\exp(t_nX_n)
    \Mid -\epsilon \leq t_k \leq \epsilon ,1\leq k\leq n
  \right\}.
\end{equation*}

\begin{remark} Note that $U_\epsilon$ depends on the choice of basis as well as the ordering of the
vectors. It would therefore be more natural to use sets of the form
$V_\epsilon:=\exp \{X\in\fg\mid \|X\|\leq \epsilon\}$ or even
$W_\epsilon:=\exp \{X\in\fs\mid \|X\|\leq \epsilon\}\exp \{X\in\fk\mid \|X\|\leq \epsilon\}$. Both of those sets are invariant under
conjugation by elements in $K$. The reason to use $U_\epsilon$ as defined above is, that this is the definition that works best for
the proofs! But it should be noted that $V_\epsilon, W_\epsilon \subseteq U_\epsilon$. Hence the local oscillation using either $V_\epsilon$
or $W_\epsilon $ is controlled by the local oscillation using $U_\epsilon$.
\end{remark}

Set
$$\osc_\epsilon (f)=\osc_{U_\epsilon}(f).$$
By $\delta$ we denote an $n$-tuple $\delta =
(\delta_1,\dots,\delta_n)$ with $\delta_i \in \{0,1 \}$. The length
$|\delta|$ of $\delta$ is the number of non-zero entries $| \delta | =
\delta_1+\dots +\delta_n$.  Further, define the function
$\tau_\delta:(-\epsilon,\epsilon)^n\to G$ by

\begin{equation*}
  \tau_\delta (t_1,\dots,t_n)
  = \exp({\delta_1 t_nX_1}) \cdots \exp({\delta_n t_1X_n}).
\end{equation*}

\begin{lemma}\label{lem:5}
  If $f$ is right differentiable of order $n=\mathrm{dim}(G)$ then
  there is a constant $C_\epsilon $ such that
  \begin{equation*}
    \osc_\epsilon(f)(x) \leq C_\epsilon
    \sum_{1\leq |\alpha|\leq n}
    \sum_{|\delta|=|\alpha|}
    \underbrace{\int_{-\epsilon}^\epsilon
      \cdots \int_{-\epsilon}^\epsilon}_{\text{$|\delta|$ integrals}}
    |D^\alpha f(x\tau_\delta(t_1,\dots,t_n)^{-1})|
    (dt_1)^{\delta_1} \cdots (dt_n)^{\delta_n}.
  \end{equation*}
  For $\epsilon' \leq \epsilon$ we have $C_{\epsilon'} \leq
  C_\epsilon$.
\end{lemma}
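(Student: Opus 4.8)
The plan is to prove the oscillation estimate by writing the difference $f(x) - f(xu^{-1})$ for $u = \exp(t_1X_1)\cdots\exp(t_nX_n) \in U_\epsilon$ as a telescoping sum, and then expressing each telescoped term through the fundamental theorem of calculus applied to the right-differentiable function $f$.

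The plan is to bound $|f(x)-f(xu^{-1})|$ for a single fixed $u=\exp(t_1X_1)\cdots\exp(t_nX_n)\in U_\epsilon$; since the right-hand side of the asserted inequality does not depend on $u$, such a bound passes automatically to the supremum defining $\osc_\epsilon(f)(x)$. Writing $u^{-1}=\exp(-t_nX_n)\cdots\exp(-t_1X_1)$, I would expand $f(xu^{-1})$ by processing the exponential factors one at a time, starting from the innermost one $\exp(-t_1X_1)$. When a factor $\exp(-t_jX_j)$ sits in a word $y\,\exp(-t_jX_j)\,z$, with $z$ the product of the already-processed factors, I apply the fundamental theorem of calculus in $t_j$,
\[
  f(y\,\exp(-t_jX_j)\,z)=f(yz)+\int_0^{t_j}\frac{d}{ds}\,f(y\,\exp(-sX_j)\,z)\,ds ,
\]
and, using $\exp(-sX_j)z=z\exp(-s\,\Ad(z^{-1})X_j)$ together with linearity of $X\mapsto D(X)$, rewrite the integrand as $-\sum_l c_l^{(j)}(s)\,[D(X_l)f](y\,\exp(-sX_j)\,z)$, where the $c_l^{(j)}$ are the coordinates of $\Ad(z^{-1})X_j$ in the basis $X_1,\dots,X_n$ and are smooth, hence bounded on $[-\epsilon,\epsilon]$.

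Iterating this split over all $n$ factors expresses $f(xu^{-1})$ as a sum indexed by the subset $S\subseteq\{1,\dots,n\}$ of factors that are kept (turned into an integration variable rather than deleted): the term with $S=\emptyset$ is exactly $f(x)$, which cancels the leading $f(x)$ in $f(x)-f(xu^{-1})$, and for $S\neq\emptyset$ the term is an $|S|$-fold iterated integral over variables in $(-\epsilon,\epsilon)$ of a product of the bounded $\Ad$-coordinate functions times $[D^\alpha f]$ evaluated at $x\prod_{j\in S}\exp(-s_jX_j)$, the product taken in decreasing order of index. Each kept factor contributes exactly one derivative $D(X_l)$ with $l$ free to range over $\{1,\dots,n\}$, so the accumulated operator $D^\alpha$ runs over all multi-indices $\alpha$ of length $|S|$; setting $\delta=\mathbf{1}_S$ gives $|\alpha|=|\delta|$, and $x\prod_{j\in S}\exp(-s_jX_j)$ is precisely $x\,\tau_\delta(\dots)^{-1}$ after renaming the integration variables. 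Bounding each $|c_l^{(j)}(s)|$ over $[-\epsilon,\epsilon]$, enlarging each $\int_0^{t_j}$ to $\int_{-\epsilon}^{\epsilon}$ (legitimate as $|t_j|\le\epsilon$), and absorbing the finitely many resulting terms into one constant $C_\epsilon$ yields exactly the claimed estimate. The monotonicity $C_{\epsilon'}\le C_\epsilon$ for $\epsilon'\le\epsilon$ then follows, since $C_\epsilon$ may be taken as the fixed number of terms times the supremum of the relevant $\Ad$-coordinate functions over $[-\epsilon,\epsilon]^n$, which is non-decreasing in $\epsilon$ (alternatively it is immediate from $U_{\epsilon'}\subseteq U_\epsilon$).

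The main obstacle is the bookkeeping of the iteration rather than any single estimate. One must commit to processing the innermost unprocessed factor first, so that every differentiated factor lies to the left of an already-finalized block $z$ and its derivative is a plain right-derivative twisted only by the fixed element $\Ad(z^{-1})$; and one must simultaneously track which factors survive as integration variables (this records $\delta$), the left-to-right order in which the $D(X_l)$'s accumulate (this builds $D^\alpha$ with the convention of the paper), and the fact that each newly introduced integration variable stays in $[-\epsilon,\epsilon]$. A secondary point to check is that ``right-differentiable of order $n$'' is exactly the regularity needed to legitimize the $n$ successive applications of the fundamental theorem of calculus and the interchange of differentiation with the $\Ad$-conjugation.
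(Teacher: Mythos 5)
Your argument is correct and rests on the same two ingredients as the paper's proof: the fundamental theorem of calculus applied along the one-parameter factors $\exp(-t_jX_j)$, and the conjugation identity $\exp(-sX_j)z=z\exp(-s\,\Ad(z^{-1})X_j)$ used to re-express the twisted derivative in the fixed basis with coefficients bounded on $[-\epsilon,\epsilon]^n$, so that all derivatives of order up to $n$ are evaluated only at points $x\tau_\delta(\dots)^{-1}$ depending on the integration variables. The only difference is bookkeeping: the paper telescopes $f(x)-f(xy^{-1})$ into first-order differences and then removes the leftover dependence on the parameters of $y$ by further applications of the same two steps (carried out explicitly in dimension two, with the general case referred to the literature), whereas you organize the identical iteration as a single recursive keep-or-delete expansion over the factors, which yields the same final estimate, the same $(\delta,\alpha)$ structure, and the same monotone choice of $C_\epsilon$ as a supremum of the $\Ad$-coefficients over a cube growing with $\epsilon$.
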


\begin{proof}
  We refer to \cite{Christensen2011} for a full proof. Instead we
  restrict ourselves to a proof in 2 dimension that easily carries
  over to arbitrary dimensions. We will sometimes write $e^X$ instead of $\exp X$.

  For $y\in U_\epsilon$ there are $s_1,s_2\in [-\epsilon,\epsilon]$ such that $y^{-1} = e^{-s_2X_2} e^{-s_1X_1}$.
  Hence
  \begin{align}
    |f(x)-f(xy^{-1})|
    &= |f(x)-f(xe^{-s_2X_2} e^{-s_1X_1})| \notag\\
    &\leq |f(x)-f(xe^{-s_2X_2})|
    + |f(xe^{-s_2X_2})-f(xe^{-s_2X_2} e^{-s_1X_1})| \notag\\
    &=
    \left| \int_{0}^{s_2} \frac{d}{dt_2}
      f(x e^{-t_2X_2}) \,dt_2  \right|
    + \left| \int_{0}^{s_1} \frac{d}{dt_1}
      f(x e^{-s_2X_2}e^{-t_1X_1}) \,dt_1  \right| \notag\\
    &\leq \int_{-\epsilon}^{\epsilon} \left|
      D(X_2)f(x e^{-t_2X_2}) \right| \,dt_2   + \int_{-\epsilon}^{\epsilon} \left|
      D(X_1)f(x e^{-s_2X_2}e^{-t_1X_1}) \right| \,dt_1.\label{eq:part1}
  \end{align}
  Since
  $$
  e^{-s_2X_2} e^{-t_1X_1}
  = e^{-t_1X_1}e^{-s_2X(t_1)}\quad \text{ with } \quad X(t)=\mathrm{Ad}(\exp (t X_1))X_2
  $$
  the term
  $|D(X_1)f(x e^{-s_2 X_2}e^{-t_1X_1})|$ can be estimated by
  \begin{align}
    | D(X_1) &f(x e^{- s_2X_2} e^{-t_1X_1})| \notag\\
    &= | D(X_1) f(x e^{-t_1X_1} e^{-s_2 X(t_1)})| \notag\\
    &\leq | D(X_1) f(x e^{- t_1X_1}e^{-s_2X (t_1)})
    - D(X_1) f(x e^{-t_1X_1} )| +  |D(X_1) f(x e^{-t_1X_1} )| \notag\\
    &=
    \left| \int_{0}^{s_1}
    \frac{d}{dt_2} D(X_1) f(x e^{-t_1X_1} e^{-t_2 X (t_1)}) \,dt_2 \right|
    +  |D(X_1) f(x e^{-t_1X_1})| \notag\\
    &=
    \left| \int_{0}^{s_1}
    D(X(t_1))D (X_1)
    f(x e^{- t_1X_1} e^{- t_2 X(t_1)}) \,dt_2 \right|
    +  |X_1 f(x e^{- t_1X_1})| \notag\\
    &\leq
    C_{\epsilon}
    \int_{-\epsilon}^{\epsilon}
    \left|D(X_2) D(X_1) f(x e^{-t_2 X_2} e^{-t_1X_1})\right|
     + \left|D(X_1) D(X_1) f(x e^{-t_2X_2} e^{-t_1X_1})\right|
    \,dt_2 \notag\\
    &\qquad
    +  |D(X_1) f(x e^{- t_1X_1})|. \label{eq:part2}
  \end{align}
  The last inequality follows since $D(X (t_2))= a(t_1)D(X_1)+b(t_1)D(X_2)$
  is a differential operator with coefficients $a$ and $b$ depending continuously, in fact analytically, on all variables.
Together \eqref{eq:part1} and \eqref{eq:part2} provide the desired
  estimate.
\end{proof}

Since right translation is continuous on $L^2(G)$ and $\sup_{u\in U}
\| r_u f\|_{L^2} \leq C_U \| f\|_{L^2}$ for compact $U$ \cite[Theorem
3.29]{Rudin1991} gives
\begin{align*}
  \| \osc_\epsilon(f)\|_{L^2} &\leq \sum_{1\leq |\alpha|\leq n}
  \sum_{|\delta|=|\alpha|} \underbrace{\int_{-\epsilon}^\epsilon
    \cdots \int_{-\epsilon}^\epsilon}_{\text{$|\delta|$ integrals}} \|
  r_{\tau_\delta(t_1,\dots,t_n)^{-1}} D^\alpha f\|_{L^2}
  (dt_1)^{\delta_1} \cdots (dt_n)^{\delta_n} \\
  &\leq C_{U_\epsilon} \sum_{1\leq |\alpha|\leq n}
  \sum_{|\delta|=|\alpha|} \underbrace{\int_{-\epsilon}^\epsilon
    \cdots \int_{-\epsilon}^\epsilon}_{\text{$|\delta|$ integrals}} \|
  D^\alpha f\|_{L^2}
  (dt_1)^{\delta_1} \cdots (dt_n)^{\delta_n} \\
  &\leq C_{U_\epsilon} \sum_{1\leq |\alpha|\leq n} {n \choose
    |\alpha|} \epsilon^{|\alpha|} \| D^\alpha f\|_{L^2}
\end{align*}
To sum up we get
\begin{theorem}
  If $D^\alpha f\in L^2(G)$ for all $|\alpha|\leq n$, then
  \begin{equation*}
    \| \osc_\epsilon(f) \|_{L^2}
    \leq C_\epsilon \sum_{1\leq |\alpha|\leq n} \| D^\alpha f\|_{L^2}
  \end{equation*}
  where $C_\epsilon \to 0$ as $\epsilon \to 0$.
\end{theorem}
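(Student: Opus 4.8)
The plan is to integrate the pointwise oscillation estimate of Lemma~\ref{lem:5} against the $x$-variable in $L^2(G)$ and to exploit that the small right translations occurring in its right-hand side act as uniformly bounded operators on $L^2(G)$.

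First I would apply Lemma~\ref{lem:5} to $f$ and take the $L^2(G)$-norm in $x$ of both sides. The right-hand side is a finite sum, over $\alpha$ with $1\le|\alpha|\le n$ and over $\delta$ with $|\delta|=|\alpha|$, of iterated integrals in the $|\delta|$ variables $t_i$ with $\delta_i=1$ of the nonnegative quantity $|D^\alpha f(x\,\tau_\delta(t)^{-1})|$. Since the integrand is nonnegative, Minkowski's integral inequality lets me move the $L^2_x$-norm inside each such integral, leaving for each fixed $t$ the term $\|\rho(\tau_\delta(t)^{-1})D^\alpha f\|_{L^2}$.

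Next I would control these translates uniformly. As $|t_i|\le\epsilon$ for all $i$, the element $\tau_\delta(t)$ lies in $U_\epsilon$ (set the parameters with $\delta_i=0$ equal to zero in the definition of $U_\epsilon$), hence $\tau_\delta(t)^{-1}\in U_\epsilon^{-1}$, a compact set. Since each $D^\alpha f\in L^2(G)$ and the right regular representation is strongly continuous on $L^2(G)$, the function $u\mapsto\|\rho(u)D^\alpha f\|_{L^2}$ is continuous, hence bounded on $U_\epsilon^{-1}$; quantitatively this is the estimate $\sup_{u\in U}\|\rho(u)g\|_{L^2}\le C_U\|g\|_{L^2}$ (\cite[Theorem~3.29]{Rudin1991}) applied with $g=D^\alpha f$ and $U=U_\epsilon^{-1}$. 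Pulling this bound out, each iterated $t$-integral contributes a factor at most $(2\epsilon)^{|\delta|}=(2\epsilon)^{|\alpha|}$, and there are $\binom{n}{|\alpha|}$ tuples $\delta$ with $|\delta|=|\alpha|$, so
\[
\|\osc_\epsilon(f)\|_{L^2}\ \le\ C_\epsilon\,C_{U_\epsilon}\sum_{1\le|\alpha|\le n}\binom{n}{|\alpha|}(2\epsilon)^{|\alpha|}\,\|D^\alpha f\|_{L^2}.
\]

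Finally, since every exponent $|\alpha|$ here is at least $1$, each coefficient $\binom{n}{|\alpha|}(2\epsilon)^{|\alpha|}$ tends to $0$ as $\epsilon\to0$; moreover $C_\epsilon$ stays bounded as $\epsilon\to0$ (it is nondecreasing in $\epsilon$ by the last assertion of Lemma~\ref{lem:5}) and $C_{U_\epsilon}$ stays bounded (it tends to $1$). Absorbing all $\epsilon$-dependent factors into one constant, still written $C_\epsilon$, yields $\|\osc_\epsilon(f)\|_{L^2}\le C_\epsilon\sum_{1\le|\alpha|\le n}\|D^\alpha f\|_{L^2}$ with $C_\epsilon\to0$. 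The only step needing genuine care is the interchange of the $L^2_x$-norm with the $t$-integrals together with the uniform bound on the translates, i.e.\ the Bochner-measurability and uniform $L^2$-boundedness of the map $t\mapsto\rho(\tau_\delta(t)^{-1})D^\alpha f$ on the compact parameter box; both follow from strong continuity of the right regular representation. Everything else is counting multi-indices $\delta$ and bookkeeping of constants.
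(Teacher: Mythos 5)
Your argument is correct and is essentially the paper's own proof: apply Lemma~\ref{lem:5}, take $L^2$-norms and move them inside the $t$-integrals via Minkowski's inequality, bound the right translates uniformly on the compact set $U_\epsilon$ using \cite[Theorem 3.29]{Rudin1991}, and observe that each surviving coefficient carries a factor $\epsilon^{|\alpha|}$ with $|\alpha|\ge 1$, so the total constant tends to $0$. Your slightly more careful bookkeeping (retaining the monotone constant $C_\epsilon$ from Lemma~\ref{lem:5} and the factor $(2\epsilon)^{|\alpha|}$, and noting the measurability needed for Minkowski) only refines, and does not change, the paper's argument.
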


We will need the following fact later when we obtain a Bernstein type
inequality for band-limited functions on a commutative space.  If
$\langle X,Y\rangle$ defines an inner product on $\mathfrak{g}$ and
$X_1,\dots,X_n$ is an orthonormal basis, then the associated Laplace
operator has the form $\Delta_G = D(X_1)^2 + \cdots + D(X_n)^2$. We
have:
\begin{lemma} Let the notation be as above. Then
  \begin{equation*}
    \sum_{1\leq |\alpha|\leq n} \| D^\alpha f\|_{L^2}
    \leq C \| (I-\Delta_G)^{n/2} f\|_{L^2}.
  \end{equation*}
\end{lemma}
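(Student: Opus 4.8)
The plan is to bound each $\| D^\alpha f\|_{L^2}$ for $1\le |\alpha|\le n$ by a constant times $\| (I-\Delta_G)^{n/2} f\|_{L^2}$ and then sum over the finitely many multi-indices. The natural tool is the Fourier transform on $G$, but since $G$ is noncommutative in general, the cleanest route is to work spectrally with the nonnegative self-adjoint operator $I-\Delta_G$ on $L^2(G)$ (recall $-\Delta_G=\sum D(X_i)^2$ acts as a nonnegative operator because $\pi^\infty(X_i)^*=-\pi^\infty(X_i)$ applied to the right regular representation, i.e. $\langle -\Delta_G f,f\rangle=\sum\|D(X_i)f\|^2\ge 0$). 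Write $L=(I-\Delta_G)^{1/2}$, so $L^n=(I-\Delta_G)^{n/2}$ is the operator appearing on the right.

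First I would reduce everything to the elementary inequality
\begin{equation*}
  \| D^\alpha f\|_{L^2} \le C\, \| L^{|\alpha|} f\|_{L^2}\qquad(1\le |\alpha|\le n),
\end{equation*}
after which I am done since $\| L^{|\alpha|} f\|_{L^2}\le \| L^n f\|_{L^2}+\|f\|_{L^2}\le C'\|L^n f\|_{L^2}$ (the last step because $L^n\ge I$, so $I\le L^n$ as operators and hence $\|f\|\le \|L^n f\|$; more simply $\|L^k f\|^2=\langle L^{2k}f,f\rangle\le \langle L^{2n}f,f\rangle^{k/n}\langle f,f\rangle^{1-k/n}$ by the spectral theorem, which is $\le \|L^nf\|^2$ when $\|f\|\le\|L^nf\|$). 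To get the displayed inequality, the key observation is that for $|\alpha|=k$ the operator $D^\alpha$ is a product $D(X_{\alpha(k)})\cdots D(X_{\alpha(1)})$ of $k$ first-order operators, and it suffices to show that each single factor is bounded relative to $L$, i.e. $\| D(X_i) g\|_{L^2}\le \| L g\|_{L^2}$, applied iteratively to $g=D(X_{\alpha(1)})f$, etc. Each such step costs one power of $L$, and after $k$ steps we land on $L^k f=L^{|\alpha|}f$ as desired (one must note the factors are applied to $L^j f$ rather than to $f$, but $L$ is a function of $-\Delta_G$ so it commutes with nothing a priori — see the obstacle below).

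The single-factor estimate $\|D(X_i)g\|_{L^2}\le \|(I-\Delta_G)^{1/2} g\|_{L^2}$ is the heart of the matter and follows by direct computation on smooth compactly supported $g$: using $D(X_i)^*=-D(X_i)$,
\begin{equation*}
  \|D(X_i)g\|_{L^2}^2 = \langle -D(X_i)^2 g, g\rangle \le \sum_{j=1}^n \langle -D(X_j)^2 g,g\rangle = \langle -\Delta_G g, g\rangle \le \langle (I-\Delta_G)g,g\rangle = \|(I-\Delta_G)^{1/2}g\|_{L^2}^2,
\end{equation*}
then extend from $C_c^\infty(G)$ to the domain of $(I-\Delta_G)^{1/2}$ by density (this is exactly where the hypothesis $D^\alpha f\in L^2$ for all $|\alpha|\le n$ is used: it guarantees $f$ lies in the relevant Sobolev-type domain so that $L^n f$ makes sense and the formal manipulations are legitimate).

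The main obstacle is the noncommutativity: when I peel off factors of $D^\alpha$ one at a time, after the first step I need to bound $\|D(X_{\alpha(2)}) D(X_{\alpha(1)}) f\|$ in terms of $\|L^2 f\|$, but the single-factor estimate only gives $\|D(X_{\alpha(2)})(D(X_{\alpha(1)})f)\| \le \|L(D(X_{\alpha(1)})f)\|$, and $L$ does not commute past $D(X_{\alpha(1)})$. The clean fix is to argue entirely at the level of quadratic forms / the spectral calculus of $-\Delta_G$: one shows by induction on $k$ that $\sum_{|\alpha|=k}\|D^\alpha f\|_{L^2}^2 = \langle (-\Delta_G)^k f, f\rangle$ is false in general (commutators again), so instead one uses the standard elliptic estimate that the graph norm of $(-\Delta_G)^{k}$ controls all derivatives of order $\le 2k$ — equivalently, one invokes that $\Delta_G$ is a left-invariant elliptic operator and $(I-\Delta_G)^{n/2}$ has an $L^2$-bounded parametrix composed with $D^\alpha$. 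Concretely, it is enough to verify that $D^\alpha (I-\Delta_G)^{-|\alpha|/2}$ extends to a bounded operator on $L^2(G)$, which one can check by transferring to the representation-theoretic Plancherel picture or, most elementarily, by iterating the commutator identity $[D(X_i),\Delta_G]=\sum_j (D([X_i,X_j])D(X_j)+D(X_j)D([X_i,X_j]))$, a first-order operator times a first-order operator, so each commutator drops the order by at most one and the induction closes. I expect the bookkeeping of these commutator terms to be the only genuinely technical point; everything else is the two-line quadratic-form estimate above together with the spectral-theorem interpolation $\|L^k f\|\le \|L^n f\|$ valid on the domain of $L^n$.
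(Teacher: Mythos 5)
The paper does not prove this lemma from scratch: it observes that the left-hand side is a Sobolev-type norm built from the left-invariant vector fields and the right-hand side is a Bessel-potential norm, and then simply quotes Theorem 4 of Triebel (1986) (with the Bessel norm as in Strichartz) for complete Riemannian manifolds of bounded geometry. You instead set out to reprove this equivalence on $L^2(G)$ by operator theory. The skeleton of your argument is sound: the quadratic-form estimate $\|D(X_i)g\|_{L^2}^2\le\langle (I-\Delta_G)g,g\rangle$ is correct (using unimodularity of $G$, so that $D(X_i)^*=-D(X_i)$), the spectral inequality $\|(I-\Delta_G)^{k/2}f\|_{L^2}\le\|(I-\Delta_G)^{n/2}f\|_{L^2}$ for $k\le n$ is correct, and you correctly identify that everything reduces to the $L^2$-boundedness of $D^\alpha(I-\Delta_G)^{-|\alpha|/2}$.

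But that last claim, which is the entire content of the lemma, is asserted rather than proved, and the mechanism you offer does not cover it. Iterating the identity $[D(X_i),\Delta_G]=\sum_j\bigl(D([X_i,X_j])D(X_j)+D(X_j)D([X_i,X_j])\bigr)$ can indeed close an induction for \emph{even} $|\alpha|$, i.e.\ for integer powers $(I-\Delta_G)^{-m}$, because the commutators have constant coefficients (structure constants) in the left-invariant frame. For \emph{odd} $|\alpha|$, however, $(I-\Delta_G)^{-|\alpha|/2}$ is a nonlocal fractional power to which that identity does not directly apply, and your fallback of peeling off one factor of $D^\alpha$ at a time is blocked by exactly the noncommutativity you yourself flag. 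Nor can the fractional case be dismissed as bookkeeping: $n=\dim G$ may be odd, so the exponent $n/2$ in the statement is itself fractional, and the naive substitute of bounding an odd-order term by $\|(I-\Delta_G)^{j+1}f\|\,\|(I-\Delta_G)^{j}f\|$ fails because Cauchy--Schwarz on the spectral measure gives $\|(I-\Delta_G)^{j+1/2}f\|^2\le\|(I-\Delta_G)^{j+1}f\|\,\|(I-\Delta_G)^{j}f\|$, i.e.\ the wrong direction. To complete your route you need a genuine additional ingredient for half-integer powers --- for instance Stein complex interpolation applied to the family $z\mapsto D^\alpha(I-\Delta_G)^{-z}$, using that the imaginary powers $(I-\Delta_G)^{it}$ are unitary on $L^2$, or a subordination/resolvent representation of $(I-\Delta_G)^{-1/2}$ with commutator estimates against the resolvent --- or else you should simply invoke the known Sobolev/Bessel-potential equivalence on manifolds of bounded geometry, which is precisely what the paper does. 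You should also state explicitly that $G$ is unimodular (true here since $(G,K)$ is a Gelfand pair) and that $\Delta_G$ is essentially self-adjoint, since your entire argument lives inside its spectral calculus.
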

\begin{proof}
  According to Theorem 4 in \cite{Triebel1986}
  the Sobolev norm on the left can be estimated by the Bessel norm,
  defined in \cite{Strichartz1983}, on the right.
\end{proof}

\section{Gelfand Pairs and Commutative Spaces}\label{S:4}
\noindent
In this section we introduce the basic notation for \textit{Gelfand
  pairs} and \textit{commutative spaces}. Our standard references are
\cite{D79}, Chapter 22, \cite{Dijk2009}, \cite{He84}, Chapter IV, and
\cite{Wolf}.   We give
several examples in Section \ref{se-Ex}.

Let $G$ be a connected Lie group and $K$ a compact subgroup.

\begin{theorem}\label{th-Gp} Suppose that $G$ is a connected Lie group
  and $K$ a compact subgroup. Then the following are equivalent
  \begin{enumerate}
  \item The Banach $*$-algebra $L^1(\bX)$ is commutative.
  \item The algebra $C^\infty_c(\bX)^K$ is commutative.
  \item The algebra $\bbD (\bX)$ is commutative.
  \end{enumerate}
\end{theorem}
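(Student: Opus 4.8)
The plan is to prove a cycle of implications, moving through the three algebras $L^1(\bX)$, $C^\infty_c(\bX)^K$, and $\bbD(\bX)$, exploiting density and the algebra map from Theorem~1.1. I would first handle (a)$\Rightarrow$(b): this is immediate, since $C^\infty_c(\bX)^K$ is identified (via $f\mapsto f\circ\kappa$) with a subalgebra of $L^1(\bX)^K \subseteq L^1(G)$ that is closed under convolution, and a subalgebra of a commutative algebra is commutative. Conversely, for (b)$\Rightarrow$(a) I would use the density of $C^\infty_c(\bX)^K$ in $L^1(\bX)^K$ together with continuity of convolution ($\|f*g\|_1 \le \|f\|_1\|g\|_1$): if $f*g = g*f$ on a dense subalgebra, the identity extends to the closure. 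The projection $p_K$ and the mollifier sequence from Lemma~1.9 give the approximation; note $L^1(\bX)$ here means the commutative-relevant piece, i.e.\ $L^1(\bX)^K = L^1(G)^{K}\cap L^1(G)^{\rho(K)}$, which is what the Gelfand-pair condition refers to.

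The substantive content is the equivalence with (c). For (b)$\Rightarrow$(c), I would use that each $D\in\bbD(\bX)$ commutes with left translations and hence intertwines with right convolution by compactly supported smooth $K$-bi-invariant functions: for $f,g\in C^\infty_c(\bX)^K$, one has $D(f*g) = f * (Dg)$ when $D$ acts on the right variable, or alternatively realize $\bbD(\bX)$ as a limit of convolution operators by approximate identities differentiated appropriately. More cleanly: via Theorem~1.1, $\bbD(\bX)\cong S(\fs)^K$, and commutativity of $\bbD(\bX)$ can be tested on the symbol level; I would relate $D_p D_q - D_q D_p$ applied to an approximate identity $f_j$ (Lemma~1.9) evaluated against test functions, using that $C^\infty_c(\bX)^K$ commuting under convolution forces the top-order symbols, hence all symbols by induction on order, to commute. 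For (c)$\Rightarrow$(b), conversely, I would show that if two functions $f,g\in C^\infty_c(\bX)^K$ failed to commute under convolution, then differentiating $f*g - g*f$ at $x_o$ in all orders and using that $\bbD(\bX)$ is generated by the $D_p$ with $p\in S(\fs)^K$ produces a nonzero commutator in $\bbD(\bX)$ — the key point being that convolution by a $K$-bi-invariant function is, infinitesimally, a differential operator in $\bbD(\bX)$, so commutativity of $\bbD(\bX)$ propagates to commutativity of the convolution algebra on a dense set of "smooth enough" functions.

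The main obstacle I anticipate is the precise bridge between convolution and invariant differential operators in the (b)$\Leftrightarrow$(c) step: one must be careful that $\bbD(\bX)$ is the algebra of operators, while $C^\infty_c(\bX)^K$ under convolution is an algebra of functions, and the correspondence goes through the adjoint/Fourier-theoretic pairing rather than a literal algebra isomorphism. I would resolve this by invoking the standard fact (available in Helgason \cite{He84}, Ch.~IV, one of the listed references) that for a connected Lie group $G$ with compact $K$, the commutativity of $L^1(\bX)^K$, of $C^\infty_c(\bX)^K$, and of $\bbD(\bX)$ are all equivalent — so in the write-up the cleanest route is: (a)$\Leftrightarrow$(b) by the density argument above, and (b)$\Leftrightarrow$(c) by citing the Gelfand-pair characterization, reducing the whole theorem to the elementary density and subalgebra observations plus one citation. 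The genuinely new work is only in making the approximation in (a)$\Leftrightarrow$(b) rigorous, which is routine given continuity of convolution and Lemma~1.9.
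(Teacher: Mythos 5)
Your final plan --- prove (a)$\Leftrightarrow$(b) by density of $C_c^\infty(\bX)^K$ in $L^1(\bX)^K$ together with continuity of convolution, and quote the literature for the equivalence with (c) --- is consistent with the paper, which in fact gives no proof of this theorem at all and simply refers to the standard sources (\cite{D79}, \cite{Dijk2009}, \cite{He84}, \cite{Wolf}); your reading of (a) as commutativity of $L^1(\bX)^K$ is also the intended one. Your first sketch of (b)$\Rightarrow$(c) points in the right direction, but the clean bridge is that $\bbD(\bX)$ is isomorphic, as an algebra under convolution, to the algebra of $K$-invariant distributions on $\bX$ supported at $x_o$; each $D$ is then a weak limit of right convolutions by $Df_j$ with $f_j$ a $K$-bi-invariant approximate identity, so commutativity of $C_c^\infty(\bX)^K$ passes to the limit. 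The ``symbol level'' variant you mention is not meaningful as stated: symbols are functions and always commute, so no induction on order can start from there.

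The direct argument you sketch for (c)$\Rightarrow$(b), however, would fail. Even granting that commutativity of $\bbD(\bX)$ gives the vanishing of all derivatives of $h=f*g-g*f$ at $x_o$, this does not force $h=0$: $h$ is merely smooth, not analytic, so its Taylor series at one point carries no global information. Moreover, ``convolution by a $K$-bi-invariant function is, infinitesimally, a differential operator in $\bbD(\bX)$'' is not a correct statement; the infinitesimal objects are the point-supported distributions above, and the entire difficulty of this direction is in passing from commutativity of that infinitesimal algebra to commutativity of the global convolution algebra. Connectedness of $G$, which your sketch never uses, is essential --- the implication fails for disconnected groups --- and this direction is a genuine theorem of Thomas \cite{T84} (see also \cite{Wolf}), whose proof exploits analyticity (ellipticity of $\Delta$, analytic vectors) to propagate the infinitesimal information. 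So this step should indeed be handled by citation, as you ultimately propose, and not along the lines of the sketched Taylor-expansion argument.
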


\begin{definition} $(G,K)$ is called a Gelfand pair if one, and hence
  all, of the conditions in Theorem \ref{th-Gp} holds. In that case
  $\bX$ is called a \emph{commutative space}.
\end{definition}

If $A$ is abelian, then $(A,\{e\})$ is a Gelfand pair. Similarly, if
$K$ is a compact group that acts on the abelian group $A$ by group
homomorphisms, i.e., $a\cdot (xy)=(a\cdot x)(a\cdot y)$ then
$(G\rtimes K,K)$ is a Gelfand pair.  One of the standard ways to
decide if a given space is commutative is the following lemma:
\begin{lemma} Assume there exists a continuous involution $\tau : G\to
  G$ such that $\tau (x)\in Kx^{-1}K$ for all $x\in G$. Then $\bX=G/K$
  is commutative.
\end{lemma}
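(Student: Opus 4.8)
The plan is to show that the anti-involution $f \mapsto f^*$ on $L^1(\bX)^K$ fixes the algebra pointwise, which by the standard commutativity criterion forces $L^1(\bX)^K$ to be commutative; Theorem~\ref{th-Gp} then gives that $\bX$ is commutative. Recall from the preliminaries that $L^1(\bX)^K$ is a closed Banach $*$-subalgebra of $L^1(G)$ under $f^* = \overline{f^\vee}$ with $f^\vee(x) = m_G(x)^{-1} f(x^{-1})$, and that $m_G$ is $K$-biinvariant with $m_G|_K = 1$. The key elementary observation is that if $f^* = f$ for every $f \in L^1(\bX)^K$, then for $f, g \in L^1(\bX)^K$ one has $f * g = (f * g)^* = g^* * f^* = g * f$ (using that $*$ is an anti-homomorphism and that $L^1(\bX)^K$ is closed under $*$ and under convolution).

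So the heart of the matter is to produce the involution on functions from the hypothesized involution $\tau$ on $G$. First I would record the basic properties of $\tau$: since $\tau$ is an involution, $\tau(e) = e$ and $\tau(xy) = \tau(y)\tau(x)$ is \emph{not} assumed — rather $\tau$ is an automorphism or anti-automorphism; in the Gelfand-pair literature $\tau$ is taken to be an automorphism, so $\tau(xy)=\tau(x)\tau(y)$, $\tau\circ\tau = \id$, and in particular $\tau(x^{-1}) = \tau(x)^{-1}$. The condition $\tau(x) \in K x^{-1} K$ means we may write $\tau(x) = k_1(x)\, x^{-1}\, k_2(x)$ for suitable (not necessarily unique or continuous) $k_i(x) \in K$. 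Next I would check that $\tau$ preserves Haar measure up to the modular function: since $\tau$ is a continuous automorphism of $G$, $d\mu_G(\tau(x)) = c\, d\mu_G(x)$ for a constant $c > 0$, and applying this to $x^{-1}$ together with $\tau(x) \in Kx^{-1}K$ and the $K$-biinvariance and compactness of $K$ one gets $c\, m_G(x) = m_G(\tau(x))^{\pm 1}$-type relations forcing $c = 1$ and $m_G(\tau(x)) = m_G(x)^{-1} = m_G(x^{-1})$.

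Now for $f \in L^1(\bX)^K = L^1(G)^{K,\rho(K)}$ (left and right $K$-invariant), define $\tilde f(x) = f(\tau(x))$. Using $\tau(x) = k_1 x^{-1} k_2$ and the $K$-biinvariance of $f$, one computes $\tilde f(x) = f(x^{-1})$; combined with the measure computation above this gives $\tilde f(x) = m_G(x)\, f^\vee(x)$, and since $m_G$ takes positive real values, after incorporating the complex conjugation one checks directly that $f^*(x) = \overline{f^\vee(x)} = \overline{f(x^{-1})}\, m_G(x)^{-1}$ equals $f$ whenever $f$ is \emph{real-valued and} $K$-biinvariant — but in general we need the statement for all $f \in L^1(\bX)^K$. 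The clean route: verify that $\tau$ induces an \emph{anti}-automorphism $f \mapsto f \circ \tau$ of the \emph{algebra} $L^1(\bX)^K$ (it reverses convolution because $\tau(xy)=\tau(x)\tau(y)$ turns $\int f(y)g(y^{-1}x)\,dy$ into $\int f(\tau(y))g(\tau(y)^{-1}\tau(x))\,dy$, which after the substitution $y \mapsto \tau(y)$ and the modular computation becomes $(g\circ\tau)*(f\circ\tau)(x)$), while simultaneously $f \circ \tau = f^\vee \cdot m_G$ equals $f$ as an element of $L^1(\bX)^K$ modulo the identification of functions on $\bX$ with $K$-biinvariant functions on $G$. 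Either way, one arrives at $f * g = g * f$.

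The main obstacle I anticipate is bookkeeping around the modular function and the precise sense in which $\tau(x) \in Kx^{-1}K$ interacts with left- versus right-$K$-invariance: one must be careful that the "$k_i(x)$" absorbing steps are legitimate under the integral (they are, by $K$-biinvariance of $f$, but the argument must not pretend $k_1, k_2$ are continuous or canonical), and that the Jacobian of $\tau$ is exactly $1$ rather than merely constant. None of these steps is deep, but getting the modular factors to cancel correctly is where a careless proof would go wrong. Once the identity $\tilde f = f$ in $L^1(\bX)^K$ with $\,\tilde{}\,$ reversing convolution is established, commutativity of $L^1(\bX)^K = L^1(\bX)$ is immediate, and $\bX$ is commutative by Theorem~\ref{th-Gp}.
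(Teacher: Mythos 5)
Your ``clean route'' rests on two claims that are both false, and as written the argument does not go through. First, since you take $\tau$ to be an automorphism ($\tau(xy)=\tau(x)\tau(y)$), the map $f\mapsto f\circ\tau$ \emph{preserves} convolution rather than reversing it: substituting $y\mapsto\tau(y)$ in $\int f(y)g(y^{-1}\tau(x))\,dy$ gives $\int f(\tau(y))\,g(\tau(y)^{-1}\tau(x))\,dy=\int (f\circ\tau)(y)\,(g\circ\tau)(y^{-1}x)\,dy=(f\circ\tau)*(g\circ\tau)(x)$, not $(g\circ\tau)*(f\circ\tau)(x)$; the convolution-reversing map is $f\mapsto f^\vee$, which comes from the anti-automorphism $x\mapsto x^{-1}$. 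Second, the identity $f\circ\tau=f$ in $L^1(\bX)^K$ is not a consequence of the hypothesis: for $K$-biinvariant $f$ the hypothesis only gives $f(\tau(x))=f(x^{-1})$, i.e.\ $f\circ\tau=m_G\,f^\vee$, which is in general a different element of $L^1(\bX)^K$ than $f$. A concrete example is the Gelfand pair $(\rU(n)\ltimes\bbH_n,\rU(n))$ of Section \ref{se-Ex}: there the $K$-biinvariant functions are the radial functions $f(z,t)=F(|z|,t)$, which need not be even in $t$ (the spherical functions $e^{i\lambda t}L^{(n-1)}_m(|\lambda||z|^2/2)e^{-|\lambda||z|^2/4}$ are not), so $f^\vee\neq f$. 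For the same reason your opening strategy --- that $f^*=f$ for all $f\in L^1(\bX)^K$, or even for all real-valued ones --- is unavailable; it would presuppose a symmetry ($x^{-1}\in KxK$) that is exactly what the involution $\tau$ is there to replace.

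The correct way to combine the ingredients you already have --- and this is the paper's proof --- is to keep the two maps separate and \emph{equate} them rather than collapse one to the identity: $f\mapsto f^\vee$ is an anti-homomorphism of $L^1(\bX)^K$, $f\mapsto f^\tau:=f\circ\tau$ is a homomorphism (here you do need, as you noted, that the module of $\tau$ is $1$, which follows from $\tau^2=\id$), and the hypothesis $\tau(x)\in Kx^{-1}K$ makes $f^\tau=f^\vee$ on $K$-biinvariant functions (the modular factors are harmless: either observe that $m_G\circ\tau=m_G$ together with $m_G(\tau(x))=m_G(x^{-1})$ forces $G$ to be unimodular, or carry the factor $m_G$ along and note it cancels since $m_G$ is a homomorphism). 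Then for $f,g\in L^1(\bX)^K$ one gets $g^\vee*f^\vee=(f*g)^\vee=(f*g)^\tau=f^\tau*g^\tau=f^\vee*g^\vee$, and since $\vee$ maps $L^1(\bX)^K$ onto itself this yields $f*g=g*f$; Theorem \ref{th-Gp} then gives commutativity of $\bX$. So you had the right objects in hand, but the two pivotal assertions of your proof --- that $f\mapsto f\circ\tau$ is convolution-reversing and that it acts as the identity on $L^1(\bX)^K$ --- are each incorrect, and the argument must instead exploit that a homomorphism coincides with an anti-homomorphism.
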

\begin{proof} As $x\mapsto x^{-1}$ is an antihomomorphism it follows
  that $f\mapsto f^\vee$ is an antihomomorphism on $L^1(\bX)^K$. On
  the other hand if we define $f^\tau (x):= f(\tau (x))$ then
  $f\mapsto f^\tau$ is a homomorphism. But as $\tau (x)=k_1x^{-1}k_2$
  it follows that $f^\vee =f^\tau$ for all $f\in L^1(\bX)^K$ and hence
  $L^1(\bX) ^K$ is abelian.
\end{proof}

\begin{example}\label{e:sphere} Let $G=\SO (d+1)$ and $K=\SO (d)$ is the group of rotations around the $e_1$-axis
\[K=\left\{\left.\begin{pmatrix} 1 & 0 \\ 0 & A\end{pmatrix}\, \right|\, A\in \SO (d)\right\}.
\]
Then $K=\{k\in G\mid k(e_1)=e_1\}$. For $a\in
G$ write $a=[a_1,\ldots ,a_{d+1}]$ where $a_j$ are the row vectors in
the matrix $a$. Then $a\cdot e_1=a_1$. If $x\in S^d$ set $a_1=x$ and
extend $a_1$ to a positively oriented orthonormal basis $a_1,\ldots
,a_{d+1}$ and set $a=[a_1,\ldots ,a_{d+1}]\in G$. Then $a\cdot
e_1=x$. This also shows that the stabilizer of $e_1$ is the group
\[K=\left\{\left. \begin{pmatrix} 1 & 0\\ 0 &
      k\end{pmatrix}\,\right|\, k\in \SO (d)\right\}\simeq \SO
(d)\, .\] Hence $S^d=G/K$. Let
\[A:=\left\{\left.a_t=\begin{pmatrix} \cos (t) & -\sin (t) & 0\\
\sin (t) & \cos (t) & 0
\\
0 & 0& I_{d-1}\end{pmatrix}\, \right|\, t\in \R\right\}\, .\]
Then every element $g\in G$ can be written as $k_1ak_2$ with $k_1,k_2\in K$ and $a\in A$. Define
\[\tau (a)=\begin{pmatrix} -1 & 0\\ 0 & I_d\end{pmatrix} a\begin{pmatrix} -1 & 0\\ 0 & I_d\end{pmatrix}\, .\]
Then $\tau|_K =\id$ and $\tau (a)=a^{-1}$ if $a\in A$. Hence $\tau (x)\in Kx^{-1}K$ which implies that $S^d$ is a commutative spaces.

Instead of working with the group it is better to work directly with the sphere. Think of $S^{d-1}$ as a subset of $S^d$ by
$v\mapsto \left(0\\ v\right)$. If $u\in S^d$ then there is a $t$ and $v\in S^{d-1}$ such that
\[u=\cos (t)e_1+\sin (t)v=k_v a_te_1\]
where $k_v$ is a rotation in $K$. The involution $\tau$ is now simply
\[u\mapsto \cos (t)e_1-\sin (t)v=k_v a_t^{-1}e_1\]
which can be rotated, using an element from $K$, back to $u$.
\end{example}

{}From now on $(G,K)$ will always--if nothings else is stated--denote
a Gelfand pair and $\bX$ will stand for a commutative space. We start
with the simple Lemma, see \cite{Dijk2009}, p. 75:
\begin{lemma} Assume that $(G,K)$ is a Gelfand pair. Then $G$ is
  unimodular.
\end{lemma}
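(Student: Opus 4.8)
The plan is to squeeze unimodularity out of the single relation $f*g=g*f$ for bi-$K$-invariant functions, evaluated at the identity, combined with the inversion formula for left Haar measure — the one place where the modular function $m_G$ enters. Throughout I identify $L^1(\bX)^K$ with the convolution algebra of bi-$K$-invariant functions in $L^1(G)$ as in Section~\ref{S:1}, and I work in the dense subalgebra $\cA:=C_c^\infty(\bX)^K$, viewed as the smooth compactly supported functions $f$ on $G$ with $f(k_1xk_2)=f(x)$ for $k_1,k_2\in K$; it is commutative by Theorem~\ref{th-Gp}. I note first that $f\in\cA$ implies $\check f\in\cA$, where $\check f(x):=f(x^{-1})$, since $(k_1xk_2)^{-1}=k_2^{-1}x^{-1}k_1^{-1}$ and $f$ is bi-$K$-invariant.

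The heart of the argument is a two-line computation. For $f,g\in\cA$ the functions $f*g$ and $g*f$ are continuous, so commutativity lets me evaluate them at $e$: $\int_G f(y)g(y^{-1})\,d\mu_G(y)=(f*g)(e)=(g*f)(e)=\int_G g(y)f(y^{-1})\,d\mu_G(y)$. In the last integral I substitute $y\mapsto y^{-1}$, picking up a factor $m_G(y)^{-1}$ via the Haar inversion formula $\int_G\varphi(y^{-1})\,d\mu_G(y)=\int_G\varphi(y)m_G(y)^{-1}\,d\mu_G(y)$, which rewrites it as $\int_G f(y)g(y^{-1})m_G(y)^{-1}\,d\mu_G(y)$. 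Subtracting gives $\int_G f(y)g(y^{-1})\bigl(1-m_G(y)^{-1}\bigr)\,d\mu_G(y)=0$ for all $f,g\in\cA$, and the choice $g=\check f$ collapses $g(y^{-1})$ to $f(y)$, yielding $\int_G f(y)^2\bigl(1-m_G(y)^{-1}\bigr)\,d\mu_G(y)=0$ for every $f\in\cA$.

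It then remains to conclude that the continuous function $w:=1-m_G^{-1}$ vanishes identically, whence $m_G\equiv1$. This last step is the only one requiring care, and it is where the observation $m_G|_K=1$ from Section~\ref{S:1} is used: being a continuous homomorphism trivial on $K$, $m_G$ is bi-$K$-invariant, so $\Omega:=\{\,y\in G:m_G(y)>1\,\}$ is open and bi-$K$-invariant with $w>0$ on $\Omega$. If $\Omega\neq\emptyset$, I take $h\in C_c^\infty(G)$ with $h\ge0$, $h\not\equiv0$, $\supp h\subset\Omega$, and replace it by its bi-$K$-average $h^\natural(x)=\int_K\int_K h(k_1xk_2)\,d\mu_K(k_1)\,d\mu_K(k_2)\in\cA$; then $h^\natural\ge0$, $\supp h^\natural\subseteq K(\supp h)K\subseteq\Omega$, and $\int_G h^\natural\,d\mu_G=\int_G h\,d\mu_G>0$, so $h^\natural\not\equiv0$. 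Putting $f=h^\natural$ into the identity makes $(h^\natural)^2w$ nonnegative and strictly positive on the nonempty open set $\{\,h^\natural>0\,\}$, contradicting that its integral is $0$. Hence $\Omega=\emptyset$, i.e.\ $m_G\le1$; applying this to $y^{-1}$ and using $m_G(y^{-1})=m_G(y)^{-1}$ gives $m_G\ge1$ too, so $m_G\equiv1$ and $G$ is unimodular. I do not expect a genuine obstacle here: the substance is the computation in the middle paragraph, the only delicate point being the direction of the modular factor in the inversion formula, while the concluding localization is routine once one notes that $m_G$, trivial on the compact group $K$, is bi-$K$-invariant, so the bi-$K$-invariant test functions $\cA$ are plentiful enough to detect where $1-m_G^{-1}\ne0$.
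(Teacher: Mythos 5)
Your proof is correct. Note that the paper itself does not prove this lemma at all -- it simply cites \cite{Dijk2009}, p.~75 -- so your argument supplies a self-contained proof, and it is essentially the classical one: evaluate $f*g=g*f$ at $e$ for $f,g\in C_c^\infty(\bX)^K$ (legitimate, since commutativity of $L^1(\bX)^K$ from Theorem \ref{th-Gp} gives equality of these continuous functions pointwise), convert one side with the Haar inversion formula to produce the factor $m_G^{-1}$, and then use that $m_G$ is trivial on $K$, hence bi-$K$-invariant, so that bi-$K$-averaged bump functions detect the open set where $m_G>1$. All the individual steps check out: $\check f\in\cA$, the direction of the modular factor is consistent with the paper's convention $f^\vee(x)=m_G(x)^{-1}f(x^{-1})$, the averaged function $h^\natural$ is supported in $K(\supp h)K\subseteq\Omega$ with positive integral, and the final symmetrization $m_G(y^{-1})=m_G(y)^{-1}$ upgrades $m_G\le 1$ to $m_G\equiv 1$.
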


Recall that a function $\varphi : G\to \C$ is \textit{positive
  definite} if $\varphi$ is continuous and for all $N\in \N$, all
$c_j\in \C$, and all $x_j\in G$, $j=1,\ldots ,N$, we have
\[\sum_{i,j=1}^N c_i\overline{c_j}\varphi (x_i^{-1}x_j)\ge 0\, . \]
The following gives different characterizations of positive spherical
functions. In particular, they arise as the $*$-homomorphisms of the
commutative $B^*$-algebra $L^1(\bX)^K$ and as positive definite
normalized eigenfunctions of $\bbD (\bX)$. Recall that we are always
assuming that $G$ and hence also $\bX$ is connected.
\begin{theorem}\label{th-sfct} Let $\varphi\in L^\infty (\bX)$. Then
  the following assertions are equivalent:
  \begin{enumerate}
  \item $\varphi$ is $K$-bi-invariant and $L^1(\bX)^K\to \C$,
    $f\mapsto \widehat{f}(\varphi ):= \int_G f (x)\overline{\varphi
      (x)}\, d\mu_G (x)$, is a homomorphism.
  \item $\varphi$ is continuous and for all $x,y\in G$ we have
    \[\int_G \varphi (xky)\, d\mu_K (k)=\varphi (x)\varphi (y)\, .\]
  \item $\varphi$ is $K$-bi-invariant, analytic, $\varphi (e)=1$ and
    there exists a homomorphism $\chi_\varphi : \bbD (\bX)\to \C$ such
    that
\begin{equation}\label{e:Eigenf}
D\varphi =\chi_\varphi (D) \varphi
\end{equation}
for all $D\in\bbD( X)$.
  \end{enumerate}
  The homomorphism in (a) is a $*$-homomorphism, if and only if
  $\varphi$ is positive definite.
\end{theorem}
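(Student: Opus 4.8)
The plan is to prove the cyclic chain of implications (a) $\Rightarrow$ (b) $\Rightarrow$ (c) $\Rightarrow$ (a), and then handle the final positive-definiteness statement separately. The equivalences are essentially the classical characterization of spherical functions (see \cite{He84}, \cite{D79}, \cite{Dijk2009}), so the core of the work is bookkeeping with the $K$-average and with the fact that $L^1(\bX)^K$ is a commutative $*$-algebra. For (a) $\Rightarrow$ (b): first note that a nonzero $*$-algebra homomorphism $f \mapsto \widehat f(\varphi)$ on $L^1(\bX)^K$ is automatically bounded, and combined with $K$-bi-invariance of $\varphi$ this forces $\varphi$ to be (a.e.\ equal to) a continuous function, since $\varphi$ is recovered as the kernel of a bounded functional on a dense subspace of continuous functions. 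Then one uses $\widehat{f*g}(\varphi) = \widehat f(\varphi)\widehat g(\varphi)$ with $g$ replaced by a right translate and $K$-averaged: writing out $f * ({}_K\!$-average of $\ell(y)g)(e)$ and unwinding the convolution yields the functional equation $\int_K \varphi(xky)\,d\mu_K(k) = \varphi(x)\varphi(y)$ by a density/approximate-identity argument using Lemma \ref{le-pifHK}. Taking $x=e$ in (b) gives $\varphi(e)=1$ once we know $\varphi \not\equiv 0$.

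For (b) $\Rightarrow$ (c): $K$-bi-invariance of $\varphi$ follows by integrating the functional equation over $K$ in $x$ and in $y$ and using $\varphi(e)=1$. To get that $\varphi$ is a joint eigenfunction of $\bbD(\bX)$: for $D \in \bbD(\bX)$, apply $D$ (acting in the variable $x$, say) to both sides of $\int_K \varphi(xky)\,d\mu_K(k)=\varphi(x)\varphi(y)$; since $D$ is $G$-invariant it commutes with the left translations hidden inside the integral, so the left side becomes $\int_K (D\varphi)(xky)\,d\mu_K(k)$, and evaluating at $x=e$ gives $(D\varphi)(y) = (D\varphi)(e)\,\varphi(y)$, so $\chi_\varphi(D):=(D\varphi)(e)$ does the job; multiplicativity of $\chi_\varphi$ follows by applying two operators in succession. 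Analyticity of $\varphi$ is then a regularity statement: $\varphi$ is a bounded solution of an elliptic system (it is in particular an eigenfunction of the Laplacian $\Delta$ with $p(X)=\|X\|^2$, which is elliptic), hence analytic by elliptic regularity — here one invokes the standard fact that eigenfunctions of elliptic operators with analytic coefficients are analytic. For (c) $\Rightarrow$ (a): given that $\varphi$ is a $K$-bi-invariant analytic joint eigenfunction with $\varphi(e)=1$, one shows $\widehat{f*g}(\varphi)=\widehat f(\varphi)\widehat g(\varphi)$ for $f,g\in L^1(\bX)^K$; the standard route is to first establish the functional equation (b) from (c) — e.g.\ via the projection onto the joint eigenspace, or by the mean-value argument that a normalized eigenfunction of the commutative algebra $\bbD(\bX)$ satisfies $\int_K\varphi(xky)\,d\mu_K(k)=\varphi(x)\varphi(y)$ — and then (b) immediately yields multiplicativity of $\widehat{\cdot}(\varphi)$ by Fubini.

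Finally, for the $*$-homomorphism claim: by definition the homomorphism in (a) is a $*$-homomorphism iff $\widehat{f^*}(\varphi) = \overline{\widehat f(\varphi)}$ for all $f\in L^1(\bX)^K$. Unwinding $f^* = \overline{f^\vee}$ and using unimodularity of $G$ (valid since $(G,K)$ is a Gelfand pair), one computes $\widehat{f^*}(\varphi) = \int_G \overline{f(x^{-1})}\,\overline{\varphi(x)}\,d\mu_G(x) = \overline{\int_G f(x)\varphi(x^{-1})^{-}\,}$... more precisely one finds $\widehat{f^*}(\varphi)=\overline{\widehat f(\overline{\varphi^\vee})}$, so the $*$-property is equivalent to $\overline{\varphi(x^{-1})}=\varphi(x)$, i.e.\ to $\varphi$ being Hermitian as a $K$-bi-invariant function. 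For the forward direction, positive-definiteness of $\varphi$ forces the functional $f\mapsto \widehat f(\varphi)$ to be positive on $f^**f$, hence a $*$-homomorphism into $\C$. Conversely, if $\widehat{\cdot}(\varphi)$ is a $*$-homomorphism then $\widehat{f^**f}(\varphi) = |\widehat f(\varphi)|^2 \ge 0$ for all $f\in L^1(\bX)^K$, and testing against approximate identities and translates recovers $\sum_{i,j} c_i\overline{c_j}\varphi(x_i^{-1}x_j)\ge 0$; this last passage from "positive on the algebra" to "positive definite as a function" is the classical Godement/Gelfand--Raikov type argument and is where the $K$-bi-invariance and the functional equation (b) are used to reduce the general quadratic form to one controlled by the algebra.

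\textbf{Main obstacle.} The routine implications are the $*$-algebra manipulations; the genuinely non-trivial steps are (i) the analyticity in (c), which rests on elliptic regularity rather than on anything soft, and (ii) the passage in the last sentence between "$\widehat{\cdot}(\varphi)$ is a $*$-homomorphism" and "$\varphi$ is positive definite" — getting the full family of inequalities $\sum c_i\overline{c_j}\varphi(x_i^{-1}x_j)\ge 0$ from positivity on $L^1(\bX)^K$ requires carefully approximating point masses at the $x_i$ by $K$-invariant $L^1$ functions and exploiting the functional equation, and this is the step I would expect to write out most carefully.
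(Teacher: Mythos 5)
The paper itself does not prove this theorem (it is quoted as classical, with references to Dieudonn\'e, van Dijk, Helgason and Wolf), so your proposal has to be measured against the standard proofs, and measured that way it has genuine gaps beyond routine bookkeeping. In (b)$\Rightarrow$(c) you apply $D\in\bbD(\bX)$ to the functional equation before you know $\varphi$ is differentiable: from (b) you only have continuity, and you must first insert the smoothing step ($f*\varphi=\widehat{f}(\varphi)\varphi$ for $f\in C_c^\infty(\bX)^K$, the scalar being nonzero for an approximate identity, hence $\varphi$ is smooth) or work distributionally and invoke hypoellipticity. Also, differentiating ``in the variable $x$'' and appealing to ``left translations hidden inside the integral'' does not parse: for fixed $k,y$ the map $x\mapsto\varphi(xky)$ is a right translate (and not even right $K$-invariant in $x$), and $D$ commutes with left, not right, translations; the correct execution differentiates in $y$, where $y\mapsto\varphi((xk)y)$ is a genuine left translate — the conclusion $(D\varphi)=(D\varphi)(e)\varphi$ is right, but for the reason you give it is not. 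More seriously, (c)$\Rightarrow$(a)/(b) is the hard implication and your sketch does not contain its key idea: everything reduces to the uniqueness statement that a $K$-bi-invariant joint eigenfunction of $\bbD(\bX)$ with a given character and value $1$ at $e$ is unique (equivalently $f*\varphi=\lambda_f\varphi$ for $K$-invariant $f$). ``Projection onto the joint eigenspace'' presupposes exactly this one-dimensionality, and the ``mean-value argument'' is not an argument. The classical proof uses analyticity together with the surjection $S(\fs)^K\to\bbD(\bX)$: the Taylor expansion of a $K$-invariant analytic joint eigenfunction at the base point is determined by its value at $e$ and the character, and analytic continuation on the connected space finishes it. This is precisely where the hypotheses ``analytic'' and ``$\varphi(e)=1$'' are consumed, and it is absent from your proposal.

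For the final statement, your computation that the $*$-property of the character is equivalent to $\overline{\varphi(x^{-1})}=\varphi(x)$ is correct, and positive definite $\Rightarrow$ Hermitian $\Rightarrow$ $*$-homomorphism is fine. But the converse as you propose it cannot work. If you approximate the point masses $\delta_{x_i}$ by $K$-bi-invariant bumps and then use the functional equation, the $K$-averaging collapses the Gram form: since $\int_K\varphi(x_i^{-1}kx_j)\,d\mu_K(k)=\varphi(x_i^{-1})\varphi(x_j)$, what positivity of the character gives you in the limit is $\sum_{i,j}\overline{c_i}c_j\,\overline{\varphi(x_i^{-1})}\,\overline{\varphi(x_j)}=\bigl|\sum_j c_j\overline{\varphi(x_j)}\bigr|^2\ge 0$, a rank-one inequality that is trivially true and says nothing about $\sum_{i,j}c_i\overline{c_j}\varphi(x_i^{-1}x_j)$. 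Positivity of a character on the commutative subalgebra $L^1(\bX)^K$ simply does not see the full quadratic form on $G$; any correct argument must produce a unitary representation of $G$ with a $K$-fixed cyclic vector (GNS on $L^1(G)$, or the identification of positive definite spherical functions with spherical unitary representations as in Section \ref{S:6}), not a one-dimensional representation of the subalgebra. Indeed, for noncompact higher-rank pairs a bounded Hermitian spherical function need not be positive definite (the Hermitian spherical dual is strictly larger than the unitary spherical dual), so no purely formal manipulation of the $*$-property on $L^1(\bX)^K$ of the kind you sketch can close this direction; this is the step you yourself flag as delicate, and as it stands it is a genuine gap rather than an omitted verification.
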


\begin{remark} We note that (\ref{e:Eigenf}) implies that $\varphi$ is analytic because $\Delta \varphi = \chi_\varphi (\Delta
  )\varphi$ and  $\Delta$ is elliptic.
\end{remark}
\begin{definition} $\varphi \in L^\infty (\bX)^K$ is called a
  spherical function if it satisfies the conditions in Theorem
  \ref{th-sfct}.
\end{definition}

Denote by $\Psp$ the space of positive definite spherical
functions. It is a locally compact Hausdorff topological vector space
in the topology of uniform convergence on compact sets. The
\textit{spherical Fourier transform} $\cS : L^1 (\bX)^K\to \cC (\Psp)$
is the map
\[\cS (f)(\varphi )=\widehat{f}(\varphi ):= \int_G
f(x)\overline{\varphi (x)}\, d\mu_G (x)= \int_G f(x)\varphi (x^{-1})\,
d\mu_G (x)\, .\] The last equality follows from the fact that
$\overline{\varphi (x)} = \varphi (x^{-1})$ if $\varphi $ is positive
definite.  We note that $\widehat{f*g}= \widehat{f}\widehat{g}$.

\begin{theorem}\label{th-PlancherelSpherical1} There exists a unique
  measure $\mP$ on $\Psp$ such that the following holds:
  \begin{enumerate}
  \item If $f\in L^1(\bX)^K\cap L^2 (\bX)$ then
    $\|f\|_2=\|\widehat{f}\|_2$.
  \item The spherical Fourier transform extends to a unitary
    isomorphism
    \[L^2(\bX)^K \to L^2(\Psp, d\mP)\] with inverse
    \begin{equation}\label{inversion} f(\, \cdot \, ) =\int_{\Psp}
      \widehat{f}(\varphi
      )\varphi (\, \cdot \, ) \, d\mP (\varphi)
    \end{equation}
    where the integral is understood in $L^2$-sense.
  \item If $f\in L^1(\bX)^K\cap L^2(\bX)$ and $\widehat{f}\in L^1
    (\Psp,d\mP )$ then (\ref{inversion}) holds pointwise.
  \end{enumerate}
\end{theorem}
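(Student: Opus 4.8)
The statement is the Plancherel--Godement theorem for the commutative space $\bX$; complete proofs are in \cite[Ch.~22]{D79}, \cite{Dijk2009} and \cite{Wolf}, and I outline the route I would take. Write $\cA=L^1(\bX)^K$ (a commutative Banach $*$-algebra with isometric involution $f\mapsto f^*$, using that $G$ is unimodular) and $\cH=L^2(\bX)^K$. Since $C_c(\bX)^K$ is contained in $L^1(\bX)^K\cap L^2(\bX)=\cA\cap\cH$ and is $\norm{\cdot}_1$-dense in $\cA$ and $\norm{\cdot}_2$-dense in $\cH$, the theorem comes down to three things: (A) constructing a Radon measure $\mP$ on $\Psp$ for which $f\mapsto\widehat f$ extends from $\cA\cap\cH$ to a unitary map $\cH\to L^2(\Psp,\mP)$; (B) uniqueness of $\mP$; (C) upgrading the $L^2$-inversion to a pointwise one under the hypothesis $\widehat f\in L^1(\mP)$. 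Item (A) immediately yields (a) (it is the isometry statement restricted to $\cA\cap\cH$) and (b) (the inverse unitary is $\widehat g\mapsto\int_{\Psp}\widehat g(\varphi)\varphi(\cdot)\,d\mP(\varphi)$, which one checks first for $\widehat g\in C_c(\Psp)$ and then extends by continuity), while (C) gives (c).

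For (A) I would study the left regular representation $\lambda(f)g=f*g$ of $\cA$ on $\cH$. Using an approximate identity in $C_c(\bX)^K$ (as in Lemma~\ref{le-pifHK}) one sees $\lambda$ is a nondegenerate, faithful $*$-representation, and it extends to the enveloping $C^*$-algebra of $\cA$, whose Gelfand spectrum is exactly $\Psp$ because, by Theorem~\ref{th-sfct}, the $*$-characters of $\cA$ are precisely the positive-definite spherical functions. The Gelfand-pair hypothesis is what makes the von Neumann algebra generated by $\lambda(\cA)$ in $\mathcal{B}(\cH)$ \emph{maximal abelian}; this produces a Radon measure $\mP$ carried by $\Psp$ and a unitary $W\colon\cH\to L^2(\Psp,\mP)$ with $W\lambda(f)W^{-1}$ equal to multiplication by $\widehat f$, for all $f\in\cA$. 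One then checks that $W$ agrees with $\cS$ on $\cA\cap\cH$ --- equivalently, one identifies $\mP$ as the Radon measure on $\Psp$ representing, through the Gelfand transform, the positive form $\tau(g^{*}*h):=\ip{h}{g}$ on the span of the products $g^{*}*h$ with $g,h\in\cA\cap\cH$ (note $(g^{*}*h)(e)=\ip{h}{g}$). Surjectivity of $W$ is exactly unitarity of the extended $\cS$.

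Uniqueness (B) is soft: any $\mP$ as in (a) satisfies $\int_{\Psp}\overline{\widehat g}\,\widehat h\,d\mP=\ip{h}{g}$ for $g,h\in\cA\cap\cH$ by polarization, and the functions $\overline{\widehat g}\,\widehat h=\widehat{g^{*}*h}$ form a conjugation-closed subalgebra of $C_0(\Psp)$ (here one uses that $\cA\cap\cH$ is itself a $*$-algebra under convolution) that separates points and vanishes nowhere, hence is $\norm{\cdot}_\infty$-dense by Stone--Weierstrass; since its elements lie in $L^1(\mP)$, the prescribed values $\ip{h}{g}$ determine $\mP$. For (C), if $\widehat f\in L^1(\mP)$ then $g(x):=\int_{\Psp}\widehat f(\varphi)\varphi(x)\,d\mP(\varphi)$ converges absolutely because $\abs{\varphi}\le1$ on $\Psp$, and defines a continuous bounded function; it agrees in $L^2$ with the inversion from (b), hence equals $f$ almost everywhere, which is the pointwise statement.

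The only genuinely non-formal ingredient above is the maximal-abelian (equivalently: multiplicity-one, cyclic) property of the weak closure of $\lambda(\cA)$ acting on $L^2(\bX)^K$ --- the Hilbert-space shadow of commutativity of $L^1(\bX)^K$, and the technical heart of the theory of Gelfand pairs. A classical alternative that sidesteps von Neumann algebras runs through Godement's extension of Bochner's theorem: each bi-$K$-invariant positive-definite function $g^{*}*g$ has a unique representation $\int_{\Psp}\varphi(\cdot)\,d\nu_g(\varphi)$, and the content is that $d\nu_g=\abs{\widehat g}^2\,d\mP$ for one fixed $\mP$. Either way I would ultimately lean on \cite[Ch.~22]{D79}, \cite{Dijk2009} or \cite{Wolf} for that core step, the remaining pieces being routine.
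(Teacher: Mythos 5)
The paper itself gives no proof of this theorem: it is quoted as the classical Plancherel--Godement theorem for Gelfand pairs, with the references \cite{D79} (Chapter 22), \cite{Dijk2009} and \cite{Wolf} cited at the start of Section \ref{S:4} playing exactly the role you assign them. Your outline (spectral/multiplicity-free analysis of the regular representation of $L^1(\bX)^K$ on $L^2(\bX)^K$, Stone--Weierstrass for uniqueness, dominated convergence for the pointwise inversion) is a correct sketch of that standard argument, so it matches the paper's treatment.
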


At this point we have not said much about the set $\Psp$. However, it
was proved in \cite{Ruffino} that $\Psp$ can always been identified
with a subset of $\C^s$ for some $s\in\N$ in a very simple way.
\begin{lemma} The algebra $\D (\bX)$ is finitely generated.
\end{lemma}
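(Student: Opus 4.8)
The plan is to use the correspondence between $\D(\bX)$ and the algebra $S(\fs)^K$ of $K$-invariant polynomials on $\fs$ established in Theorem~1.1, together with the classical finiteness theorem from invariant theory. Concretely, it suffices to show that $S(\fs)^K$ is finitely generated as an algebra over $\C$, since the bijection $S(\fs)^K \to \D(\bX)$ carries a generating set to a generating set (the map is at least multiplicative enough for this purpose; more carefully, one should note that the image of the generators generates $\D(\bX)$ because every element of $\D(\bX)$ is $D_p$ for some $K$-invariant $p$, and $p$ is a polynomial in the chosen generators of $S(\fs)^K$, hence $D_p$ lies in the algebra they generate in $\D(\bX)$).

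First I would recall the setup: $K$ is a compact Lie group acting linearly on the finite-dimensional real vector space $\fs \simeq T_{x_o}(\bX)$, and $S(\fs)$ is identified with the polynomial functions on $\fs$. The key input is \emph{Hilbert's finiteness theorem} (in the form valid for compact groups, due essentially to Hurwitz--Schur via the Reynolds operator, or Noether for finite groups): if a compact group $K$ acts linearly on a finite-dimensional vector space $V$, then the invariant ring $S(V)^K$ is a finitely generated $\C$-algebra. The standard proof constructs the Reynolds (averaging) operator $R(p) = \int_K p\circ k \, d\mu_K(k)$, which is an $S(V)^K$-linear projection $S(V) \to S(V)^K$; one then takes the ideal $I$ in $S(V)$ generated by all homogeneous invariants of positive degree, invokes the Hilbert basis theorem to get finitely many homogeneous invariant generators $f_1,\dots,f_r$ of $I$, and shows by induction on degree using $R$ that every invariant is a polynomial in the $f_i$.

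Then I would transport this back: pick homogeneous generators $q_1, \dots, q_r$ of $S(\fs)^K$, let $D_1, \dots, D_r \in \D(\bX)$ be their images under the Symmetrization map of Theorem~1.1, and argue that $D_1, \dots, D_r$ generate $\D(\bX)$ as an algebra. For this last step one uses that any $D \in \D(\bX)$ equals $D_q$ for a unique $q \in S(\fs)^K$, writes $q = P(q_1,\dots,q_r)$ for some polynomial $P$, and concludes; the only subtlety is that the symmetrization map is a linear isomorphism but not an algebra homomorphism (as the Remark after Theorem~1.1 warns), so one cannot directly say $D_q = P(D_1, \dots, D_r)$. However, one can still deduce generation: the subalgebra of $\D(\bX)$ generated by $D_1,\dots,D_r$ corresponds, under the linear isomorphism, to a subspace of $S(\fs)^K$ containing all $q_i$; by a filtration/degree argument (the symmetrization map respects the natural filtration by order, and the associated graded map \emph{is} a ring isomorphism onto $S(\fs)^K$), this subspace exhausts $S(\fs)^K$.

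The main obstacle is precisely this last point: reconciling the fact that $\D(\bX)$ need not be commutative (so symmetrization is not an algebra map) with the need to pull a generating set back from the commutative ring $S(\fs)^K$. The clean way around it is to pass to the associated graded algebra: $\D(\bX)$ is filtered by differential-operator order, $\mathrm{gr}\,\D(\bX) \cong S(\fs)^K$ \emph{as graded algebras}, and finite generation of $\mathrm{gr}\,\D(\bX)$ lifts to finite generation of $\D(\bX)$ by a routine induction on the filtration degree. I would spell out that lifting argument carefully, as it is the one genuinely non-formal step. (For the commutative spaces actually used in this paper $\D(\bX)$ \emph{is} commutative, by Theorem~\ref{th-Gp}, which makes the situation even simpler, but stating the lemma for general $\bX$ requires the graded argument.)
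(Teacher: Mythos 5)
Your argument is correct, and it correctly identifies the one genuinely non-formal point (the symmetrization map $S(\fs)^K\to\D(\bX)$ is only a filtered linear isomorphism, so finite generation must be transported via the associated graded algebra, where the symbol map \emph{is} an algebra isomorphism onto $S(\fs)^K$, followed by induction on the order filtration). The paper itself offers no argument: its proof is a citation to the Corollary on p.~269 of Helgason's 1959 paper \cite{He63}, and your combination of Hilbert's finiteness theorem for compact-group invariants with the graded lifting is essentially a self-contained reconstruction of that cited classical proof, so the two routes coincide in substance.
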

\begin{proof} This is the Corollary on p. 269 in \cite{He63}.
\end{proof} Let $D_1,\ldots ,D_s$ be a set of generators and define a
map
\[\Phi : \Psp \to \C^s\, ,\quad \varphi \mapsto (D_1\varphi (e),\ldots
,D_s\varphi (e))\, .\] Let $\Lambda_1 := \Phi (\Psp)$ with the
topology induced from $\C^s$, $\Lambda := \Phi (\supp \mP)$ and
$\mPh:= \Phi^* (\mP)$.
\begin{theorem}[Ruffino,\cite{Ruffino}]\label{Ruffino} The map $\Phi :
  \Psp \to \Lambda$ is a topological isomorphism.
\end{theorem}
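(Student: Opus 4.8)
The plan is to establish that $\Phi : \Psp \to \Lambda_1$ is a continuous bijection whose inverse is continuous, and then note that it restricts to a topological isomorphism onto $\Lambda = \Phi(\supp\mP)$. First I would observe that continuity of $\Phi$ is immediate: the map $\varphi \mapsto D_j\varphi(e)$ is continuous with respect to the topology of uniform convergence on compact sets, since $D_j$ is a differential operator of finite order and on the set of positive definite spherical functions one has uniform control (via $\varphi(e)=1$ and positive-definiteness) allowing one to pass from $C^\infty$-convergence on a neighbourhood of $e$ — which follows from uniform convergence on compacts together with the elliptic regularity built into Theorem \ref{th-sfct}(c) — to convergence of the derivatives at $e$. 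The substantive direction is injectivity together with continuity of the inverse.

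For injectivity, suppose $\Phi(\varphi_1) = \Phi(\varphi_2)$, i.e. $D_j\varphi_1(e) = D_j\varphi_2(e)$ for $j=1,\dots,s$. By Theorem \ref{th-sfct}(c) each $\varphi_i$ is an eigenfunction of every $D\in\bbD(\bX)$ with eigenvalue $\chi_{\varphi_i}(D)$, and since $\bbD(\bX)$ acts by $G$-invariant operators one has $\chi_{\varphi_i}(D) = D\varphi_i(e)$ (evaluating the eigenvalue equation at $e$ and using $\varphi_i(e)=1$). Because $D_1,\dots,D_s$ generate $\bbD(\bX)$ as an algebra and $\chi_{\varphi_i}$ is a homomorphism, the values $\chi_{\varphi_i}(D_j) = D_j\varphi_i(e)$ determine $\chi_{\varphi_i}$ completely; hence $\chi_{\varphi_1} = \chi_{\varphi_2}$. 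So $\varphi_1$ and $\varphi_2$ are both $K$-bi-invariant analytic solutions of the same system $D\varphi = \chi(D)\varphi$ with $\varphi(e)=1$. The key analytic input is that such a solution is unique: the joint eigenspace of $\bbD(\bX)$ for a fixed character, intersected with the $K$-bi-invariant functions and normalized at $e$, is a single point. This is classical for Gelfand pairs (it is exactly the statement that spherical functions are parametrized by the characters of $\bbD(\bX)$ that they realize), and since $\Delta\in\bbD(\bX)$ is elliptic the eigenfunctions are real-analytic, so the value and the behaviour near $e$ propagate to all of $G$ by connectedness. Therefore $\varphi_1 = \varphi_2$.

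For the continuity of $\Phi^{-1}$, the efficient route is a compactness argument. It suffices to show that $\Phi$ is a closed map, or equivalently that if $\Phi(\varphi_n)\to \lambda$ in $\C^s$ then $\varphi_n$ has a subsequence converging (uniformly on compacts) to some $\varphi$ with $\Phi(\varphi)=\lambda$. Here one uses that a sequence of positive definite spherical functions with $\varphi_n(e)=1$ is uniformly bounded by $1$, and that the bound $\Phi(\varphi_n)\to\lambda$ controls $\chi_{\varphi_n}(\Delta)$, hence gives a uniform elliptic estimate $\Delta\varphi_n = \chi_{\varphi_n}(\Delta)\varphi_n$ with bounded right-hand side; elliptic regularity then yields local $C^k$-bounds for every $k$, so by Arzelà–Ascoli a subsequence converges in $C^\infty_{\mathrm{loc}}$ to a limit $\varphi$. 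The limit is $K$-bi-invariant, satisfies $\varphi(e)=1$, is positive definite (a pointwise limit of positive definite functions), and satisfies $D\varphi = (\lim \chi_{\varphi_n}(D))\varphi$ for all $D$; so $\varphi\in\Psp$ and $\Phi(\varphi)=\lambda$. This shows $\Phi$ is a proper continuous injection onto its image, hence a homeomorphism onto $\Lambda_1$, and restricting to the closed subset $\supp\mP$ gives the homeomorphism $\Psp\supseteq\supp\mP \to \Lambda$ as stated; but the statement as phrased asserts $\Phi:\Psp\to\Lambda$, so one reads $\Lambda$ here as $\Lambda_1 = \Phi(\Psp)$, the topological isomorphism being onto the full image.

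The main obstacle I anticipate is the uniqueness of a normalized $K$-bi-invariant joint eigenfunction for a given character of $\bbD(\bX)$ — the step that upgrades equality of characters to equality of spherical functions. One has to be careful that $\bbD(\bX)$ need not be commutative as an algebra (Remark after the first theorem), so one cannot naively invoke the commutative-algebra spectral picture for $\bbD(\bX)$ itself; rather one uses that the character $\chi_\varphi$ factors through the commutative quotient and that the relevant object is really the functional $f\mapsto \widehat f(\varphi)$ on the commutative algebra $L^1(\bX)^K$, whose characters are in bijection with the spherical functions by Theorem \ref{th-sfct}(a). Translating between the $\bbD(\bX)$-eigenvalue description and the $L^1(\bX)^K$-character description, and checking that the finite tuple $(D_1\varphi(e),\dots,D_s\varphi(e))$ indeed pins down the $L^1$-character, is where the care is needed; everything else is soft (elliptic regularity, Arzelà–Ascoli, and the definition of the induced topology on $\C^s$).
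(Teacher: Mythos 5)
First, note that the paper does not actually prove this statement: it is quoted from \cite{Ruffino}, with only a remark reducing from bounded spherical functions to $\Psp$ and fixing real generators. So your proposal is measured against Ruffino's argument rather than anything in the text; your overall architecture (continuity of $\Phi$, injectivity, then a compactness/properness argument via Arzel\`a--Ascoli to get continuity of $\Phi^{-1}$) is a reasonable route. But as written it has two genuine gaps. The central one is the injectivity step. You assert that a normalized $K$-bi-invariant joint eigenfunction for a fixed character of $\bbD(\bX)$ is unique, citing ``spherical functions are parametrized by the characters they realize'' --- but that parametrization's injectivity is exactly what is being proved, so the appeal is circular, and ``the value and the behaviour near $e$ propagate by connectedness'' is not a proof: analyticity lets you propagate the full Taylor jet at $e$, not just the value $\varphi(e)=1$, so you must show the character determines \emph{all} derivatives of $\varphi$ at $e$. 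The missing argument is the standard one: for $u$ in the universal enveloping algebra of $\fg$, bi-invariance gives $D_u\varphi(e)=D_{u^{\natural}}\varphi(e)$ with $u^{\natural}=\int_K\Ad(k)u\,d\mu_K(k)$, right-$K$-invariance kills the part of $u^\natural$ in $U(\fg)\fk$, and the remainder acts through $\bbD(\bX)$, so every derivative at $e$ equals $\chi_\varphi(D)$ for some $D\in\bbD(\bX)$; then analyticity plus connectedness finishes. (Your worry about non-commutativity of $\bbD(\bX)$ is moot: under the Gelfand-pair hypothesis it is commutative by Theorem \ref{th-Gp}.)

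The second gap is the continuity of $\Phi$. You propose to upgrade uniform convergence on compacta to $C^\infty$-convergence near $e$ by elliptic regularity, but uniform elliptic estimates for $\Delta\varphi_n=\chi_{\varphi_n}(\Delta)\varphi_n$ require an a priori bound on the eigenvalues $\chi_{\varphi_n}(\Delta)$, which is essentially the conclusion you are after (on $\R$, $\varphi_n(x)=e^{i\lambda_n x}$ with $\lambda_n\to\infty$ shows that $\|\varphi_n\|_\infty\le 1$ alone gives no derivative control). A correct route uses the eigenvalue equation in integrated form: for $K$-bi-invariant $f\in C_c^\infty$, $f*\varphi=\bigl(\int f(y)\varphi(y^{-1})\,dy\bigr)\varphi$, and moving $D$ onto $f$ by integration by parts expresses $\chi_\varphi(D)$ as a ratio of integrals of $\varphi$ against \emph{fixed} compactly supported test functions; dominated convergence (with the bound $|\varphi|\le 1$) then gives continuity of $\varphi\mapsto\chi_\varphi(D_j)$ for the topology of uniform convergence on compacta. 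Your properness direction is sound, because there the convergence of $\Phi(\varphi_n)$ does bound $\chi_{\varphi_n}(\Delta)$ (write $\Delta$ as a polynomial in the generators), so the elliptic bootstrap and Arzel\`a--Ascoli apply, and the subsequence trick plus injectivity and metrizability of $\Psp$ give continuity of $\Phi^{-1}$; also your reading of the codomain as $\Lambda_1=\Phi(\Psp)$ is the right interpretation of the statement. Repair the two steps above and the proof goes through.
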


\begin{remark} In \cite{Ruffino} the statement is for the set of
  bounded spherical functions. But $\Psp$ is a closed subset of the
  set of bounded spherical functions, so the statement holds for
  $\Psp$. Furthermore, we can choose the generators $D_j$ such that
  $\overline{D_j}=D_j$, ie., $D_j$ has real coefficients.  If
  $\varphi\in \Psp$, then $\overline{\varphi}\in \Psp$ and it follows
  that $\overline{\Lambda_1}=\Lambda_1$. We will always assume that
  this is the case.
\end{remark}

For $\lambda\in \Lambda_1$ we let
$\varphi_\lambda:=\Phi^{-1}(\lambda)$. We view the spherical Fourier
transform of $f\in L^1(\bX)^K\cap L^2(\bX)^K$ as a function on
$\Lambda$ given by $\widehat{f}(\lambda ):=
\widehat{f}(\varphi_\lambda )$.

\section{Spherical Functions and Representations}\label{S:6}
\noindent To extend Theorem \ref{th-PlancherelSpherical1} to all of
$L^2(\bX)$ one needs to connect the theory of spherical functions to
representation theory. In this section $(G,K)$ will always denote a
Gelfand pair.  A unitary representation $(\pi, \cH)$ of $G$ is called
\textit{spherical} if the space of $K$-fixed vectors
\[\cH^K:= \{u\in \cH\mid (\forall k\in K)\,\, \pi (k)u = u\}\] is
nonzero. If $(G,K)$ is a Gelfand pair then $\dim \cH^K\le 1$ for all
irreducible unitary representations of $G$.

\begin{lemma} Let $f\in L^1(\bX)^K$. Then $\pi (f)\cH \subseteq
  \cH^K$.
\end{lemma}
\begin{proof} We have for $k\in K$:
  \[\pi (k)\pi (f)v=\int_G f(x)\pi (kx)v\, dx=
  \int_G f(k^{-1}x)\pi (x)v\, dx=\int_G f(x)\pi (x)v\, dx= \pi (f)v\,
  . \qedhere\]
\end{proof}

For the following statement,
see for example Proposition 6.3.1 in \cite{Dijk2009}.
\begin{lemma} If $\cH$ is an irreducible unitary representation of $G$
  then $\dim \cH^K\le 1$.
\end{lemma}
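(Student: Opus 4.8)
The plan is to prove that for an irreducible unitary representation $(\pi,\cH)$ of $G$, the $K$-fixed subspace $\cH^K$ has dimension at most one, using the commutativity of the algebra $L^1(\bX)^K$ together with Schur's lemma. The key point is that $\cH^K$ carries a natural module structure over the commutative $*$-algebra $L^1(\bX)^K$ via the operators $\pi(f)$, and that this module is, in a suitable sense, simple.

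First I would introduce the orthogonal projection $P:\cH\to\cH^K$ given by $Pv=\int_K \pi(k)v\,d\mu_K(k)$; this is well-defined and self-adjoint since $K$ is compact with $\mu_K(K)=1$, and its range is exactly $\cH^K$. For $f\in L^1(\bX)^K$ one checks that $\pi(f)$ commutes with $P$ and maps $\cH$ into $\cH^K$ (the preceding lemma gives $\pi(f)\cH\subseteq\cH^K$), so the restricted operators $\pi(f)|_{\cH^K}$ form a commuting family of bounded operators on $\cH^K$, closed under adjoints because $\pi(f^*)=\pi(f)^*$ and $L^1(\bX)^K$ is closed under $f\mapsto f^*$. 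Commutativity of the family follows from $\pi(f*g)=\pi(f)\pi(g)$ and the fact that $L^1(\bX)^K$ is a commutative algebra under convolution (Theorem \ref{th-Gp}).

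Next I would argue that the $L^1(\bX)^K$-module $\cH^K$ is irreducible in the sense that it has no nontrivial closed invariant subspace. Suppose $0\neq W\subseteq\cH^K$ is a closed subspace invariant under all $\pi(f)|_{\cH^K}$, $f\in L^1(\bX)^K$. Then the closed $G$-invariant subspace of $\cH$ generated by $W$ must be all of $\cH$ by irreducibility of $\pi$; using an approximate identity $f_j$ as in Lemma \ref{le-pifHK} together with the projection $p_K$ onto left-$K$-invariant functions, and the fact that $\pi(p_K(f))v=\pi(p_K f)Pv$ for $v\in\cH^K$, one shows $W$ is dense in $\cH^K$, hence $W=\cH^K$. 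Now the commutative $*$-algebra $\{\pi(f)|_{\cH^K}\}$ acting irreducibly on the Hilbert space $\cH^K$: by Schur's lemma (in the $C^*$-algebra form — a $*$-algebra of operators acting topologically irreducibly on a Hilbert space with commutant consisting of scalars) the commutant is $\C\,\mathrm{id}$, but since the algebra is commutative it lies in its own commutant, so every $\pi(f)|_{\cH^K}$ is a scalar. A Hilbert space on which a family of operators acts with each operator scalar can have an irreducible submodule only if it is one-dimensional, hence $\dim\cH^K\le 1$.

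The main obstacle I anticipate is the density/irreducibility step: carefully showing that a closed $\{\pi(f)|_{\cH^K}:f\in L^1(\bX)^K\}$-invariant subspace $W$ of $\cH^K$ generates, under $\pi(G)$, a subspace dense in $\cH$, and conversely that this forces $W$ to be dense in $\cH^K$. The delicate point is relating the full convolution algebra action $\pi(L^1(G))$ to the subalgebra action $\pi(L^1(\bX)^K)$ via the averaging projections $p_K$ on the left and $P$ on $\cH$; one needs the identity that for $v\in\cH^K$ and $f\in L^1(G)$ one has $P\pi(f)v=\pi(p_K f)v$ where $p_K f(x)=\int_K f(k^{-1}x)\,d\mu_K(k)\in L^1(\bX)^K$. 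Once this intertwining is in place the argument is routine, and indeed this is exactly the content of the cited Proposition 6.3.1 in \cite{Dijk2009}, so in the paper it suffices to invoke that reference; the sketch above records why it is true.
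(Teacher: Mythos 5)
Your proof is correct and is essentially the paper's own route: the paper does not argue this lemma at all but simply cites Proposition 6.3.1 of \cite{Dijk2009}, and your Schur-lemma argument for the commutative $*$-algebra $L^1(\bX)^K$ acting topologically irreducibly on $\cH^K$ is precisely the standard proof behind that citation. The only point to tidy is that for a general $f\in L^1(G)$ the one-sided average $p_Kf$ is merely left $K$-invariant, so the intertwining relation to use is $P\pi(f)P=\pi(f^{\natural})$ with $f^{\natural}$ the two-sided $K$-average (which is bi-$K$-invariant and hence lies in $L^1(\bX)^K$); on $\cH^K$ this is exactly what your insertion of $P$ supplies.
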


\begin{corollary} Let $(\pi, \cH)$ be a irreducible unitary
  representation of $G$ such that $\cH^K\not=\{0\}$. Then there exists a
  $*$-homomorphism $\chi_\pi : L^1(\bX)^K\to \C$ such that
  \[\pi (f)u= \chi_\pi (f)u\] for all $u\in \cH^K$.
\end{corollary}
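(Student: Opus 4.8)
The plan is to exploit Schur's lemma together with the previous lemma, which already tells us that $\pi(f)$ maps $\cH$ into the one-dimensional space $\cH^K$. First I would restrict attention to $\cH^K$: fix a unit vector $u_0 \in \cH^K$, so that $\cH^K = \C u_0$. For $f \in L^1(\bX)^K$ the previous lemma gives $\pi(f)u_0 \in \cH^K$, hence there is a scalar, call it $\chi_\pi(f)$, with $\pi(f)u_0 = \chi_\pi(f)\,u_0$. Linearity of $f \mapsto \pi(f)$ immediately yields that $\chi_\pi$ is linear, and $\|\chi_\pi(f)\| = \|\pi(f)u_0\| \le \|\pi(f)\| \le \|f\|_1$ shows it is continuous. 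Since $\dim\cH^K = 1$ is independent of the chosen unit vector, the same scalar works for every $u \in \cH^K$, which is the displayed identity $\pi(f)u = \chi_\pi(f)u$.

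Next I would check that $\chi_\pi$ is a $*$-homomorphism. For multiplicativity, note that for $f, g \in L^1(\bX)^K$ we have $\pi(f*g) = \pi(f)\pi(g)$ from the general properties of the integrated representation recorded in Section~\ref{S:1}. Applying both sides to $u_0$ and using that $\pi(g)u_0 = \chi_\pi(g)u_0 \in \cH^K$, we get
\[
\chi_\pi(f*g)u_0 = \pi(f)\pi(g)u_0 = \chi_\pi(g)\,\pi(f)u_0 = \chi_\pi(g)\chi_\pi(f)u_0,
\]
so $\chi_\pi(f*g) = \chi_\pi(f)\chi_\pi(g)$ (here commutativity of $L^1(\bX)^K$ is not even needed, though it is available). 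For the $*$-property, recall $\pi(f^*) = \pi(f)^*$, so
\[
\chi_\pi(f^*) = (\pi(f^*)u_0, u_0) = (\pi(f)^*u_0,u_0) = \overline{(\pi(f)u_0,u_0)} = \overline{\chi_\pi(f)}.
\]
The one point needing a word of care is that $\pi(g)u_0$ genuinely lands in $\cH^K$ so that $\pi(f)$ acts on it as the scalar $\chi_\pi(f)$; this is exactly the content of the preceding lemma applied to $g \in L^1(\bX)^K$, so there is no real obstacle here. I would close by remarking that $\chi_\pi$ is nonzero: by Lemma~\ref{le-pifHK} one can choose $f_j \in L^1(\bX)^K$ with $\pi(f_j)u_0 \to u_0$ (apply $p_K$ to an approximate identity to stay $K$-invariant), forcing $\chi_\pi(f_j) \to 1$. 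The main subtlety, such as it is, is bookkeeping around the normalization and $K$-invariance of the approximate identity; the core argument is a routine application of Schur's lemma in the guise of $\dim\cH^K \le 1$.
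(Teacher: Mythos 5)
Your proposal is correct and follows essentially the same route as the paper: use the preceding lemma to see that $\pi(f)$ maps $\cH$ (in particular $e_\pi$) into the one-dimensional space $\cH^K=\C e_\pi$, define $\chi_\pi(f)=(\pi(f)e_\pi,e_\pi)$, and read off the $*$-homomorphism properties from $\pi(f*g)=\pi(f)\pi(g)$ and $\pi(f^*)=\pi(f)^*$. Your added verifications (multiplicativity, the $*$-identity, continuity, nonvanishing via a $K$-averaged approximate identity) are details the paper leaves implicit, but the argument is the same.
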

\begin{proof} Let $e_\pi \in \cH^K$ be an unit vector. As $\dim
  \cH^K=1$ it follows that $\cH^K=\C e_\pi$. It follows from Lemma
  \ref{le-pifHK} that $\pi (f)e_\pi =(\pi (f)e_\pi, e_\pi )e_\pi$. The
  lemma follows now by defining $\chi_\pi (f):=(\pi
  (f)e_\pi,e_\pi)$.\end{proof}

Using the heat-kernel one can show that $\cH^K\subset \cH^\omega$ but
the following is enough for us.

\begin{theorem} $\cH^K\subseteq \cH^\infty$.
\end{theorem}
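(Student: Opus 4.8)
The plan is to reduce the statement to two facts already at hand: that $\pi(f)\cH\subseteq\cH^\infty$ whenever $f\in C_c^\infty(G)$, and that (for the irreducible $\pi$ under consideration) the operators $\pi(f)$ with $f\in L^1(\bX)^K$ act on $\cH^K$ by the scalar $\chi_\pi(f)$, as in the Corollary above. If $\cH^K=\{0\}$ there is nothing to do, so I fix a unit vector $u\in\cH^K$, which, since $\dim\cH^K\le 1$, spans $\cH^K$. The idea is to approximate $u$ by vectors $\pi(\tilde f_j)u$ for suitable \emph{smooth} $K$-bi-invariant $\tilde f_j$; since $\tilde f_j\in L^1(\bX)^K$, each $\pi(\tilde f_j)u$ is simultaneously a scalar multiple of $u$ and a smooth vector, and this will force $u$ itself to be smooth.

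Concretely, I would start from an approximate identity $f_j\in C_c^\infty(G)$ as in Lemma~\ref{le-pifHK} (so $f_j\ge 0$, $\|f_j\|_1=1$, $\supp f_j\subset U_j$ with $\bigcap_j U_j=\{e\}$, and $\pi(f_j)v\to v$ for all $v\in\cH$), and bi-invariantize it:
\[
\tilde f_j(x)=\int_K\int_K f_j(k_1 x k_2)\,d\mu_K(k_1)\,d\mu_K(k_2).
\]
Using that $G$ is unimodular and $\mu_K(K)=1$, one checks that $\tilde f_j$ is again a nonnegative element of $C_c^\infty(G)$ with $\|\tilde f_j\|_1=1$, and it is $K$-bi-invariant, so $\tilde f_j\in L^1(\bX)^K$ and, being smooth and compactly supported, $\pi(\tilde f_j)\cH\subseteq\cH^\infty$. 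Writing $P_K=\int_K\pi(k)\,d\mu_K(k)$ for the orthogonal projection onto $\cH^K$, a direct computation using the substitution $x\mapsto k_1^{-1}xk_2^{-1}$ and the invariance of Haar measure on $K$ gives $\pi(\tilde f_j)=P_K\,\pi(f_j)\,P_K$; hence $\pi(\tilde f_j)u=P_K\pi(f_j)u\to P_K u=u$ by Lemma~\ref{le-pifHK} and continuity of $P_K$. On the other hand $\pi(\tilde f_j)u=\chi_\pi(\tilde f_j)\,u$ by the Corollary, so for $j$ large $\chi_\pi(\tilde f_j)\ne 0$ and $u=\chi_\pi(\tilde f_j)^{-1}\pi(\tilde f_j)u\in\cH^\infty$. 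Since $\cH^K=\C u$, this yields $\cH^K\subseteq\cH^\infty$.

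The point requiring care — and the reason the one-dimensionality of $\cH^K$ is essential — is that smoothness of vectors is not preserved under norm limits, so the mere convergence $\pi(\tilde f_j)u\to u$ of smooth vectors would not by itself give $u\in\cH^\infty$. It is the identity $\pi(\tilde f_j)u=\chi_\pi(\tilde f_j)u$, which pins $\pi(\tilde f_j)u$ down to a nonzero scalar times $u$ for large $j$, that actually closes the argument (and, correspondingly, the theorem is false for general reducible $\pi$). A secondary, routine check is that the bi-invariantized $\tilde f_j$ still behave like an approximate identity \emph{on $\cH^K$}; this is precisely the content of $\pi(\tilde f_j)u=P_K\pi(f_j)u\to u$, and I should note that the $\tilde f_j$ need not be supported in neighborhoods shrinking to $e$, so Lemma~\ref{le-pifHK} must be applied to the $f_j$ rather than to the $\tilde f_j$.
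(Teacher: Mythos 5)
Your argument is correct and is essentially the paper's own proof: you $K$-average a smooth, compactly supported (approximate identity) function to get a bi-invariant element of $C_c^\infty(G)\cap L^1(\bX)^K$ which, by the Corollary, acts on $\cH^K$ by a nonzero scalar, so the $K$-fixed unit vector is a scalar multiple of a vector in $\pi\bigl(C_c^\infty(G)\bigr)\cH\subseteq\cH^\infty$. The only cosmetic difference is that the paper fixes a single $f$ with $(\pi(f)e_\pi,e_\pi)\neq 0$ (via Lemma~\ref{le-pifHK}) and averages it, whereas you bi-invariantize the whole sequence $\tilde f_j$ and argue that the scalar $\chi_\pi(\tilde f_j)\to 1$ is eventually nonzero.
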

\begin{proof} It is enough to show that $e_\pi \in \cH^\infty$. For
  that let $f\in C_c^\infty (\bX)$ be so that $(\pi (f)e_\pi, e_\pi )
  \not= 0$. This is possible by Lemma \ref{le-pifHK}. Let
  \[h(x)=\int_K f(kx)\, d\mu_K(k)\, .\] Then
  \[\chi_\pi (f)= (\pi (h)e_\pi , e_\pi )=(\pi (f)e_\pi ,e_\pi )\not=
  0\, .\] Hence
  \[ e_\pi =\frac{1}{\chi_\pi (f)} \pi (h)e_\pi \in \cH^K\cap
  \cH^\infty \, .\qedhere \]
\end{proof}
\begin{theorem} Let $(\pi ,\cH)$ be an irreducible spherical
  representation of $G$ and $e_\pi \in \cH^K$ a unit vector. Then the
  function
  \[\varphi_\pi (x):= (e_\pi ,\pi_\pi (x)e_\pi )\] is a positive
  definite spherical function. If $\varphi $ is a positive definite
  spherical function on $G$, then there exists an irreducible unitary
  representation $(\pi ,\cH)$ of $G$ such that $\dim \cH^K=1$ and
  $\varphi =\varphi_\pi$.
\end{theorem}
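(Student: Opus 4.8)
The plan is to prove the two implications separately, relying on the apparatus assembled in Section~\ref{S:6}.

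\emph{From a representation to a spherical function.} Let $(\pi,\cH)$ be irreducible and spherical and $e_\pi\in\cH^K$ a unit vector. I would first record that $\varphi_\pi$ is bounded by $1$, continuous by strong continuity of $\pi$, and $K$-bi-invariant because $\pi(k)e_\pi=e_\pi$ for $k\in K$; hence $\varphi_\pi\in L^\infty(\bX)^K$, and it is independent of the choice of $e_\pi$ (which is pinned down up to a unimodular scalar since $\dim\cH^K\le 1$). Positive definiteness is the usual matrix-coefficient computation: for $c_i\in\C$ and $x_i\in G$,
\[
  \sum_{i,j}c_i\overline{c_j}\,\varphi_\pi(x_i^{-1}x_j)
  =\sum_{i,j}c_i\overline{c_j}\,\ip{\pi(x_i)e_\pi}{\pi(x_j)e_\pi}
  =\Big\|\sum_i c_i\,\pi(x_i)e_\pi\Big\|^2\ge 0,
\]
using $\ip{e_\pi}{\pi(x_i^{-1}x_j)e_\pi}=\ip{\pi(x_i)e_\pi}{\pi(x_j)e_\pi}$. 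To see that $\varphi_\pi$ is spherical I would verify condition~(a) of Theorem~\ref{th-sfct}: for $f\in L^1(\bX)^K$,
\[
  \widehat{f}(\varphi_\pi)=\int_G f(x)\overline{\varphi_\pi(x)}\,d\mu_G(x)
  =\int_G f(x)\,\ip{\pi(x)e_\pi}{e_\pi}\,d\mu_G(x)=\ip{\pi(f)e_\pi}{e_\pi}=\chi_\pi(f),
\]
and $\chi_\pi$ is a $*$-homomorphism of $L^1(\bX)^K$ by the Corollary in Section~\ref{S:6}. Thus $f\mapsto\widehat{f}(\varphi_\pi)$ is a $*$-homomorphism, so $\varphi_\pi$ is a spherical function, and by the last assertion of Theorem~\ref{th-sfct} it is positive definite.

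\emph{From a spherical function to a representation.} Given a positive definite spherical $\varphi$, I would invoke the GNS construction — applied to $\overline{\varphi}$, which is again a positive definite spherical function, so as to match the conjugation in $\varphi_\pi(x)=\ip{e_\pi}{\pi(x)e_\pi}$ — to obtain a cyclic unitary representation $(\pi_\varphi,\cH_\varphi)$ with cyclic unit vector $e_\pi$ satisfying $\varphi(x)=\ip{e_\pi}{\pi_\varphi(x)e_\pi}$. Since $\varphi$ is $K$-bi-invariant, $\varphi(k)=\varphi(e)=1$ for $k\in K$, so $\|\pi_\varphi(k)e_\pi-e_\pi\|^2=2-2\RE\varphi(k)=0$ and hence $e_\pi\in\cH_\varphi^K$. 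Let $P=\int_K\pi_\varphi(k)\,d\mu_K(k)$, the orthogonal projection onto $\cH_\varphi^K$. Combining the standard averaging identity $P\pi_\varphi(f)P=\pi_\varphi(f^{\natural})$, where $f^{\natural}(x)=\int_K\int_K f(k_1xk_2)\,d\mu_K(k_1)\,d\mu_K(k_2)\in L^1(\bX)^K$, with the cyclicity of $e_\pi$ in the $L^1(G)$-sense (Lemma~\ref{le-pifHK}) and $Pe_\pi=e_\pi$, I obtain $\cH_\varphi^K=\overline{\pi_\varphi(L^1(\bX)^K)e_\pi}$. Now $f\mapsto\ip{\pi_\varphi(f)e_\pi}{e_\pi}=\widehat{f}(\varphi)=:\chi(f)$ is a $*$-homomorphism of the commutative algebra $L^1(\bX)^K$ (this is where Theorem~\ref{th-sfct} and the positive-definiteness of $\varphi$ enter), and, $L^1(\bX)^K$ being a $*$-subalgebra of $L^1(G)$, one gets $\|\pi_\varphi(f)e_\pi-\chi(f)e_\pi\|^2=\chi(f^{*}*f)-|\chi(f)|^2=0$; hence $\pi_\varphi(f)e_\pi=\chi(f)e_\pi$ for every $f\in L^1(\bX)^K$, so that $\cH_\varphi^K=\C e_\pi$ is one-dimensional.

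It remains to see that $\pi_\varphi$ is irreducible, which I would do by showing the commutant $\pi_\varphi(G)'$ is trivial: any $T\in\pi_\varphi(G)'$ commutes with each $\pi_\varphi(k)$, hence with $P$, so $Te_\pi=TPe_\pi=PTe_\pi\in\cH_\varphi^K=\C e_\pi$, say $Te_\pi=\lambda e_\pi$; then $T\pi_\varphi(x)e_\pi=\pi_\varphi(x)Te_\pi=\lambda\pi_\varphi(x)e_\pi$ for all $x\in G$, and by cyclicity $T=\lambda I$. Therefore $(\pi_\varphi,\cH_\varphi)$ is irreducible with $\dim\cH_\varphi^K=1$, and $\varphi=\varphi_{\pi_\varphi}$ by construction. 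I expect the passage $\dim\cH_\varphi^K=1$ to be the main obstacle: it is precisely the point at which one uses that $\varphi$ is \emph{spherical} (and not merely positive definite), through the multiplicativity of the vector state $f\mapsto\ip{\pi_\varphi(f)e_\pi}{e_\pi}$ on $L^1(\bX)^K$, which in turn rests on the commutativity of that algebra; the rest is routine bookkeeping with conjugations and with averaging over $K$.
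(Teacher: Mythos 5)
Your proposal is correct, and it follows the same broad skeleton as the paper (matrix-coefficient computation for positive definiteness, GNS for the converse), but it differs in the details and in fact goes further. For the first direction the paper verifies criterion (b) of Theorem \ref{th-sfct} directly, using the averaging identity $\int_K \pi(ky)e_\pi\,d\mu_K(k)=(\pi(y)e_\pi,e_\pi)e_\pi$ to get $\int_K\varphi_\pi(xky)\,d\mu_K(k)=\varphi_\pi(x)\varphi_\pi(y)$, whereas you verify criterion (a) by identifying $f\mapsto\widehat f(\varphi_\pi)$ with the character $\chi_\pi$ of $L^1(\bX)^K$ from the Corollary; both are legitimate, the paper's route being a touch more self-contained, yours making the link to the $*$-algebra picture explicit. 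For the converse the paper only sketches the concrete GNS model built on left translates of $\varphi$ and stops at producing $(\pi_\varphi,\cH)$, a $K$-fixed cyclic vector $e$ with $(e,\pi_\varphi(x)e)=\varphi(x)$; it does not actually prove irreducibility or $\dim\cH^K=1$. You supply exactly these missing pieces: $\dim\cH_\varphi^K=1$ from the multiplicativity of the vector state $f\mapsto(\pi_\varphi(f)e_\pi,e_\pi)=\widehat f(\varphi)$ on the commutative $*$-algebra $L^1(\bX)^K$ (via $\|\pi_\varphi(f)e_\pi-\chi(f)e_\pi\|^2=\chi(f^**f)-|\chi(f)|^2=0$, the averaging identity $P\pi_\varphi(f)P=\pi_\varphi(f^\natural)$ and Lemma \ref{le-pifHK}), and irreducibility by the commutant argument ($T\in\pi_\varphi(G)'$ commutes with $P$, so $Te_\pi\in\C e_\pi$, and cyclicity gives $T=\lambda I$). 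You also correctly locate where sphericity, as opposed to mere positive definiteness, is used. The conjugation bookkeeping (applying GNS to $\overline\varphi$, which is again positive definite and spherical, to match $\varphi_\pi(x)=(e_\pi,\pi(x)e_\pi)$) is sound; the paper avoids it by its explicit choice of inner product on the span of the $\ell(x)\varphi$. In short: no gaps, and your write-up is more complete than the paper's sketch at precisely the points (irreducibility, one-dimensionality of $\cH^K$) the statement asserts.
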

\begin{proof} Here are the main ideas of the proof. First we note that
  \[\int_K \pi (ky)e_\pi \, d\mu_K (k) =(\pi (y)e_\pi, e_\pi )e_\pi\,
  .\] Hence
  \begin{eqnarray*} \int_K \varphi_\pi (xky)\, d\mu_K (k) &=& (\pi
    (x^{-1})e_\pi ,\int_K \pi (ky)e_\pi )\, d\mu_K(k)\\ &=& (\pi
    (x^{-1})e_\pi , (\pi (y)e_\pi ,e_\pi )e_\pi)\\ &=& (e_\pi, \pi
    (x)e_\pi) (e_\pi, \pi (y)e_\pi)\\ &=&\varphi_\pi (x)\varphi_\pi
    (y)\, .
  \end{eqnarray*} Hence $\varphi_\pi$ is a spherical function. It is
  positive definite because
  \[\sum_{i,j=1}^N c_i\overline{c_j}\varphi_\pi
  (x_i^{-1}x_j)=\|\sum_{i=1}^N c_i \pi (x_i)e_\pi \|^2\ge 0\, .\qedhere\]
\end{proof}
\begin{theorem} Let $\varphi : G\to \C$ be a positive definite
  function. Then $\varphi\in \Psp$ if and only if there exists a
  irreducible spherical unitary representation $(\pi,\cH)$ and $e_\pi
  \in \cH^K$, $\|e_\pi\|=1$ such that
  \[\varphi (g)=(e_\pi ,\pi (g)e_\pi)\, .\]
\end{theorem}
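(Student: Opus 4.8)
The plan is to prove the two implications separately, getting the ``if'' direction essentially for free from the preceding theorem and doing the real work in the ``only if'' direction via the GNS construction.

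\emph{The ``if'' direction.} Suppose $\varphi(g)=(e_\pi,\pi(g)e_\pi)$ for an irreducible spherical unitary representation $(\pi,\cH)$ and a unit vector $e_\pi\in\cH^K$. Then $\varphi$ is a diagonal matrix coefficient of a unitary representation, hence continuous and positive definite, with $\varphi(e)=\norm{e_\pi}^2=1$, and $K$-bi-invariance is immediate from $\pi(k)e_\pi=e_\pi$. Since the functional equation of Theorem \ref{th-sfct}(b) was verified for exactly such $\varphi$ in the proof of the preceding theorem, $\varphi=\varphi_\pi\in\Psp$; there is nothing new to do here.

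\emph{The ``only if'' direction.} Let $\varphi\in\Psp$. First I would invoke the GNS construction for the positive definite function $\varphi$ to obtain a continuous cyclic unitary representation $(\pi_\varphi,\cH_\varphi,\xi)$ with $(\xi,\pi_\varphi(g)\xi)=\varphi(g)$ and $\norm{\xi}^2=\varphi(e)=1$. Next I would verify that $\xi\in\cH_\varphi^K$: since $\varphi$ is $K$-bi-invariant, $\varphi(k)=\varphi(k^{-1})=1$ for $k\in K$, so $\norm{\pi_\varphi(k)\xi-\xi}^2=2-\varphi(k)-\varphi(k^{-1})=0$. Thus $\pi_\varphi$ is spherical with $K$-fixed unit vector $\xi$ and $\varphi=\varphi_{\pi_\varphi}$, and it only remains to show that $\pi_\varphi$ is irreducible.

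The key point — and the step I expect to be the main obstacle — is to prove $\dim\cH_\varphi^K=1$, using that $f\mapsto\widehat{f}(\varphi)$ is a $*$-homomorphism of $L^1(\bX)^K$ (which holds precisely because $\varphi$ is positive definite, by the last assertion of Theorem \ref{th-sfct}). Let $P=\int_K\pi_\varphi(k)\,d\mu_K(k)$ be the orthogonal projection onto $\cH_\varphi^K$. Using the identities $\pi_\varphi(k)\pi_\varphi(f)=\pi_\varphi(\ell(k)f)$ and $\pi_\varphi(f)\xi=\pi_\varphi(\rho(k^{-1})f)\xi$ (the latter from $\pi_\varphi(k)\xi=\xi$) together with cyclicity of $\xi$, one reduces to bi-invariant functions and obtains $\cH_\varphi^K=\overline{\pi_\varphi(L^1(\bX)^K)\xi}$. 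For $f,h\in L^1(\bX)^K$ (note $h^{*}*f\in L^1(\bX)^K$) one then computes, using $(\pi_\varphi(g)\xi,\xi)=\overline{\varphi(g)}$ and the homomorphism property,
\[(\pi_\varphi(f)\xi,\pi_\varphi(h)\xi)=(\pi_\varphi(h^{*}*f)\xi,\xi)=\widehat{h^{*}*f}(\varphi)=\overline{\widehat{h}(\varphi)}\,\widehat{f}(\varphi)=(\widehat{f}(\varphi)\xi,\widehat{h}(\varphi)\xi).\]
Hence $\pi_\varphi(f)\xi\mapsto\widehat{f}(\varphi)\xi$ is a well-defined isometry of $\cH_\varphi^K=\overline{\pi_\varphi(L^1(\bX)^K)\xi}$ onto a closed subspace of $\C\xi$, which is nonzero since $\xi\neq0$; therefore $\cH_\varphi^K=\C\xi$. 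Finally I would rule out reducibility: if $\cH_\varphi=\cH_1\oplus\cH_2$ with both summands nonzero and $G$-invariant, write $\xi=\xi_1+\xi_2$ with $\xi_i\in\cH_i$; cyclicity of $\xi$ forces $\xi_1\neq0\neq\xi_2$, and each $\cH_i$ is $K$-invariant so $\xi_i=P\xi_i\in\cH_i^K\subseteq\cH_\varphi^K$ — two orthogonal nonzero vectors, contradicting $\dim\cH_\varphi^K=1$. Thus $\pi_\varphi$ is irreducible, completing the proof. (Alternatively one could simply cite the classical bijection between positive definite spherical functions and equivalence classes of irreducible spherical representations of a Gelfand pair, e.g. \cite{Dijk2009}; the GNS argument above merely fleshes out the sketch already given for the preceding theorem and keeps the treatment self-contained.)
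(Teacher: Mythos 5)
Your proposal is correct and follows essentially the same route as the paper: the ``if'' direction is delegated to the preceding theorem, and the ``only if'' direction is the GNS construction, which the paper carries out concretely by completing the span of the left translates $\ell(x)\varphi$ modulo null vectors while you invoke the abstract version --- both yield the same cyclic representation $(\pi_\varphi,\cH_\varphi,\xi)$ with $(\xi,\pi_\varphi(g)\xi)=\varphi(g)$ and $\xi\in\cH_\varphi^K$. The only substantive difference is completeness: the paper's printed proof stops after exhibiting the $K$-fixed cyclic unit vector with matrix coefficient $\varphi$ and never verifies irreducibility, whereas your argument that $\cH_\varphi^K=\overline{\pi_\varphi(L^1(\bX)^K)\xi}$ is one-dimensional (via the $*$-homomorphism property of $f\mapsto\widehat{f}(\varphi)$ from Theorem \ref{th-sfct}) combined with cyclicity correctly supplies that missing step, and it is exactly the point where sphericity of $\varphi$, not just positive definiteness, is used.
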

\begin{proof}
  We have already seen one direction. The other direction
  follows by the classical Gelfand-Naimark-Segal construction. Assume
  that $\varphi $ is a positive definite function. Let $\cH$ denote
  the space of functions generated by linear combinations of
  $\ell(x)\varphi $, $x\in G$. Define
  \[\Big(
  \sum_{j=0}^N c_j\ell({x_j})\varphi
  ,\sum_{j=0}^Nd_j\ell({y_j})\varphi \Big)_0
  := \sum_{i,j}
  c_i\overline{d_j}\varphi (x_i^{-1}y_j)
  \]
  (By adding zeros we can
  always assume that the sum is taking over the same set of indices.)
  Then $(\,\, ,\,\,)_0$ is a positive semidefinite Hermitian form on
  $\cH_0$. Let $\cN:=\{\psi \in \cH_0\mid \|\psi \|_0=0\}$. Then $\cN$
  is $G$-invariant under left-translations and $G$ acts on $\cH_0/\cN$
  by left-translation. $(\, \, ,\,\, )_0$ defines an inner product on
  $\cH/\cN$ by $(f+\cN, g+\cN):= (f,g)$. Let $\cH$ be the completion
  of $\cH_0/\cN$ with respect with the metric given by $(\,\,
  ,\,\,)$. Then $\cH$ is a Hilbert space and the left translation on
  $\cH_0$ induces a unitary representation $\pi_\varphi$ on $\cH$. If
  $e$ is the equivalence class of $\varphi \in \cH$, then as $\varphi$
  is $K$-invariant and $\|\varphi \|_0=1$, $e\in \cH^K\setminus \{0\}$
  and $(e,\pi_\varphi (x)e)=\varphi (x)$.
\end{proof}

For $\lambda\in\Lambda_1$ and $\varphi=\varphi_\lambda$ we denote the
corresponding representation by $(\pi_\lambda ,\cH_\lambda)$. We fix
once and for all a unit vector $e_\lambda\in \cH_\lambda^K$. Let
$\pr_\lambda =\int_K \pi_\lambda (k)\, dk$. Then $\pr_\lambda$ is the
orthogonal projection $\cH_\lambda \to \cH_\lambda^K$, $\pr_\lambda
(u)=(u,e_\lambda )e_\lambda$. Let $f\in L^1(\bX)$. Then, as $f$ is
right $K$-invariant, we get $\pi_\lambda (f)=\pi_\lambda (f)\circ
\pr_\lambda$. It therefore make sense to define a vector valued
Fourier transform by
\[\widetilde {f}(\lambda ):=\pi_\lambda (f)e_\lambda\] see
\cite{os2}. We note that if $f$ is $K$-invariant, then
$\widehat{f}(\lambda )=(\widetilde{f}(\lambda ),e_\lambda)$.

If $f\in L^1(\bX)\cap L^2(\bX)$ then
\[\widetilde {\ell(x) f}=\pi_\lambda (x)\widetilde {f}(\lambda)\quad \text{ and }\quad \Tr (\pi_\lambda (f))=(\pi_\lambda (f)e_\lambda ,e_\lambda)\, . \]
Let $g=\int f^**f (kx)\, d\mu_K$. Then $g$ is $K$-binvariant and
$\widetilde{g}(\lambda )=(\pi_\lambda (f^*)\pi_\lambda (f)e_\pi, e_\pi
)=\|\widetilde{f}(\lambda )\|^2$ which is integrable on
$\Lambda$. Finally
\[\|f\|^2= f^**f(e)= g(e)= \int_\Lambda
  \widehat{g}(\lambda) \, d\mPh (\lambda )= \int_\Lambda
  \|\widetilde{f}(\lambda )\|^2 \, d\mPh (\lambda )\, .
\]
Finally, if $\lambda \mapsto (\widetilde{f}(\lambda
),e_\lambda)$ is integrable, then by the same argument as above
\[ f(x)=\ell({x^{-1}})f(e) =(\pi_\lambda (x^{-1})\widetilde{f}(\lambda
),e_\lambda)\, d\mPh (\lambda ) =\int_\Lambda (\widehat{f}(\lambda ),
\pi_\lambda (x)e_\lambda )\, d\mPh (\lambda )\, .\] Thus we have
proved the following theorem:
\begin{theorem} The vector valued Fourier transform defines a unitary
  $G$-isomorphism
  \[L^2(\bX )\simeq \int^\oplus (\pi_\lambda ,\cH_\lambda )\, d\mPh\,
  .\] If $f\in L^2(\bX)$ is so that $\lambda \mapsto
  \|\widehat{f}(\lambda )\|$ is integrable, then
  \[f(x)=\int_\Lambda (\widehat{f}(\lambda ),\pi_\lambda (x)e_\lambda
  )\, d\mPh\, .\]
\end{theorem}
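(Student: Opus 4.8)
The plan is to assemble the pieces already laid out in Section~\ref{S:6} into the Plancherel/direct-integral statement. First I would verify the three intermediate identities displayed just above the theorem, which do the real work. The relation $\widetilde{\ell(x)f}(\lambda)=\pi_\lambda(x)\widetilde{f}(\lambda)$ follows from $\pi_\lambda(\ell(x)f)=\pi_\lambda(x)\pi_\lambda(f)$ together with $\pi_\lambda(f)=\pi_\lambda(f)\circ\pr_\lambda$ and $\pr_\lambda u=(u,e_\lambda)e_\lambda$. The identity $\tr(\pi_\lambda(f))=(\pi_\lambda(f)e_\lambda,e_\lambda)$ is immediate since $\pi_\lambda(f)$ factors through the one-dimensional space $\cH_\lambda^K$. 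For the key computation one sets, for $f\in L^1(\bX)\cap L^2(\bX)$, the $K$-biinvariant function $g(x)=\int_K f^**f(kx)\,d\mu_K(k)$ and checks $\widetilde{g}(\lambda)=\pi_\lambda(f^*)\pi_\lambda(f)e_\lambda=\pi_\lambda(f)^*\pi_\lambda(f)e_\lambda$, whose inner product with $e_\lambda$ is $\|\widetilde{f}(\lambda)\|^2$; thus $\widehat{g}(\lambda)=\|\widetilde{f}(\lambda)\|^2$, which is nonnegative, and since $g\in L^1(\bX)^K$ with $\widehat g\ge 0$ it lies in $L^1(\Lambda,d\mPh)$ so that Theorem~\ref{th-PlancherelSpherical1}(c) applies pointwise at $e$.

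Next I would run the norm identity
\begin{equation*}
\|f\|_2^2 = (f^**f)(e) = g(e) = \int_\Lambda \widehat{g}(\lambda)\,d\mPh(\lambda) = \int_\Lambda \|\widetilde{f}(\lambda)\|^2\,d\mPh(\lambda),
\end{equation*}
using that $(f^**f)(e)=\|f\|_2^2$ (valid as $G$ is unimodular) and that integrating $f^**f$ over $K$ does not change its value at $e$. This shows $f\mapsto\widetilde f$ is an isometry from the dense subspace $L^1(\bX)\cap L^2(\bX)$ of $L^2(\bX)$ into $\int^\oplus\cH_\lambda\,d\mPh$; it extends to an isometry of all of $L^2(\bX)$. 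The intertwining property $\widetilde{\ell(x)f}(\lambda)=\pi_\lambda(x)\widetilde f(\lambda)$ makes this map $G$-equivariant. For surjectivity I would argue that the image is a closed $G$-invariant subspace of the direct integral whose fibrewise components include all the vectors $\pi_\lambda(f)e_\lambda$; since the $e_\lambda$ are cyclic for $\pi_\lambda$ (they generate $\cH_\lambda$ by the GNS construction, or by irreducibility together with $e_\lambda\ne 0$) and $\pi_\lambda(L^1(\bX))e_\lambda$ is dense in $\cH_\lambda$, a standard direct-integral density argument forces the image to be everything. Finally, the inversion formula comes from the same pointwise identity: for $f$ with $\lambda\mapsto\|\widetilde f(\lambda)\|$ integrable, one writes $f(x)=\ell(x^{-1})f(e)$, applies the inversion at $e$ to $\ell(x^{-1})f$, and uses $\widetilde{\ell(x^{-1})f}(\lambda)=\pi_\lambda(x^{-1})\widetilde f(\lambda)$ together with unitarity of $\pi_\lambda$ to get $f(x)=\int_\Lambda(\widetilde f(\lambda),\pi_\lambda(x)e_\lambda)\,d\mPh(\lambda)$.

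The main obstacle is surjectivity of the vector-valued Fourier transform onto the direct integral: the norm identity only gives that the map is an isometry onto a closed subspace, and one must show nothing is missed fibrewise. The clean way is to invoke that $L^2(\bX)^K\to L^2(\Lambda,d\mPh)$ is already onto by Theorem~\ref{th-PlancherelSpherical1}(b), which handles the $K$-fixed part $(\widetilde f(\lambda),e_\lambda)=\widehat f(\lambda)$ of each fibre, and then use $G$-equivariance: the image is $G$-invariant, it contains the $K$-fixed section in each fibre, and applying $\pi_\lambda(G)$ (equivalently, translating $f$) sweeps out all of $\cH_\lambda$ by cyclicity of $e_\lambda$. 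One subtlety to watch is measurability/integrability of the relevant sections so that the direct-integral decomposition is well defined; this is where one uses that $g\in L^1(\bX)^K$ with $\widehat g\ge 0$, so $\widehat g\in L^1(d\mPh)$, and more generally that $\lambda\mapsto\|\widetilde f(\lambda)\|^2$ is measurable as a limit of spherical Fourier transforms of the approximants. The remaining steps are bookkeeping with the already-established spherical Plancherel theorem.
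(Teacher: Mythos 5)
Your proposal follows essentially the same route as the paper: the same intermediate identities (equivariance of $f\mapsto\widetilde f$ and the trace formula), the same key computation with the $K$-average $g$ of $f^*\ast f$ combined with the spherical Plancherel theorem evaluated at $e$ to obtain the isometry on $L^1(\bX)\cap L^2(\bX)$, and the same derivation of the pointwise inversion formula from $f(x)=\ell(x^{-1})f(e)$. The only addition is your sketch of surjectivity (ontoness of the spherical transform on the $K$-invariant part plus $G$-equivariance and cyclicity of $e_\lambda$), a point the paper passes over without comment; this is the standard argument and is consistent with the paper's treatment.
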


\section{The Space of Bandlimited Functions}\label{S:7}
\noindent
As before $(G,K)$ denotes a Gelfand pair with $G$ connected and
$\bX=G/K$ the corresponding commutative space. In this section we
introduce the space of bandlimited functions and prove a sampling
theorem for the spaces $L^2_\Omega (\bX)$ of $\Omega$-bandlimited
functions on $\bX$.

\begin{definition} Suppose $\Omega \subset \Lambda$ be compact.  We
  say that $f\in L^2(\bX)$ is $\Omega$-\textit{bandlimited} if $\supp
  \widetilde{f}\subseteq \Omega$. $f$ is bandlimited if there exists
  $\Omega\subseteq \Lambda$ compact such that $f$ is
  $\Omega$-bandlimited.
\end{definition}

We denote by $L^2_\Omega (\bX)$ the space of $\Omega$-bandlimited
functions. As $\Omega $ will be fixed, we just say that $f$ is
bandlimited if $f\in L^2_\Omega (\bX)$. Let $\phi =\varphi_\Omega$ be
such that $\widetilde{\varphi} (\lambda) =\I_\Omega e_\lambda$. As
$\Omega$ is compact it follows that $\phi\in L^2_\Omega
(\bX)$. However, $\I_\Omega$ is in general not integrable as $\lambda
\mapsto \I_\Omega (\lambda )e_\lambda$ is not necessarily continuous.
\begin{lemma}\label{le-B1} We have
  \[\phi_\Omega (x) =\int_\Omega \varphi_\lambda (x)\, d\mPh (\lambda
  )\] is $K$-invariant and positive definite. In particular,
  $\phi_\Omega^*=\phi_\Omega $.
\end{lemma}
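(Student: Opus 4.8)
The plan is to establish the three claimed properties of $\phi_\Omega$ by tracking everything through the vector-valued Fourier transform developed in Section~\ref{S:6}. First I would observe that the candidate function $\phi_\Omega(x)=\int_\Omega \varphi_\lambda(x)\,d\mPh(\lambda)$ makes sense: the spherical functions $\varphi_\lambda$ are bounded by $1$ (being positive definite with $\varphi_\lambda(e)=1$), $\Omega$ is compact, and $\mPh$ is a Radon measure, so the integral converges absolutely for each $x$ and defines a bounded continuous function. I would also note that this is precisely the function whose vector-valued Fourier transform is $\I_\Omega e_\lambda$, since for $K$-invariant $f$ one has $\widehat f(\lambda)=(\widetilde f(\lambda),e_\lambda)$ and the inversion formula of Theorem~\ref{th-PlancherelSpherical1}(c) / the representation-theoretic inversion of the last theorem of Section~\ref{S:6} give back exactly this integral.

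Next, $K$-invariance: each $\varphi_\lambda$ is $K$-bi-invariant by Theorem~\ref{th-sfct}(c), hence in particular left $K$-invariant, so the integral $\phi_\Omega$ is left $K$-invariant and therefore descends to a function on $\bX$. For positive-definiteness I would use the representation-theoretic description: $\varphi_\lambda(x)=(e_\lambda,\pi_\lambda(x)e_\lambda)$, so for any $c_1,\dots,c_N\in\C$ and $x_1,\dots,x_N\in G$,
\[
\sum_{i,j} c_i\overline{c_j}\,\phi_\Omega(x_i^{-1}x_j)
=\int_\Omega \sum_{i,j} c_i\overline{c_j}(e_\lambda,\pi_\lambda(x_i^{-1}x_j)e_\lambda)\,d\mPh(\lambda)
=\int_\Omega \Big\|\sum_i \overline{c_i}\,\pi_\lambda(x_i)e_\lambda\Big\|^2 d\mPh(\lambda)\ge 0,
\]
where I used unitarity of $\pi_\lambda$ to rewrite $(e_\lambda,\pi_\lambda(x_i^{-1}x_j)e_\lambda)=(\pi_\lambda(x_i)e_\lambda,\pi_\lambda(x_j)e_\lambda)$ and positivity of $\mPh$. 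This also shows $\phi_\Omega$ is continuous (or one cites that positive-definite functions arising this way are continuous), so $\phi_\Omega\in\Psp$-type territory, and in any case $\phi_\Omega\in L^2_\Omega(\bX)$ as already remarked since its Fourier transform is supported in $\Omega$.

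Finally, $\phi_\Omega^*=\phi_\Omega$: recall $f^*=\overline{f^\vee}$ with $f^\vee(x)=m_G(x)^{-1}f(x^{-1})$, and $G$ is unimodular (Lemma in Section~\ref{S:4}), so $f^*(x)=\overline{f(x^{-1})}$. For a positive-definite function one has $\overline{\varphi(x^{-1})}=\varphi(x)$, hence $\phi_\Omega^*(x)=\overline{\phi_\Omega(x^{-1})}=\phi_\Omega(x)$. Alternatively, since the $D_j$ can be taken with real coefficients, $\overline\Lambda=\Lambda$ and $\overline{\varphi_\lambda}=\varphi_{\bar\lambda}$; combined with the invariance of $\mPh$ under $\lambda\mapsto\bar\lambda$ (which follows from $\mPh$ being the pushforward of a measure symmetric under complex conjugation of spherical functions, itself a consequence of the Plancherel formula being real), this gives $\overline{\phi_\Omega(x)}=\phi_\Omega(x)$ and together with $\varphi_\lambda(x^{-1})=\overline{\varphi_\lambda(x)}$ yields the claim.

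The main obstacle I anticipate is the justification that $\mPh$ is symmetric under $\lambda\mapsto\bar\lambda$ (equivalently, that $\phi_\Omega$ is real-valued and the definition is consistent), but this is handled cleanly by the positive-definiteness computation above, which sidesteps the measure-symmetry issue entirely: positive-definite functions are automatically $*$-invariant via $\overline{\varphi(x^{-1})}=\varphi(x)$, so once positive-definiteness is in hand, $\phi_\Omega^*=\phi_\Omega$ is immediate. Thus the real content is the interchange of sum and integral in the positive-definiteness estimate, which is justified by absolute convergence ($\Omega$ compact, integrand bounded), and the identification of $\widetilde{\phi_\Omega}$ with $\I_\Omega e_\lambda$, which is just the inversion formula.
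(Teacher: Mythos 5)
Your proof is correct and follows essentially the same route as the paper: the integrability bound $|\varphi_\lambda(x)|\le 1$ on the compact set $\Omega$ justifies applying the inversion formula to identify the function with $\widetilde{\phi_\Omega}=\I_\Omega e_\lambda$ with $\int_\Omega\varphi_\lambda(x)\,d\mPh(\lambda)$, and positive definiteness comes from exchanging the finite sum with the integral and using positive definiteness of each $\varphi_\lambda$ (you re-derive the norm-square identity, which the paper simply cites, and with the paper's convention of linearity in the first slot the square is $\bigl\|\sum_i c_i\pi_\lambda(x_i)e_\lambda\bigr\|^2$, a harmless convention point). Your explicit deductions of $K$-invariance from $K$-bi-invariance of the $\varphi_\lambda$ and of $\phi_\Omega^*=\phi_\Omega$ from unimodularity plus $\overline{\varphi(x^{-1})}=\varphi(x)$ for positive definite functions fill in steps the paper leaves implicit, and are exactly the intended ones.
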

\begin{proof}
  The function $\lambda\mapsto (\I_\Omega (\lambda )e_\lambda
  ,\pi_\lambda (x)e_\lambda)_\lambda$ is bounded by $|(e_\lambda
  ,\pi_\lambda(x)e_\lambda )_\lambda | \I_\Omega (\lambda)\le
  \I_\Omega (\lambda)$ and hence integrable.  Therefore Theorem
  \ref{th-PlancherelSpherical1} implies that
  \[\phi_\Omega (x )=\int_\Omega (e_\lambda ,\pi_\lambda (x)e_\lambda
  )_\lambda \, d\mPh (\lambda ) =\int_\Omega \varphi_\lambda (x)\,
  d\mPh (\lambda )\, .\]

  We have
  \[
  \sum_{i,j} c_i\overline{c_j}\phi (x_i^{-1}x_j)=\int_\Omega
  \sum_{i,j}c_i\overline{c_j}\varphi_\lambda (x_i^{-1}x_j)\, d\mPh
  (\lambda )\ge 0\] as the spherical functions $\varphi_\lambda$,
  $\lambda\in \Omega$, are positive definite.
\end{proof}

\begin{theorem} $L^2_\Omega (\bX)$ is a reproducing kernel Hilbert
  space with reproducing kernel $K(x,y)=\phi_\Omega
  (y^{-1}x)$. Furthermore, the orthogonal projection $L^2(G)\to
  L^2_\Omega (\bX)$ is given by $f\mapsto f*\phi_\Omega$.
\end{theorem}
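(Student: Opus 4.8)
The plan is to verify the three defining properties of a reproducing kernel Hilbert space of functions on $\bX$ (equivalently, $K$-right-invariant functions on $G$), using the vector valued Fourier transform and the Plancherel theorem established in Section \ref{S:6}. First I would record that $f\mapsto f*\phi_\Omega$ is the orthogonal projection onto $L^2_\Omega(\bX)$: on the Fourier side, convolution by $\phi_\Omega$ acts as $\widetilde{f}(\lambda)\mapsto \pi_\lambda(\phi_\Omega)\widetilde{f}(\lambda)$, and since $\widetilde{\phi}_\Omega(\lambda)=\I_\Omega(\lambda)e_\lambda$ together with $\pi_\lambda(f)=\pi_\lambda(f)\circ\pr_\lambda$ forces $\pi_\lambda(\phi_\Omega)$ to be $\I_\Omega(\lambda)$ times the orthogonal projection onto $\cH_\lambda^K=\C e_\lambda$, this operator is multiplication by the characteristic function $\I_\Omega$ on the direct integral $\int^\oplus \cH_\lambda\,d\mPh$. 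A multiplication operator by a $\{0,1\}$-valued function is an orthogonal projection, and its range is exactly the set of $f$ with $\supp\widetilde{f}\subseteq\Omega$, i.e. $L^2_\Omega(\bX)$. Since $\phi_\Omega$ is $K$-bi-invariant (Lemma \ref{le-B1}) the subspace $L^2_\Omega(\bX)$ is left $G$-invariant, and by Lemma \ref{le-B1} again $\phi_\Omega^*=\phi_\Omega$, so the hypotheses of Proposition \ref{pr-repKerHsp} are in place once we know point evaluations are bounded.

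Next I would establish the reproducing property and continuity. For $f\in L^2_\Omega(\bX)$ we have $f=f*\phi_\Omega$, and I claim $\widehat{\phi}_\Omega\in L^1(\Lambda,d\mPh)$ because $|\widehat{\phi}_\Omega(\lambda)|=|(e_\lambda,\pi_\lambda(x)e_\lambda)|\I_\Omega(\lambda)\le\I_\Omega(\lambda)$, which is integrable since $\Omega$ is compact and $\mPh$ is a (Radon, hence locally finite) measure; more precisely $\phi_\Omega\in L^2(\bX)^K$ and $\widehat{\phi}_\Omega=\I_\Omega\in L^1\cap L^2(\Lambda,d\mPh)$. Then for any $f\in L^2_\Omega(\bX)$ the function $\lambda\mapsto(\widehat{f}(\lambda),\pi_\lambda(x)e_\lambda)$ is supported in $\Omega$ and bounded in modulus by $\|\widehat{f}(\lambda)\|\I_\Omega(\lambda)$, so by Cauchy--Schwarz on $L^2(\Omega,d\mPh)$ it is integrable, and the pointwise inversion formula from the last theorem of Section \ref{S:6} gives
\begin{equation*}
  f(x)=\int_\Omega(\widehat{f}(\lambda),\pi_\lambda(x)e_\lambda)\,d\mPh(\lambda),
  \qquad
  |f(x)|\le\Big(\int_\Omega\|\widehat{f}(\lambda)\|^2\,d\mPh(\lambda)\Big)^{1/2}\mPh(\Omega)^{1/2}=\mPh(\Omega)^{1/2}\|f\|_2.
\end{equation*}
Thus point evaluation is bounded with constant $C_x=\mPh(\Omega)^{1/2}$ independent of $x$, so $L^2_\Omega(\bX)$ is a reproducing kernel Hilbert space.

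Finally I would identify the kernel. By Proposition \ref{pr-repKerHsp} (c), with $\phi=\phi_\Omega$, one has $f(x)=f*\phi_\Omega(x)=(f,\ell(x)\phi_\Omega)$ for all $f\in L^2_\Omega(\bX)$, so the reproducing kernel is $K(x,y)=\ell(y)\phi_\Omega(x)=\phi_\Omega(y^{-1}x)$; here I use $\overline{\phi_\Omega(x^{-1})}=\phi_\Omega(x)$, which holds since $\phi_\Omega$ is positive definite (Lemma \ref{le-B1}), so $\ell(y)\phi_\Omega$ is indeed the Riesz representative $g_y$ in $\cH$ and $K(x,y)=\overline{\langle g_x,g_y\rangle}$ reduces to $\phi_\Omega(y^{-1}x)$. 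The statement about the orthogonal projection $L^2(G)\to L^2_\Omega(\bX)$ being $f\mapsto f*\phi_\Omega$ is exactly the first paragraph together with Proposition \ref{pr-repKerHsp} (d). The main obstacle I anticipate is the bookkeeping needed to justify passing between $K$-invariant scalar statements (Theorem \ref{th-PlancherelSpherical1}) and the full vector valued picture for functions in $L^2_\Omega(\bX)$ that are not themselves $K$-invariant: one must check that $\pi_\lambda(\phi_\Omega)$ really is the scalar $\I_\Omega(\lambda)$ times $\pr_\lambda$ (using $\widetilde{\phi}_\Omega(\lambda)=\I_\Omega e_\lambda$ and $\pr_\lambda u=(u,e_\lambda)e_\lambda$), and that the direct integral decomposition intertwines convolution by $\phi_\Omega$ with multiplication by $\I_\Omega$; everything else is a routine application of Cauchy--Schwarz and the results already proved.
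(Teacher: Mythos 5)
Your overall route is the same as the paper's: bound point evaluation by Cauchy--Schwarz over the compact set $\Omega$, use the pointwise inversion formula of Section \ref{S:6} to write $f(x)=\int_\Omega(\widetilde f(\lambda),\pi_\lambda(x)e_\lambda)_\lambda\,d\mPh(\lambda)$, identify this with $f*\phi_\Omega(x)=(f,\ell(x)\phi_\Omega)$ via the vector valued Plancherel theorem and $\widetilde{\ell(x)\phi_\Omega}(\lambda)=\pi_\lambda(x)\I_\Omega(\lambda)e_\lambda$, and then invoke Proposition \ref{pr-repKerHsp} together with Lemma \ref{le-B1} for the kernel symmetry and the projection statement; the paper does exactly this, only in the reverse order (it proves the reproducing identity first and lets the proposition deliver the projection, whereas you prove the projection first).

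One formula in your first paragraph is wrong and, as written, would break the step it is supposed to justify: convolution by $\phi_\Omega$ does not act on the Fourier side as $\widetilde f(\lambda)\mapsto\pi_\lambda(\phi_\Omega)\widetilde f(\lambda)$. Since $\pi_\lambda(f*g)=\pi_\lambda(f)\pi_\lambda(g)$, the correct relation is $\widetilde{f*\phi_\Omega}(\lambda)=\pi_\lambda(f)\widetilde{\phi_\Omega}(\lambda)=\I_\Omega(\lambda)\widetilde f(\lambda)$; the kernel sits on the right and its transform gets hit by $\pi_\lambda(f)$. Your formula, combined with your (correct) identification $\pi_\lambda(\phi_\Omega)=\I_\Omega(\lambda)\pr_\lambda$, would make the operator $\widetilde f(\lambda)\mapsto\I_\Omega(\lambda)(\widetilde f(\lambda),e_\lambda)_\lambda\, e_\lambda$, whose range is only the $K$-invariant part of $L^2_\Omega(\bX)$; you would then not obtain $f=f*\phi_\Omega$ for general $f\in L^2_\Omega(\bX)$, which the rest of your argument needs. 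The conclusion you state (multiplication by $\I_\Omega$ on the direct integral, hence an orthogonal projection with range exactly $L^2_\Omega(\bX)$) is correct, but it must be reached through the correct convolution identity --- or, as the paper does, by avoiding operator-valued transforms of functions that are merely in $L^2$ and writing directly $f*\phi_\Omega(x)=(f,\ell(x)\phi_\Omega)=\int_\Lambda(\widetilde f(\lambda),\pi_\lambda(x)\I_\Omega(\lambda)e_\lambda)_\lambda\,d\mPh(\lambda)$; note in this connection that $\pi_\lambda(f)$ and $\pi_\lambda(\phi_\Omega)$ are a priori defined only for $L^1$-functions, so either argue on $L^1\cap L^2$ and extend by continuity, or use the Plancherel pairing as above. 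With this repair your proof is complete and coincides with the paper's.
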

\begin{proof} We have for $f\in L^1_\Omega (\bX)$
  \[|\int_\Lambda (\widetilde {f}(\lambda ),
    \pi_\lambda (x)e_\lambda )_\lambda \, d\mPh (\lambda )|\le
    \int_\Omega \|\widetilde{f}(\lambda )\|_\lambda\, d\mPh (\lambda
    )\le |\Omega |^{1/2}\|\widetilde{f}\|^2
  \]
where $|\Omega|$ denotes the volume $\int_\Omega \,
d\mPh$ of $\Omega$ which is finite as $\Omega$ is compact.  It follows
  that
  \begin{eqnarray*}\label{eq-fx} f(x)&=&\int_\Omega
    (\widetilde{f}(\lambda ), \pi_\lambda (x)e_\lambda)_\lambda \, d\mPh
    (\lambda )\\ &=& \int_\Lambda (\widetilde{f}(\lambda ),\pi_\lambda
    (x)\I_\Omega (\lambda)e_\lambda )_\lambda\, d\mPh (\lambda)\\ &=&
    \int_\bX f(y)\overline{ \ell(x) \phi_\Omega (y) }\, dy\\ &=&\int_\bX
    f (y)\phi_\Omega (y^{-1}x)\, dy\\ &=& f*\varphi_\Omega (x)\, .
  \end{eqnarray*}
  Thus $L^2_\Omega (\bX)$ is a reproducing kernel Hilbert space with
  reproducing kernel $K(x,y)=\phi_\Omega (y^{-1}x)$. The rest follows
  now from Proposition \ref{pr-repKerHsp}.
\end{proof}

Let us point out the following consequence of Proposition
\ref{pr-repKerHsp}:
\begin{corollary} Let $f\in L^2(G)$. Then $f\in L^2_\Omega (\bX)$ if
  and only if $f*\phi_\Omega =f$.
\end{corollary}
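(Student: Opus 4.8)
The plan is to read this off from the immediately preceding theorem together with Proposition~\ref{pr-repKerHsp}(d). The theorem just established identifies $L^2_\Omega(\bX)$ as a closed, left-invariant reproducing kernel subspace of $L^2(G)$ whose reproducing convolution kernel is $\phi_\Omega$, i.e.\ $f = f*\phi_\Omega$ for every $f\in L^2_\Omega(\bX)$, and the map $f\mapsto f*\phi_\Omega$ is the orthogonal projection $L^2(G)\to L^2_\Omega(\bX)$. Proposition~\ref{pr-repKerHsp}(d) says that for such a subspace one has exactly $\cH=\{f\in L^2(G)\mid f*\phi=f\}$. Applying this with $\cH=L^2_\Omega(\bX)$ and $\phi=\phi_\Omega$ yields the corollary verbatim.

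Concretely, I would argue the two inclusions directly. If $f\in L^2_\Omega(\bX)$, then the computation in the previous theorem's proof gives $f = f*\phi_\Omega$, which is one direction. Conversely, suppose $f\in L^2(G)$ satisfies $f*\phi_\Omega = f$. Since $\phi_\Omega$ is $K$-invariant (Lemma~\ref{le-B1}) and $\phi_\Omega\in L^2_\Omega(\bX)\subseteq L^2(\bX) = L^2(G)^{\rho(G)}$, convolution on the left by any $L^1$-approximate identity shows $f\in L^2(\bX)$ as well, so $f$ descends to a function on $\bX$; alternatively one simply notes that the range of the projection $g\mapsto g*\phi_\Omega$ is $L^2_\Omega(\bX)$, so $f = f*\phi_\Omega$ lies in $L^2_\Omega(\bX)$. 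To match the style of the proof of Proposition~\ref{pr-repKerHsp}, one can phrase it as: if $f*\phi_\Omega=f$ and $f\perp L^2_\Omega(\bX)$, then $f(x)=(f,\ell(x)\phi_\Omega)=0$ for all $x$ since $\ell(x)\phi_\Omega\in L^2_\Omega(\bX)$, hence $f=0$; therefore $\{f\in L^2(G)\mid f*\phi_\Omega=f\} = L^2(G)*\phi_\Omega = L^2_\Omega(\bX)$.

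There is essentially no obstacle here: the content is entirely contained in the preceding theorem and Proposition~\ref{pr-repKerHsp}. The only minor point to be careful about is that $\phi_\Omega$ is genuinely the reproducing kernel \emph{inside} $L^2_\Omega(\bX)$ (as opposed to some other function reproducing the same subspace), which guarantees the characterization is an equality and not merely an inclusion; this is exactly why the theorem was stated with $\phi_\Omega\in L^2_\Omega(\bX)$ and is addressed by the uniqueness remark following Proposition~\ref{pr-repKerHsp}. The proof is therefore a one-line invocation:
\begin{proof}
  By the previous theorem $L^2_\Omega(\bX)$ is a closed left-invariant reproducing kernel subspace of $L^2(G)$ with reproducing convolution kernel $\phi_\Omega\in L^2_\Omega(\bX)$. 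The claim is now Proposition~\ref{pr-repKerHsp}(d).
\end{proof}
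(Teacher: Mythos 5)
Your proposal is correct and matches the paper exactly: the paper states this corollary as an immediate consequence of Proposition~\ref{pr-repKerHsp} (part (d)) applied via the preceding theorem, which is precisely your one-line argument. The additional direct verification you sketch is a harmless elaboration of the same idea.
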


\section{The Bernstein Inequality and sampling of bandlimited
  functions}\label{S:8}
\noindent
The definition of the topology on $\Lambda$ inspired by \cite{Ruffino}
ensures that the eigenvalues $c_\lambda$ for the Laplacian on
$p_\lambda$ are bounded when $\lambda$ is in a compact set $\Omega$.
This enables us to obtain
\begin{lemma}
  For a compact set $\Omega\in \Lambda$ the functions in $L^2_\Omega$
  are smooth and there is a constant $c(\Omega)$ such that the
  following Bernstein inequality holds
  \begin{equation*}
    \| \Delta^k f \|_{L^2} \leq c(\Omega)^k \| f\|_{L^2}
  \end{equation*}
\end{lemma}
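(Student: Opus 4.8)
The plan is to exploit the vector valued Fourier transform from Section \ref{S:6} together with Ruffino's identification of $\Lambda$ with a subset of $\C^s$, turning the action of $\Delta$ into multiplication by a bounded function on the compact set $\Omega$. First I would recall that, since $\Delta\in\bbD(\bX)$ acts on each spherical representation $(\pi_\lambda,\cH_\lambda)$ by the eigenvalue equation $\Delta\varphi_\lambda=\chi_{\varphi_\lambda}(\Delta)\varphi_\lambda$ from Theorem \ref{th-sfct}(c), and $\Delta$ is one of the $G$-invariant differential operators, the scalar $c_\lambda:=\chi_{\varphi_\lambda}(\Delta)$ is — after writing $\Delta$ as a polynomial in the generators $D_1,\dots,D_s$ — a polynomial function of $\lambda=\Phi(\varphi_\lambda)\in\Lambda_1\subseteq\C^s$. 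In particular $\lambda\mapsto c_\lambda$ is continuous on $\Lambda_1$, hence bounded on the compact set $\Omega$; set $c(\Omega):=\sup_{\lambda\in\Omega}|c_\lambda|$.

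Next I would check that $\Delta$ acts on the vector valued transform as a multiplier. For $f\in L^2_\Omega(\bX)$ with $\supp\widetilde f\subseteq\Omega$, one has $\widetilde{Df}(\lambda)=\pi_\lambda(D)\widetilde f(\lambda)$ for $D\in\bbD(\bX)$ in the appropriate sense (differentiating under the direct integral, using that $\cH_\lambda^K\subseteq\cH_\lambda^\infty$ from the theorem just proved, so that $\widetilde f(\lambda)=\pi_\lambda(f)e_\lambda$ is a smooth vector), and on the one-dimensional space $\cH_\lambda^K=\C e_\lambda$ the operator $\pi_\lambda(\Delta)$ acts by the scalar $c_\lambda$, matching the eigenvalue of $\varphi_\lambda$. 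Thus $\widetilde{\Delta^kf}(\lambda)=c_\lambda^k\,\widetilde f(\lambda)$ for $\mPh$-a.e.\ $\lambda$, with support still contained in $\Omega$; this also shows $\Delta^kf\in L^2_\Omega(\bX)$, so iterating is legitimate and $f$ is smooth (all $\Delta^k f\in L^2$, hence $f$ lies in every Sobolev space, hence is $C^\infty$ by elliptic regularity for the elliptic operator $\Delta$). Applying the Plancherel identity from the theorem at the end of Section \ref{S:6},
\begin{equation*}
  \|\Delta^k f\|_{L^2}^2 = \int_\Omega \|c_\lambda^k\,\widetilde f(\lambda)\|_\lambda^2\,d\mPh(\lambda)
  \leq c(\Omega)^{2k}\int_\Omega \|\widetilde f(\lambda)\|_\lambda^2\,d\mPh(\lambda)
  = c(\Omega)^{2k}\|f\|_{L^2}^2,
\end{equation*}
which is the claimed inequality.

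The main obstacle is the justification that $\widetilde{\Delta^k f}(\lambda)=c_\lambda^k\widetilde f(\lambda)$ — i.e.\ that $\Delta$ intertwines correctly with the direct integral decomposition and acts as the scalar $c_\lambda$ on the relevant vectors. This requires knowing that the vector valued Fourier transform carries $G$-invariant differential operators to their infinitesimal action $\pi_\lambda(D)$ fibrewise, and that on $\cH_\lambda^K$ this scalar coincides with the spherical eigenvalue $\chi_{\varphi_\lambda}(D)$; the latter is immediate from $\varphi_\lambda(x)=(e_\lambda,\pi_\lambda(x)e_\lambda)$ and $\dim\cH_\lambda^K=1$, so $D\varphi_\lambda(e)$ reads off $\pi_\lambda(D)$ on $e_\lambda$. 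One also needs the continuity (indeed polynomial dependence) of $\lambda\mapsto c_\lambda$, which is exactly where the topology on $\Lambda$ coming from $\Phi$ and Ruffino's theorem is used, as flagged in the remark preceding the lemma. Once these two points are in place the estimate above is purely formal.
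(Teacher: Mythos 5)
Your proposal is correct in substance but reaches the lemma by a genuinely different route than the paper. The paper works directly with the inversion formula $f(x)=\int_\Omega(\widetilde f(\lambda),\pi_\lambda(x)e_\lambda)_\lambda\,d\mPh(\lambda)$: it forms difference quotients, uses the mean value theorem and dominated convergence (the domination coming from $\|\pi_\lambda^\infty(X_i)e_\lambda\|_\lambda^2\le|c_\lambda|$ and the boundedness of $c_\lambda$ on the compact set $\Omega$) to differentiate under the integral, obtains smoothness and the pointwise identity $\Delta^kf(x)=\int_\Omega c_\lambda^k(\widetilde f(\lambda),\pi_\lambda(x)e_\lambda)_\lambda\,d\mPh(\lambda)$, and then applies Plancherel — exactly the estimate you write at the end. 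You instead treat $\Delta$ as a Fourier multiplier, $\widetilde{\Delta^kf}(\lambda)=c_\lambda^k\widetilde f(\lambda)$, and deduce smoothness from $\Delta^kf\in L^2$ for all $k$ via elliptic regularity; moreover your observation that $c_\lambda=p(\lambda)$ for a polynomial $p$ (write $\Delta$ in the generators $D_1,\dots,D_s$ and use that $\chi_{\varphi_\lambda}$ is a homomorphism with $\chi_{\varphi_\lambda}(D_j)=\lambda_j$) is a nice explicit substitute for the paper's appeal to the topology of $\Lambda$ from Ruffino's theorem. The one step that needs real care is the one you flag: for $f\in L^2_\Omega$ you do not yet know $\Delta f$ exists classically, so the multiplier identity cannot come from naively differentiating $\widetilde f(\lambda)=\pi_\lambda(f)e_\lambda$; the clean way is to define $\Delta f$ distributionally, prove $\widetilde{\Delta g}(\lambda)=c_\lambda\widetilde g(\lambda)$ first for $g\in C_c^\infty(\bX)$ by integration by parts (here one uses that the lift of $\Delta$ is $\sum_i D(X_i)^2$ over an orthonormal basis of $\fs$, that $\sum_i\pi_\lambda^\infty(X_i)^2$ commutes with $\pi_\lambda(K)$ and hence preserves the line $\C e_\lambda\subset\cH_\lambda^\infty$, and that the eigenvalue equals $\Delta\varphi_\lambda(e)=c_\lambda$, which is real), and then transfer the identity to $f$ by Parseval against such $g$; this gives $\Delta^kf\in L^2_\Omega$ with the stated transform, after which elliptic regularity and your Plancherel computation finish the argument. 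If instead you justify the multiplier formula by "differentiating under the direct integral," you are led back to precisely the paper's dominated-convergence argument. In short, your version buys a slicker derivation of the Bernstein estimate and an explicit constant $c(\Omega)=\sup_{\lambda\in\Omega}|c_\lambda|$, while the paper's buys smoothness directly, without invoking elliptic regularity or distributional interpretations.
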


\begin{proof}
  As we have seen, each $f\in L^2_\Omega$ can be written
  \begin{equation*}
    f(x) = \int_\Omega
    \ip{\widehat{f}(\lambda)}{\pi_\lambda(x)e_\lambda}_\lambda\,d\mPh (\lambda)
  \end{equation*}
  For fixed $\lambda$ the function
  \[t\mapsto (\widetilde{f}(\lambda ),\pi_\lambda (xe^{tX_i})e_\lambda
  )_\lambda\] is differentiable as $e_\lambda\in
  \cH_\lambda^\infty$. Thus, there exists a $t_\lambda$ between zero
  and $t$ such that
  \begin{eqnarray*}
    \frac{(\widetilde{f}(\lambda ),\pi_\lambda (xe^{tX_i})e_\lambda )_\lambda-(\widetilde{f}(\lambda ),\pi_\lambda (x)e_\lambda )_\lambda}{t}
    & = &\left( \widetilde{f}(\lambda ),\frac{\pi_\lambda (xe^{tX_i})e_\lambda -\pi_\lambda (x)e_\lambda}{t}\right)_\lambda\\
    & =&(\widetilde{f}(\lambda ), \pi_\lambda (x e^{t_\lambda X_i})\pi_\lambda (X_i)e_\lambda )_\lambda\, .
  \end{eqnarray*}
  Thus
  \begin{align*}
    \frac{f(xe^{tX_i}) - f(x)}{t} &= \int_\Omega \left(
      \widetilde{f}(\lambda), \frac{\pi_\lambda(xe^{tX_i})e_\lambda-
        \pi_\lambda(x)e_\lambda}{t}\right)_\lambda
    \,d\mPh (\lambda) \\
    &= \int_\Omega \ip{\widetilde {f}(\lambda)}{
      \pi_\lambda(x)\pi_\lambda(e^{t_\lambda
        X_i})\pi_\lambda(X_i)e_\lambda}_\lambda
    \,d\mPh (\lambda) \\
    &\leq \int_\Omega \| \widetilde {f}\|_\lambda \|
    \pi_\lambda(x)\pi_\lambda(e^{t_\lambda
      X_i})\pi_\lambda(X_i)e_\lambda\|_\lambda
    \,d\mPh (\lambda) \\
    &\leq \int_\Omega \| \widetilde {f}\|_\lambda \|
    \pi_\lambda(X_i)e_\lambda\|_\lambda
    \,d\mPh (\lambda) \\
  \end{align*}
  Here we have used that $e_\lambda$ is a smooth vector for
  $\pi_\lambda$ and the unitarity of $\pi_\lambda$. Now
  \begin{equation*}
    \| \pi_\lambda(X_i)e_\lambda\|_\lambda^2
    \leq |\ip{e_\lambda}{\sum_i \pi_\lambda(X_i)\pi_\lambda(X_i)e_\lambda }_\lambda|
    = c_\lambda \| e_\lambda\|_\lambda^2.
  \end{equation*}
  Therefore the Lebesgue dominated convergence theorem ensures that
  \begin{equation*}
    \lim_{t\to 0}\frac{f(xe^{tX_i}) - f(x)}{t}
    = \int_\Omega
    \ip{\widetilde {f}(\lambda)}{\pi_\lambda(x)\pi_\lambda(X_i)e_\lambda}_\lambda
    \,d\mPh (\lambda)
  \end{equation*}
  which shows that $f$ is differentiable.  Repeat the argument to show
  that $f$ is smooth and notice that
  \begin{equation*}
    \Delta^k f(x)
    = \int_\Omega c_\lambda^k
    \ip{\widetilde {f}(\lambda)}{\pi_\lambda(x)e_\lambda}_\lambda\,d\mPh (\lambda)
  \end{equation*}
  It then finally follows that
  \begin{equation*}
    \| \Delta^k f(x) \|_{L^2}^2
    = \int_\Omega |c_\lambda|^{2k} \| \widetilde {f}\|^2_w \,d\mu(\lambda)
    \leq c(\Omega)^{2k} \int_\Omega \| \widetilde {f}\|^2_\lambda \,d\mPh (\lambda)
  \end{equation*}
  We have thus proved the Bernstein inequality.
\end{proof}

\begin{corollary}
  Let $\Omega\subseteq \Lambda$ be a compact set and define the
  neighborhoods $U_\epsilon$ by
  $$
  U_\epsilon = \{ \exp(t_1X_1)\dots\exp(t_nX_n) \mid
  (t_1,\dots,t_n)\in [-\epsilon,\epsilon]^n \}.
  $$
  It is possible to choose $\epsilon$ small enough that for any
  $U_\epsilon$-relatively separated family $x_i$ the functions
  $\ell({x_i})\phi$ form a frame for $L^2_\Omega$.
\end{corollary}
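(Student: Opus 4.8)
The plan is to reduce the statement to the frame lemma of Section~\ref{S:2}: if $\cH$ is a closed, left-invariant reproducing kernel subspace of $L^2(G)$ with convolution kernel $\phi$, and if for some compact neighbourhood $U$ of the identity one has $\norm{\osc_U(f)}_{L^2}\le C_U\norm{f}_{L^2}$ for all $f\in\cH$ with $C_U<1$, then $\{\ell(x_i)\phi\}$ is a frame for $\cH$ for every $U$-relatively separated family $\{x_i\}$. By Section~\ref{S:7} the space $\cH=L^2_\Omega(\bX)$ is exactly such a subspace, with kernel $\phi=\phi_\Omega$: it is closed because $L^2_\Omega(\bX)=\{f\in L^2(G)\mid f*\phi_\Omega=f\}$, and it is left-invariant because $\widetilde{\ell(x)f}(\lambda)=\pi_\lambda(x)\widetilde f(\lambda)$ has the same support as $\widetilde f$. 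So the whole task is to produce, for the given compact $\Omega$, a constant $B_\epsilon$ with $B_\epsilon\to 0$ as $\epsilon\to 0$ such that $\norm{\osc_\epsilon(f)}_{L^2}\le B_\epsilon\norm{f}_{L^2}$ for all $f\in L^2_\Omega(\bX)$, and then to fix $\epsilon$ with $B_\epsilon<1$.

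To obtain such a bound I would concatenate three earlier results. First, the oscillation estimate of Section~\ref{S:3} gives $\norm{\osc_\epsilon(f)}_{L^2}\le C_\epsilon\sum_{1\le|\alpha|\le n}\norm{D^\alpha f}_{L^2}$ with $C_\epsilon\to 0$, provided $D^\alpha f\in L^2(G)$ for $|\alpha|\le n$. Second, the Sobolev--Bessel lemma of Section~\ref{S:3} gives $\sum_{1\le|\alpha|\le n}\norm{D^\alpha f}_{L^2}\le C\norm{(I-\Delta_G)^{n/2}f}_{L^2}$. Third, the Bernstein inequality of Section~\ref{S:8} gives $\norm{\Delta^k f}_{L^2}\le c(\Omega)^k\norm{f}_{L^2}$ for $f\in L^2_\Omega(\bX)$, where $\Delta$ is the Laplacian on $\bX$; in particular it already shows every $f\in L^2_\Omega(\bX)$ is smooth.

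The bridge between the Bessel potential of $\Delta_G$ and the powers of $\Delta$ is the following. Choose the orthonormal basis $X_1,\dots,X_n$ of $\fg$ so that $X_1,\dots,X_m$ span $\fs$ and $X_{m+1},\dots,X_n$ span $\fk$; then any right-$K$-invariant smooth function $f$ satisfies $D(X_i)f=0$ for $i>m$, so $\Delta_G f=\sum_{i=1}^m D(X_i)^2 f=\Delta f$, and since $\Delta f$ is again a function on $\bX$ this propagates: $(I-\Delta_G)^N f=(I-\Delta)^N f$ for every integer $N$. Taking $N=\lceil n/2\rceil$ and using $(1+t)^{n/2}\le(1+t)^N$ on the nonnegative spectrum of $-\Delta_G$, one gets $\norm{(I-\Delta_G)^{n/2}f}_{L^2}\le\norm{(I-\Delta)^N f}_{L^2}\le\sum_{j=0}^N\binom{N}{j}\norm{\Delta^j f}_{L^2}\le(1+c(\Omega))^N\norm{f}_{L^2}$ for $f\in L^2_\Omega(\bX)$; this finiteness is also what legitimizes the first step via the Sobolev--Bessel lemma. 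Multiplying the three estimates yields $\norm{\osc_\epsilon(f)}_{L^2}\le C_\epsilon\,C\,(1+c(\Omega))^{\lceil n/2\rceil}\norm{f}_{L^2}$, so $B_\epsilon:=C_\epsilon\,C\,(1+c(\Omega))^{\lceil n/2\rceil}$ is the required constant.

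Since only $C_\epsilon$ depends on $\epsilon$ and $C_\epsilon\to 0$, we have $B_\epsilon\to 0$; fix $\epsilon$ with $B_\epsilon<1$. Then the oscillation hypothesis of the frame lemma holds for $L^2_\Omega(\bX)$ with $U=U_\epsilon$ and $C_{U_\epsilon}=B_\epsilon<1$, and the lemma gives that $\{\ell(x_i)\phi\}$ is a frame for $L^2_\Omega(\bX)$ for any $U_\epsilon$-relatively separated family $\{x_i\}$, which is the assertion. The only genuinely delicate point I anticipate is the bookkeeping that identifies $\Delta_G f$ with $\Delta f$ on $L^2_\Omega(\bX)$ and absorbs the non-integer Bessel exponent $n/2$ into the next integer power; beyond that the argument is a routine concatenation of the quoted inequalities, the essential feature being that the $\Omega$-dependent factor $(1+c(\Omega))^{\lceil n/2\rceil}$ is a fixed constant that does not spoil $C_\epsilon\to 0$.
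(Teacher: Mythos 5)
Your argument is correct and is exactly the route the paper intends: the corollary is stated without a separate proof precisely because it follows by chaining the oscillation estimate and Sobolev--Bessel lemma of Section~\ref{S:3} with the Bernstein inequality of Section~\ref{S:8} and feeding the resulting bound $\|\osc_\epsilon(f)\|_{L^2}\le C_\epsilon\,\mathrm{const}(\Omega)\|f\|_{L^2}$, $C_\epsilon\to 0$, into the frame lemma of Section~\ref{S:2}. Your added bookkeeping (identifying $\Delta_G$ with $\Delta$ on right-$K$-invariant functions and absorbing the exponent $n/2$ into $\lceil n/2\rceil$) just makes explicit the details the paper leaves implicit.
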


\begin{corollary}
  Let $\Omega\subseteq \Lambda$ be a compact set and define the
  neighborhoods $U_\epsilon$ by
  $$
  U_\epsilon = \{ \exp(t_1X_1)\dots\exp(t_nX_n) \mid
  (t_1,\dots,t_n)\in [-\epsilon,\epsilon]^n \}.
  $$
  It is possible to choose $\epsilon$ small enough that for any
  $U_\epsilon$-relatively separated family $x_i$ and any partition of
  unity $0\leq \psi_i \leq 1_{x_iU_\epsilon}$ the operator
  \begin{equation*}
    T_1 f = \sum_i f(x_i)\psi_i*\phi
  \end{equation*}
  is invertible on $L^2_\Omega$.  If the functions $\ell({x_i})\phi$
  also form a frame for $L^2_\Omega$, the functions
  $T_1^{-1}(\psi_i*\phi)$ provide a dual frame.
\end{corollary}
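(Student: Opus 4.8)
The plan is to reduce the statement to the discussion of the sampling operator $T$ in Section \ref{S:2}; the only genuinely new ingredient is the Bernstein inequality of Section \ref{S:8}, which will allow me to verify the oscillation bound $C_{U_\epsilon}<1$ for small $\epsilon$. First I would check that a function $f\in L^2_\Omega(\bX)$, regarded as a right $K$-invariant element of $L^2(G)$, lies in the $L^2$-Sobolev space of order $n=\dim G$ with norm controlled by $\|f\|_{L^2}$. By the Bernstein lemma $f$ is smooth, and choosing the basis $X_1,\dots,X_n$ of $\fg$ so that $X_j\in\fk$ for $j>\dim\fs$, the $\fk$-derivatives annihilate $f$, whence $\Delta_Gf=\Delta f$. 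From the spectral formula $\Delta^kf(x)=\int_\Omega c_\lambda^k\,\ip{\widetilde f(\lambda)}{\pi_\lambda(x)e_\lambda}_\lambda\,d\mPh(\lambda)$, the bound $|c_\lambda|\le c(\Omega)$ on $\Omega$ and $c_\lambda\le 0$, reading the fractional power spectrally on $L^2_\Omega$, I get $\|(I-\Delta_G)^{n/2}f\|_{L^2}\le(1+c(\Omega))^{n/2}\|f\|_{L^2}$. Feeding this into the last lemma of Section \ref{S:3} and then into the oscillation theorem there yields
\[
\|\osc_\epsilon(f)\|_{L^2}\le C_\epsilon\!\!\sum_{1\le|\alpha|\le n}\!\!\|D^\alpha f\|_{L^2}\le \widetilde C_\epsilon\,\|f\|_{L^2},\qquad \widetilde C_\epsilon\to 0\ \text{ as }\ \epsilon\to 0,
\]
uniformly in $f\in L^2_\Omega(\bX)$; I then fix $\epsilon$ so small that $\widetilde C_\epsilon<1$.

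Next I would carry out the argument following the definition of $T$ in Section \ref{S:2}, now with $\phi=\phi_\Omega$ and $T=T_1$. For $U_\epsilon$-relatively separated $\{x_i\}$ and $0\le\psi_i\le\I_{x_iU_\epsilon}$ with $\sum_i\psi_i=1$, using $f=f*\phi_\Omega$ one has $f-T_1f=\bigl(\sum_i(f-f(x_i))\psi_i\bigr)*\phi_\Omega$, and $|f(x)-f(x_i)|\le\osc_\epsilon(f)(x)$ whenever $\psi_i(x)\ne0$; since convolution by $\phi_\Omega$ is the orthogonal projection onto $L^2_\Omega(\bX)$, hence a contraction, this gives $\|f-T_1f\|_{L^2}\le\|\osc_\epsilon(f)\|_{L^2}\le\widetilde C_\epsilon\|f\|_{L^2}$. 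Thus $\|I-T_1\|<1$ on $L^2_\Omega$ and $T_1$ is invertible there, with $T_1^{-1}=\sum_{k\ge 0}(I-T_1)^k$ convergent in operator norm.

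For the dual-frame claim, put $\phi_i=\ell(x_i)\phi_\Omega$, so that $f(x_i)=\ip{f}{\phi_i}$ by Proposition \ref{pr-repKerHsp} and $T_1f=\sum_i\ip{f}{\phi_i}\,\psi_i*\phi_\Omega$. Since $\sum_i|\ip{f}{\phi_i}|^2<\infty$ (the $\{\phi_i\}$ being a frame) the series $\sum_i\ip{f}{\phi_i}\psi_i$ converges in $L^2$, hence so does its $\phi_\Omega$-convolution, and applying the bounded operator $T_1^{-1}$ term by term gives the reconstruction identity $f=\sum_i\ip{f}{\phi_i}\,T_1^{-1}(\psi_i*\phi_\Omega)$ for all $f\in L^2_\Omega$. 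Writing $g_i=T_1^{-1}(\psi_i*\phi_\Omega)$: for $g\in L^2_\Omega$ the orthogonal projection gives $\ip{g}{\psi_i*\phi_\Omega}=\ip{g}{\psi_i}$ and bounded overlap gives $\sum_i|\ip{g}{\psi_i}|^2\le N|U_\epsilon|\,\|g\|^2$, so $\sum_i|\ip{f}{g_i}|^2=\sum_i|\ip{(T_1^{-1})^*f}{\psi_i}|^2\le N|U_\epsilon|\,\|T_1^{-1}\|^2\|f\|^2$ (upper bound), while the reconstruction identity and Cauchy--Schwarz give $\sum_i|\ip{f}{g_i}|^2\ge\|f\|^2/B_\phi$ (lower bound). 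Hence $\{g_i\}$ is a frame obeying the duality relation with $\{\phi_i\}$, i.e. a dual frame.

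The hard part will be the first paragraph: one must be careful transferring between $\bX$ and $G$ (the oscillation machinery of Sections \ref{S:2}--\ref{S:3} is set up on $G$ with $\Delta_G$, while the Bernstein inequality concerns $\Delta$ on $\bX$) and about interpreting $(I-\Delta_G)^{n/2}$ for odd $n$; it is precisely the compactness of $\Omega$ — through the boundedness of $c_\lambda$ on $\Omega$ that is built into the topology of $\Lambda$ coming from \cite{Ruffino} — that makes this uniform control possible.
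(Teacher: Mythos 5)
Your proposal is correct and follows exactly the route the paper intends (the corollary is left unproved there): the Bernstein inequality of Section \ref{S:8} combined with the Bessel-potential lemma and the oscillation theorem of Section \ref{S:3} gives $\|\osc_\epsilon(f)\|_{L^2}\le \widetilde C_\epsilon\|f\|_{L^2}$ on $L^2_\Omega$ with $\widetilde C_\epsilon\to 0$, and then the operator estimate $\|f-T_1f\|_{L^2}\le\|\osc_\epsilon(f)\|_{L^2}$ from Section \ref{S:2} yields invertibility by the Neumann series. Your added details — that $\Delta_G f=\Delta f$ for right $K$-invariant $f$, the spectral reading of $(I-\Delta_G)^{n/2}$ on $L^2_\Omega$ using the boundedness of $c_\lambda$ on the compact set $\Omega$, and the explicit frame bounds for $T_1^{-1}(\psi_i*\phi)$ — are precisely the points the paper leaves implicit, and they are handled correctly.
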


\section{Examples of Commutative Spaces}\label{se-Ex}
\noindent
In this section we give some examples of the theory developed in the previous section. We do not discuss the Riemannian symmetric spaces of the compact type as those can be found in \cite{Pesenson2008}.

\subsection{The Space $\R^d$} The simplest example of a Gelfand pairs
is $(\R^d, \{0\})$. The algebra of invariant differential operators is
$\bbD (\R^d)=\C[\partial_1,\ldots ,\partial_d]$ the polynomials in the
partial derivatives $\partial_j=\partial/\partial x_j$. The positive
definite spherical functions are the exponentials $\varphi_{\lambda} (x)
= e^{i\lambda \cdot x}$, $\lambda\in\R^d$. Using $\partial_1,\ldots
,\partial_d$ as generators for $\bbD(\R^d)$ Theorem \ref{Ruffino}
identifies $\Lambda$ with $i\R^d$ via the map $\varphi_{\lambda} \mapsto
i(\lambda_1,\ldots ,\lambda_d)$. Note the slight different from our previous notation for
$\varphi_\lambda$.

We can also consider $\R^d$ as the commutative space corresponding to
the connected Euclidean motion group $G=\SO (d)\ltimes \R^d$ with
$K=\SO (d)$. The $K$-invariant functions are now the radial
functions $f(x)=F_f(\|x\|)$, where $F_f$ is a function of one
variable. We have $\bbD (\R^d)=\C[-\Delta]$ and Theorem \ref{Ruffino}
now identifies the spectrum $\Lambda$ with $\R^+$. For $\lambda\in \R$
we denote by $\varphi_\lambda$ the spherical function with $-\Delta
\varphi_\lambda =\lambda^2\varphi_\lambda$.

Denote by $J_\nu$ the Bessel-function
\[J_\nu (r)=\frac{(r/2)^{\nu}}{\Gamma (1/2)\Gamma (\nu
  +1/2)}\int_{-1}^1\cos (tr) (1-t^2)^{\nu -1/2}\, dt\] see
\cite{Lebedev1972}, p. 144.
\begin{lemma} $\displaystyle \varphi_\lambda (x)=
  \frac{2^{\frac{d-2}{2}}\Gamma\left(\frac{d}{2}\right)}{(\lambda
    \|x\|)^{\frac{d-2}{2}}}\, J_{(d-2)/2} (\lambda \|x\| )=
  \frac{\Gamma \left(\frac{d}{2}\right)}{\sqrt{\pi}\Gamma
    \left(\frac{d-1}{2}\right)} \int_{-1}^1\cos ( \lambda \|x\| t )
  (1-t^2)^{\frac{d-3}{2} }\, dt $.
\end{lemma}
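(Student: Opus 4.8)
The plan is to identify the spherical function $\varphi_\lambda$ for the Gelfand pair $(\SO(d)\ltimes\R^d,\SO(d))$ by exploiting characterization (b) of Theorem~\ref{th-sfct}: $\varphi_\lambda$ is the unique continuous radial function on $\R^d$ with $\varphi_\lambda(0)=1$ satisfying the functional equation $\int_K \varphi_\lambda(x+k\cdot y)\,d\mu_K(k)=\varphi_\lambda(x)\varphi_\lambda(y)$, or equivalently via characterization (c): the unique radial eigenfunction of $-\Delta$ with eigenvalue $\lambda^2$ normalized so that $\varphi_\lambda(0)=1$. Both routes reduce to a one-variable problem. I would first write $\varphi_\lambda(x)=F(\lambda\|x\|)$ and note that $-\Delta\varphi_\lambda=\lambda^2\varphi_\lambda$ forces $F$ to solve the radial Bessel-type ODE $F''(r)+\frac{d-1}{r}F'(r)+F(r)=0$ with $F(0)=1$ and $F$ smooth at $r=0$; the unique such solution is $F(r)=c_{d}\,r^{-(d-2)/2}J_{(d-2)/2}(r)$ with $c_d=2^{(d-2)/2}\Gamma(d/2)$ chosen so $F(0)=1$ (using $J_\nu(r)\sim (r/2)^\nu/\Gamma(\nu+1)$ as $r\to0$). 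This gives the first expression.

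Alternatively — and this is the cleaner way to get the integral formula directly — I would compute $\varphi_\lambda$ as the $K$-average of an exponential: since the $\R^d$-spherical functions for the pair $(\R^d,\{0\})$ are $e^{i\xi\cdot x}$, averaging over $K=\SO(d)$ gives the spherical function for the larger pair,
\[
\varphi_\lambda(x)=\int_{\SO(d)} e^{i\lambda\, e_1\cdot (k\cdot x)}\,d\mu_K(k)=\int_{S^{d-1}} e^{i\lambda\|x\|\,\omega_1}\,d\sigma(\omega),
\]
where $d\sigma$ is the normalized surface measure on the sphere and $\omega_1$ is the first coordinate. One checks this satisfies the functional equation in Theorem~\ref{th-sfct}(b) and is normalized correctly, so by uniqueness it is $\varphi_\lambda$. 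Then I would evaluate the sphere integral by slicing perpendicular to $e_1$: parametrizing $\omega_1=t\in[-1,1]$, the measure of the slice at height $t$ is proportional to $(1-t^2)^{(d-3)/2}\,dt$, and after normalizing the total mass to $1$ (the constant being $\Gamma(d/2)/(\sqrt\pi\,\Gamma((d-1)/2))$, the reciprocal of the Beta integral $\int_{-1}^1(1-t^2)^{(d-3)/2}\,dt$) one obtains
\[
\varphi_\lambda(x)=\frac{\Gamma(d/2)}{\sqrt\pi\,\Gamma((d-1)/2)}\int_{-1}^1 e^{i\lambda\|x\|t}(1-t^2)^{(d-3)/2}\,dt,
\]
and the imaginary part integrates to zero by oddness, leaving the stated cosine integral.

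Finally, I would reconcile the two formulas by invoking the integral representation of the Bessel function quoted in the paper just above the statement: with $\nu=(d-2)/2$ and $r=\lambda\|x\|$, the displayed formula for $J_\nu$ gives exactly $J_{(d-2)/2}(r)=\dfrac{(r/2)^{(d-2)/2}}{\Gamma(1/2)\Gamma((d-1)/2)}\int_{-1}^1\cos(rt)(1-t^2)^{(d-3)/2}\,dt$, so multiplying by $2^{(d-2)/2}\Gamma(d/2)/r^{(d-2)/2}$ and using $\Gamma(1/2)=\sqrt\pi$ turns the Bessel expression into the cosine integral, proving the two sides equal. The main obstacle — really the only non-routine point — is pinning down the normalizing constants: one must be careful that $\mu_K(K)=1$ (the paper's convention) feeds correctly into the slicing computation, and that the small-$r$ asymptotics of $J_\nu$ are used with the right Gamma-function normalization so that $\varphi_\lambda(0)=1$; once the Beta-integral value $\int_{-1}^1(1-t^2)^{(d-3)/2}\,dt=\sqrt\pi\,\Gamma((d-1)/2)/\Gamma(d/2)$ is in hand, everything else is bookkeeping.
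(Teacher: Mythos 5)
Your proposal is correct, but your main derivation runs along a different line than the paper's. The paper's proof is a pure verification: it names the cosine-integral expression $\psi_\lambda$, observes that it is analytic (the integrand depends on $\lambda\|x\|$ only through $\cos$, which is even, so $\psi_\lambda$ is an analytic radial function of $x$), that it is an eigenfunction of $-\Delta$ with eigenvalue $\lambda^2$ with $\psi_\lambda(0)=1$, and then invokes Theorem \ref{th-sfct} (characterization (c), together with the fact that $\bbD(\R^d)=\C[-\Delta]$ so the eigenvalue pins down the spherical function) to conclude $\varphi_\lambda=\psi_\lambda$; the equality with the Bessel form is just the quoted integral representation of $J_\nu$. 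Your first route is essentially this argument made constructive (you solve the radial ODE rather than merely check the candidate), while your second route --- averaging the plane waves $e^{i\lambda\,e_1\cdot(k\cdot x)}$ over $K$, checking the functional equation of Theorem \ref{th-sfct}(b), and slicing the sphere to produce the $(1-t^2)^{(d-3)/2}$ density --- is a genuinely different derivation. What it buys is an explanation of where the formula comes from, and it proves as a byproduct exactly the identity $\varphi_\lambda(x)=\int_{S^{d-1}}e^{-i\lambda(\omega,x)}\,d\sigma(\omega)$ that the paper only states in the Remark following the Lemma; what the paper's verification buys is brevity, since no slicing or normalization bookkeeping is needed beyond the quoted Bessel formula. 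Your constants check out (the Beta integral, the small-$r$ asymptotics of $J_\nu$, and the translation between the two displayed expressions via $\Gamma(1/2)=\sqrt{\pi}$), and the uniqueness you invoke is legitimate: the second solution of the radial Bessel equation is singular at the origin, and in the group-theoretic formulation the eigenvalue of $-\Delta$ determines the bounded spherical function because $\Delta$ generates $\bbD(\R^d)$.
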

\begin{proof} Denote for the moment the right hand side by
  $\psi_\lambda$. Then $\psi_\lambda $ is analytic as $\cos$ is
  even. It is also an radial eigenfunction of $-\Delta$ with
  eigenvalue $\lambda^2$ and $\psi_\lambda (0)=1$. Now Theorem
  \ref{th-sfct} implies that $\varphi_\lambda=\psi_\lambda$.
\end{proof}

\begin{remark} We note that we can write
  \[\varphi_\lambda (x)=\int_{S^{d-1}}e^{-i\lambda (\omega ,x)}\,
  d\sigma (\omega)\] where $d\sigma$ is the normalized rotational
  invariant measure on the sphere.
\end{remark}

It is easy to describe the representation $(\pi_\lambda ,\cH_\lambda)$
associated to $\varphi_\lambda$.  For $\lambda\in\R^*$ set
$\cH_\lambda =L^2(S^{d-1},d\sigma )=L^2(S^{d-1})$ and define
\[ \pi_\lambda ((k,x)) u (\omega ) := e^{-i\lambda (\omega ,x)}u
(k^{-1}(\omega ))\, .\] We take the constant function $\omega \mapsto
1$ as normalized $K$-invariant vector $e_\lambda$. Then
\[(e_\lambda ,\pi_\lambda ((k,x))e_\lambda)= \int_{S^{d-1}}
\overline{e^{-i\lambda (\omega ,x)}}\, d\sigma (\omega
)=\varphi_\lambda (x)\, .\] We refer to \cite{os2} for more
information.

\subsection{The Sphere $S^d$} Let $S^d=\{x\in\R^{d+1}\mid \|x\|=1\}$
be the unit sphere in $\R^{d+1}$. We refer to Chapter 9 of
\cite{Faraut2008} and Chapter III in \cite{Takeuchi1994} for more
detailed discussion on harmonic analysis and representation theory
related to the sphere. In particular, most of the proofs can be found
there. Recall from Example \ref{e:sphere} that $S^d=G/K$ where $G=\SO (d+1)$ and
$K=\SO (d)$ and that $S^d$ is a commutative space.

For $d=1$ we have $S^1=\bbT=\{z\in \C\mid |z|=1\}$ is an abelian group
and the spherical functions are just the usual characters $z\mapsto
z^n$ (or, if we view $\bbT=\R/2\pi \Z$, the functions $\theta \mapsto
e^{in \theta}$). We therefore assume that $d\ge 2$, but we would also
like to point out another special case.  $\SO(3)\simeq \SU (2)$ and
$\SO_o(4)=\SU (2)\times \SU (2)$. The group $K=\SU (2)$ is embedded as
the diagonal in $\SU (2)\times \SU (2)$. Then
\[S^3=\SO(4)/\SO(3)\simeq \SU (2)\simeq \{z\in\bbH\mid |z|=1\}\]
and the $K$-invariant functions on $S^3$ corresponds to the central
functions on $\SU (2)$, i.e. $f(kuk^{-1})=f(u)$. Hence $\Lambda$, the
set of spherical representations, is just $\widehat{\SU (2)}$, the set
of equivalence classes of irreducible representations of $\SU (2)$ and
the spherical functions are
\[\varphi_\pi =\frac{1}{d(\pi )}\Tr \pi =\frac{1}{d (\pi)} \chi_\pi\]
where $d(\pi )$ denotes the dimension of $V_\pi$. We will come back to
this example later.

Denote by $\fg=\so (d+1)=\{X\in M_{d+1}(\R )\mid X^T=-X\}$ the Lie
algebra of $G$. We can take $\langle X, Y\rangle = -\Tr (XY)$ as a $K$-invariant inner product
on $\fg$. Then
\[\fk=\left\{\left.\begin{pmatrix} 0 & 0\\ 0 & Y\end{pmatrix}\,
  \right|\, Y\in \so (d)\right\}\simeq \so (d)\]
and $\fs=\fk^{\perp}$ is given by
\[\fs=\left\{\left. X(v)=\begin{pmatrix} 0 &-v^T\\ v &
      0\end{pmatrix}\, \right|\, v\in\R^d\right\}\simeq \R^d\, .\]

A simple matrix multiplication shows that $k X(v)k^{-1}=X(k(v))$ where
we have identified $k\in \SO(d)$ with its image in $K$. It follows
that the only invariant polynomials on $\fs$ are those of the form
$p(X(v))=q(\|v\|^2)$ where $q$ is a polynomial of one variable. It
follows that $\bbD (S^d)=\C[\Delta ]$ where $\Delta$ now denotes the
Laplace operator on $S^{d-1}$. Thus $\bbD (S^d)$ is abelian and hence
$S^d=\SO(d+1)/\SO(d)$ is a commutative space.

Recall that a polynomial $p(x)$ on $\R^{d+1}$ is homogeneous of degree
$n$ if $p(\lambda x)=\lambda^n p(x)$ for all $\lambda\in \R$. $p$ is
harmonic if $\Delta_{\R^{d+1}}p=0$. Denote by $\cH_n $ the space of
harmonic polynomials that are homogeneous of degree $n$ and set
\begin{equation}\label{defYn}
  \cY_n := \cH_n|_{S^d}=\{p|_{S^d}\mid p\in \cH_n\}\, .
\end{equation}

As the action of $G$ on $\R^{d+1}$ commutes with $\Delta_{\R^{d+1}}$
it follows that each of the spaces $\cY_n$ are $G$-invariant. Denote
the corresponding representation by $\pi_n$. Thus $\pi_n
(a)p(x)=p(a^{-1}x)$ for $p\in \cY_n$.
\begin{theorem} The following holds:
  \begin{enumerate}
  \item $(\pi_n,\cY_n)$ is an irreducible spherical representation of
    $\SO_o(d+1)$.
  \item If $(\pi,V)$ is an irreducible spherical representation of $G$
    then there exists an $n$ such that $(\pi,V)\simeq (\pi_n,\cY_n)$.
  \item $\dim \cY_n = (2n+d-1)\frac{(d+n-2)!}{(d-1)!n!}=: d(n)$.
  \item $-\Delta |_{\cY_n} = n(d+n-1)$.
  \item $L^2(S^d)\simeq_G \bigoplus_{n=0}^\infty \cY_n$. In
    particular, every $f\in L^2(S^d)$ can be approximated by harmonic
    polynomials.
  \end{enumerate}
\end{theorem}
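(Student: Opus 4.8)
The plan is to treat the five parts in the order (d), (c), (a), and then (e) and (b) together; most of the individual steps are classical and can also be found in Chapter~9 of \cite{Faraut2008} and Chapter~III of \cite{Takeuchi1994}. Part (d) is a computation in polar coordinates. Writing $x=r\omega$ with $r>0$, $\omega\in S^d$, the Euclidean Laplacian takes the form $\Delta_{\R^{d+1}}=\partial_r^2+\frac{d}{r}\partial_r+r^{-2}\Delta$, with $\Delta$ the Laplace--Beltrami operator of $S^d$. Applying this to $p\in\cH_n$, written $p(r\omega)=r^nY(\omega)$ with $Y=p|_{S^d}\in\cY_n$, and using $\Delta_{\R^{d+1}}p=0$, one gets $\bigl(n(n-1)+dn\bigr)r^{n-2}Y+r^{n-2}\Delta Y=0$, i.e.\ $\Delta Y=-n(n+d-1)Y$, which is (d). In particular the $\cY_n$ are eigenspaces of $\Delta$ for pairwise distinct eigenvalues, hence mutually orthogonal in $L^2(S^d)$; this observation will be reused for (e).

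For (c) and (a) the central tool is the Fischer decomposition $\cP_n(\R^{d+1})=\cH_n\oplus\|x\|^2\,\cP_{n-2}(\R^{d+1})$ of the homogeneous polynomials of degree $n$: with respect to the Fischer inner product $\langle x^\alpha,x^\beta\rangle=\alpha!\,\delta_{\alpha\beta}$ the adjoint of $\Delta_{\R^{d+1}}\colon\cP_n\to\cP_{n-2}$ is multiplication by $\|x\|^2$, which is injective, so $\Delta_{\R^{d+1}}$ is surjective and $\cH_n=\ker\Delta_{\R^{d+1}}$ is the orthogonal complement of $\|x\|^2\,\cP_{n-2}$ in $\cP_n$. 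Counting dimensions, $\dim\cY_n=\dim\cH_n=\binom{n+d}{d}-\binom{n+d-2}{d}$, which simplifies to $(2n+d-1)\frac{(d+n-2)!}{(d-1)!\,n!}$; this is (c). For (a), note that $K=\SO(d)$ fixes $x_1$ and acts on $x_2,\dots,x_{d+1}$, so by classical invariant theory (valid for $\SO(d)$ with $d\ge2$) the $K$-invariant homogeneous polynomials of degree $n$ are the span of the $x_1^a\|x\|^{2b}$ with $a+2b=n$, of dimension $\lfloor n/2\rfloor+1$. Taking $K$-invariants in the ($K$-equivariant) Fischer decomposition gives $\cP_n^K=\cH_n^K\oplus\|x\|^2\,\cP_{n-2}^K$, whence $\dim\cH_n^K=(\lfloor n/2\rfloor+1)-(\lfloor(n-2)/2\rfloor+1)=1$; since restriction to $S^d$ is injective on $\cH_n$ (a nonzero homogeneous harmonic polynomial cannot vanish on the sphere), $\dim\cY_n^K=1$, so $(\pi_n,\cY_n)$ is spherical. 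For irreducibility, let $W\subseteq\cY_n$ be a nonzero $G$-invariant subspace; a nonzero element of $\cY_n$ cannot vanish identically on $S^d$ and $G$ acts transitively on $S^d$, so after replacing a suitable $w\in W$ by some $\pi_n(g)w\in W$ we may assume $w(e_1)\ne0$, and then the $K$-average $p_Kw=\int_K\pi_n(k)w\,dk\in W$ satisfies $(p_Kw)(e_1)=w(e_1)\ne0$ because $K$ fixes $e_1$; thus $W^K\ne0$. If $\cY_n$ decomposed as a direct sum of two nonzero invariant subspaces, both would have nonzero $K$-fixed part, contradicting $\dim\cY_n^K=1$; hence $(\pi_n,\cY_n)$ is irreducible, completing (a).

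For (e) and (b) I would use Stone--Weierstrass. The subalgebra of $C(S^d)$ generated by the coordinate functions $x_1,\dots,x_{d+1}$ is closed under complex conjugation, contains the constants and separates points, hence is dense in $C(S^d)$ and so in $L^2(S^d)$; it equals $\sum_n\cP_n|_{S^d}$, and since $\|x\|^2\equiv1$ on $S^d$ iteration of the Fischer decomposition gives $\cP_n|_{S^d}=\bigoplus_{0\le2j\le n}\cY_{n-2j}$, so $\sum_n\cY_n$ is dense in $L^2(S^d)$. Together with the orthogonality of the $\cY_n$ from the first paragraph this yields $L^2(S^d)\simeq_G\bigoplus_{n\ge0}\cY_n$, which is (e). Part (b) then follows: since $G$ is compact every irreducible spherical representation $(\pi,V)$ is finite dimensional, and for a unit vector $e_\pi\in V^K$ the matrix coefficients $u\mapsto(u,\pi(\cdot)e_\pi)$ define a nonzero $G$-equivariant map into $L^2(G)^{\rho(K)}\simeq L^2(\bX)=L^2(S^d)=\bigoplus_n\cY_n$; projecting onto the summands and using Schur's lemma, $V\simeq\cY_n$ for some $n$. (Alternatively one may invoke the vector-valued Plancherel theorem of Section~\ref{S:6}, which reduces (b) to identifying the representations occurring in $L^2(\bX)$, i.e.\ exactly to (e).)

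The one genuinely nonroutine ingredient, and the step I expect to be the main obstacle, is the Fischer decomposition $\cP_n=\cH_n\oplus\|x\|^2\,\cP_{n-2}$ --- equivalently, surjectivity of $\Delta_{\R^{d+1}}\colon\cP_n\to\cP_{n-2}$ --- since it drives both the dimension count in (c) and the Stone--Weierstrass reduction in (e) and (b). Everything else is a routine coordinate computation, classical invariant theory, or formal representation theory.
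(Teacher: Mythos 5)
Your proposal is correct, and it follows essentially the route the paper itself points to: the paper offers no proof of this theorem, deferring to Chapter~9 of \cite{Faraut2008} and Chapter~III of \cite{Takeuchi1994}, and your argument (polar-coordinate computation for the eigenvalue, Fischer decomposition $\cP_n=\cH_n\oplus\|x\|^2\cP_{n-2}$ for the dimension count and for $\dim\cY_n^K=1$, Stone--Weierstrass plus orthogonality of distinct $\Delta$-eigenspaces for the $L^2$-decomposition, and Schur's lemma for exhaustion) is exactly the standard treatment found there. The only steps left implicit are routine: complete reducibility of the finite-dimensional unitary representation $\cY_n$ of the compact group $\SO(d+1)$ in your irreducibility argument, and symmetry of $\Delta$ on $S^d$ for the orthogonality of the $\cY_n$.
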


The last part of the above theorem implies that $\Lambda
=\N=\{0,1,\ldots \}$. We use this natural parametrization of $\Lambda$ rather than the one given
in Section \ref{S:6}.

For $\Omega\in \N$
the Paley-Wiener space for $\Omega$ is
\[L^2_\Omega (S^d)=\{p|_{S^d}\mid p \text{ is a harmonic polynomial of
  degree }\le \Omega\}\, .\] It is noted that $\dim L^2_\Omega
(S^d)<\infty$ which is also the case in the more general case of
compact Gelfand pairs.

The group $\SO(d)$ acts transitively on spheres in $\R^d$. Hence
every $v\in S^d$ is $K$-conjugate to a vector of the form $(\cos
(\theta ), \sin (\theta), 0,\ldots ,0)^T$ and a function
$f$ is $K$-invariant
if and only if there exists a function $F_f$ of one variable such that
\[f(v)=F_f(\cos (\theta ))=F_f ((v,e_1))=F_f(v_1)\, .\] In particular,
this holds for the spherical function $\varphi_n (x)$ corresponding to
the representation $\pi_n$ as well as the reproducing kernel $\phi$ of
the space $L^2_\Omega (S^d)$. In fact, for $d\ge 2$ the spherical
functions are determined by
the Jacobi polynomials, or normalized
Gegenbauer polynomials in the following manner:
$F_{\phi_n}(t)=\Phi_n(t)$ or $\varphi_n(x)=\Phi_n
((x,e_1))=\Phi_n(\cos (\theta ))$,
where
\begin{eqnarray*}
  \Phi_n(\cos (\theta))&=&{}_2F_1(n+d-1,-n,\frac{d}{2};\sin^2(\theta
  /2)) \\
&=&{}_2F_1\left(n+d-1,-n,\frac{d}{2};\frac{1-\cos (\theta )}{2}\right)\\
  &=& \frac{n! (d-2)!}{(n+d-2)!} C^{(d-1)/2}_n(\cos (\theta )).
\end{eqnarray*}
As the polynomials $\varphi_n(t)$ are real valued we can write the
spherical Fourier transform as
\[\widehat{f}(n)=\int_{S^d} f(x)\varphi_n(x )\, d\sigma (x)=
\frac{\Gamma\left(\frac{d+1}{2}\right)}{\sqrt{\pi}\Gamma\left(\frac{d}{2}\right)}\,
\int_{-1}^1 F_f(t)\Phi_n(t)(1-t^2)^{\frac{d}{2}-1}\, dt\] with
inversion formula
\[f(x)=\sum_{n=0}^\infty d(n)\widehat{f}(n)\varphi_n(x)=
\sum_{n=0}^\infty d(n)\widehat{f}(n)\Phi_n((x,e_1))\, .\] In
particular the sinc-type function is given by
\begin{equation}\label{phiSphere}
  \phi_\Omega (x)=F_{\phi_\Omega} ((x,e_1))=\sum_{n=0}^\Omega d(n)\Phi_n((x,e_1))\, .
\end{equation}
Note also that we can write the convolution kernel $\phi_\Omega
(a^{-1}b)$, $a,b\in \SO(d+1)$ as $F_{\phi_\Omega} ((x,y))$ if $x=be_1$ and $y=a e_1$.

For $d=1$ the sphere is the torus $\bT=\{z\in \C\mid |z|=1\}$ and
$\varphi_n (z)=z^n$. Hence
\[\phi_{\Omega}(e^{it})=\sum_{n=-\Omega}^\Omega e^{nit}=\frac{\sin
  ((\Omega +1/2)t)}{\sin (t/2)}\] is the Dirichlet kernel $D_\Omega$.
In
the higher dimensional cases the kernel $\phi_\Omega$ behaves very
similar to
the Dirichlet kernel. Here are some of its properties:
\begin{lemma}\label{phi-prop}
  Let the notation be as above. Then the following holds:
  \begin{enumerate}
  \item $\phi_\Omega (e_1)=\sum_{n=0}^\Omega d(n)=\dim L^2_\Omega
    (S^d) \nearrow \infty$ as $\Omega \to \infty$.
  \item $\int_{S^d}\phi_\Omega (x)\, d\sigma (x)=1$.
  \item $\|\phi_\Omega \|^2_2=\sum_{n=0}^\Omega d(n)\to \infty \text{
      as } \Omega\to \infty$.
  \item If $f\in L^2(S^d)$, then $\displaystyle{f*\phi_\Omega
      =\int_{S^{d-1}}f(x)F_{\phi_\Omega} ((\cdot ,x))\,d\sigma (x)
      \lar_{\Omega\to \infty} f}$ in $L^2(S^d)$.
  \end{enumerate}
\end{lemma}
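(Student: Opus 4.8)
The plan is to dispatch the four assertions separately, each reducing to the expansion $\phi_\Omega=\sum_{n=0}^{\Omega}d(n)\varphi_n$ of \eqref{phiSphere}, the normalization $\varphi_n(e_1)=\Phi_n(1)=1$, the identity $\widehat{\phi_\Omega}(n)=\I_{\{0,\dots,\Omega\}}(n)$ — which is immediate from the construction of $\phi_\Omega$ in Section \ref{S:7}, since there $\widetilde{\phi_\Omega}(n)=\I_{\{0,\dots,\Omega\}}(n)\,e_n$ and $\widehat{\phi_\Omega}(n)=(\widetilde{\phi_\Omega}(n),e_n)$ with $\|e_n\|=1$ — and the fact, read off from the inversion formula $f=\sum_n d(n)\widehat f(n)\varphi_n$, that the Plancherel measure on $\Lambda=\N$ assigns mass $d(n)=\dim\cY_n$ to the point $n$.

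For (a) I would evaluate \eqref{phiSphere} at $x=e_1$, so that $\Phi_n((e_1,e_1))=\Phi_n(1)=1$ yields $\phi_\Omega(e_1)=\sum_{n=0}^{\Omega}d(n)$; since the Paley-Wiener space is $L^2_\Omega(S^d)=\bigoplus_{n=0}^{\Omega}\cY_n$ with $\dim\cY_n=d(n)$, this sum is $\dim L^2_\Omega(S^d)$, and $d(n)\ge 1$ makes it strictly increasing and unbounded in $\Omega$. For (b), since $\varphi_0\equiv 1$ and $\sigma(S^d)=1$, one has $\int_{S^d}\phi_\Omega\,d\sigma=\widehat{\phi_\Omega}(0)$, which is $1$ because $0\in\{0,\dots,\Omega\}$. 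For (c) I would apply the Plancherel identity of Theorem \ref{th-PlancherelSpherical1}(a) to the $K$-invariant function $\phi_\Omega\in L^1(\bX)^K\cap L^2(\bX)$, obtaining $\|\phi_\Omega\|_2^2=\sum_n d(n)\,|\widehat{\phi_\Omega}(n)|^2=\sum_{n=0}^{\Omega}d(n)$, which tends to $\infty$ by (a); this is also the reproducing-kernel identity $\|\phi_\Omega\|_2^2=\phi_\Omega(e_1)$ combined with (a).

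For (d), by the reproducing-kernel theorem of Section \ref{S:7} the map $f\mapsto f*\phi_\Omega$ is the orthogonal projection $P_\Omega$ of $L^2(S^d)$ onto $L^2_\Omega(S^d)=\bigoplus_{n=0}^{\Omega}\cY_n$, and the displayed integral formula is merely the rewriting of this convolution via the $K$-invariance of $\phi_\Omega$ (the kernel $\phi_\Omega(x^{-1}y)$ equals $F_{\phi_\Omega}((x,y))$, as noted after \eqref{phiSphere}). Since $L^2(S^d)=\bigoplus_{n=0}^{\infty}\cY_n$ and the subspaces $L^2_\Omega(S^d)$ increase to $L^2(S^d)$ as $\Omega\to\infty$, the projections $P_\Omega$ converge strongly to the identity: writing $f=\sum_{n\ge 0}f_n$ with $f_n\in\cY_n$ the orthogonal components, I get $\|f-f*\phi_\Omega\|_2^2=\sum_{n>\Omega}\|f_n\|_2^2\to 0$ as the tail of the convergent series $\sum_n\|f_n\|_2^2=\|f\|_2^2$. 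Alternatively one can apply the vector-valued Plancherel theorem of Section \ref{S:6} to $\widetilde{f*\phi_\Omega}=\I_{\{0,\dots,\Omega\}}\,\widetilde f$ together with dominated convergence.

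There is no genuine obstacle here; the one point to keep consistent across all four parts is the normalization — namely that $\widehat{\phi_\Omega}$ is exactly the indicator of the band $\{0,\dots,\Omega\}$ and that the Plancherel measure attaches mass $d(n)=\dim\cY_n$ to $n$ (equivalently $\|\varphi_n\|_2^2=1/d(n)$, that is, the addition theorem for spherical harmonics) — so that (a) and (c) both yield the quantity $\sum_{n=0}^{\Omega}d(n)$.
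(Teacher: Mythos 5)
Your proof is correct, and it proceeds exactly along the lines the paper intends: the paper states Lemma \ref{phi-prop} without proof, leaving it as a routine consequence of the expansion \eqref{phiSphere}, the normalization $\Phi_n(1)=\varphi_n(e_1)=1$, the fact that $\widehat{\phi_\Omega}$ is the indicator of $\{0,\dots,\Omega\}$ with Plancherel mass $d(n)$ at each $n$ (equivalently $\|\varphi_n\|_2^2=1/d(n)$), and the identification of $f\mapsto f*\phi_\Omega$ with the orthogonal projection onto $\bigoplus_{n\le\Omega}\cY_n$, whose strong convergence to the identity gives (d). No gaps; your consistency remark about the normalization is precisely the point that makes (a) and (c) agree (and note the domain of integration in (d) should be $S^d$, a typo in the paper's statement).
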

Let $N(\Omega )=\dim L^2_\Omega (S^d)=1+d(1)+\ldots +d(\Omega)$. Then
every set of points $\{\omega_j\in S^d\mid j=1,\ldots ,N(\Omega )\}$
such that the functions $F_{\phi_\Omega}((\cdot ,\omega_j))$ are linearly independent
will give us a basis (and hence a frame) for $L^2_\Omega (S^d)$. Further
$N(\Omega )$ is the minimal number of points so that the sampling will
determine a arbitrary function $f\in L^2_\Omega (S^d)$.
If $n> N(\Omega )$, then the
functions $\{F_{\phi_\Omega} ((\cdot ,\omega_j))\}_{j=1}^n$
will form a frame if
and only if the set is generating.

Let us come back to the special case $S^3\simeq \SU (2)$. The set
$\Lambda\simeq \widehat{\SU (2)}$ is isomorphic to $\N$ in such a way
that $d (n)=d (\pi_n)=n+1$. Every element in $\SU (2)$ is conjugate to
a matrix of the form
\[u(\theta )=\begin{pmatrix} e^{i\theta }&0\\ 0 &
  e^{-i\theta}\end{pmatrix}\, .\] We have
\[\varphi_n(u(\theta ))=\frac{1}{n+1}\chi_{\pi_n}(u(\theta
))=\frac{1}{n+1}\frac{\sin ((n+1)\theta)}{\sin (\theta )}\, .\]
It follows that
\begin{eqnarray*}
  \phi_\Omega (u(\theta ))& =&\frac{1}{\sin (\theta )}\sum_{n=1}^{\Omega+1}\sin (n\theta )\\
  &=&\frac{1}{2 i \sin (\theta )}\left(\sum_{n=1}^{\Omega +1}\left(e^{i\theta }\right)^n -\sum_{n=1}^{\Omega +1}\left(e^{-i\theta }\right)^n\right)\\
  &=& \frac{\sin ((\Omega +2)\theta/2)\sin ((\Omega +1)\theta/2)}{\sin (\theta )\sin (\theta /2)}\, .
\end{eqnarray*}
\subsection{Symmetric Spaces of the Compact Type} We will not discuss
the general case of symmetric spaces $\bX=G/K$ of the compact type to
avoid introducing too much new notation, but the general case follows
very much the same line as the special case of the sphere. Recall that
that ``symmetric space of the compact type'' means that the group $G$
is compact and there exists an involution $\tau : G\to G$ such that
with $G^\tau =\{u\in G\mid \tau (u)=u\}$ we have
\[ (G^\tau)_o\subseteq K\subseteq G^\tau\, .\] An example is the
sphere $S^d$ where as in the last subsection $G=\SO_o(d+1)$ and the
involution $\tau $ is given by
\[u\mapsto \begin{pmatrix} -1 & 0\\ 0 &
  I_d\end{pmatrix}u\begin{pmatrix} -1 & 0\\ 0 & I_d
\end{pmatrix}=\begin{pmatrix} u_{11} & -v^t \\ -v & k\end{pmatrix}\]
as in Example \ref{e:sphere}. All of those spaces are commutative. The
spectral set $\Lambda$ is well understood, see \cite{He84}, Theorem
4.1, p. 535. In particular $\Lambda$ is discrete. Each representation
$(\pi_\lambda ,\cH_\lambda)$ occur with multiplicity one in
$L^2(\bX)$. Denote the image by $\cY_\lambda$. Then if $\Omega$ is
given, there exists a finite set $\Lambda (\Omega )\subset \Lambda$
such that
\[ L^2_\Omega (\bX) =\bigoplus_{\lambda \in\Lambda (\Omega
  )}\cY_\lambda\] and $N (\Omega )=\dim L^2_\Omega (\bX )$ is
finite. In particular, only finitely many points are needed to
determine the elements in $L^2_\Omega (\bX)$.

The spherical functions are well understood. They are given by the
generalized hypergeometric functions (and Jacobi polynomials) of
Heckman and Opdam \cite{HeSch94}. Again
\[\phi_\Omega =\sum_{\lambda \in \Lambda (\Omega )} d(\pi_\lambda
)\varphi_\lambda \in C^\omega (\bX)\] the Dirichlet
kernel. Furthermore, Lemma \ref{phi-prop} holds true in the general
case.

\subsection{Gelfand pairs for the Heisenberg group}

For the details in the following
discussion we refer to
\cite{Strichartz1991} and \cite{Benson1992}.
We let $\bbH_n=\C^n\times\R$ denote the
$2n+1$-dimensional Heisenberg group with group composition
\begin{equation*}
  (z,t)(z',t') = (z+z',t+t'+\frac{1}{2}\mathrm{Im}(\overline{z}z')).
\end{equation*}
Denoting $z=x+iy$ the Heisenberg group is equipped with the left and
right Haar measure $dx\,dy\,dt$ where $dx,dy,dt$ are Lebesgue meaures
on $\R^n$,$\R^n$ and $\R$ respectively.  The group $K=\rU (n)$ acts on
$\bbH_n$ by group homomorphism given by
\begin{equation*}
  k\cdot (z,t) = (kz,t)\, .
\end{equation*}
Let $G=K\ltimes \bbH_n$.  It follows that $L^p(G/K) \simeq
L^p(\bbH_n)$ and $L^p(G/K)^K \simeq L^p(\bbH_n)^K =
L^p(\bbH_n)_{rad}$.  It is known \cite{Benson1990}
that
the algebra $L^1(\bbH_n)^K$ of integrable radial functions on $\bbH_n$
is commutative and thus $(G,K)$ is a Gelfand pair.  This is also the
case for several other subgroups of $U(n)$ as shown in
\cite{Benson1990}.

\subsubsection{Representation Theory for $G$}

A collection of important representations for the Heisenberg group are
the Bargman representations for $\lambda > 0$ given by
\begin{equation*}
  \pi_\lambda(z,t)f(w)
  = e^{i\lambda t -\lambda \mathrm{Im}(w\overline{z})/2 -\lambda |z|^2/4}f(w+z)
\end{equation*}
is the Bargman representation of the Fock space $\cF_\lambda$ of
entire functions on $\C^n$ for which
\begin{equation*}
  \| F\|_\lambda^2
  = \Big(\frac{\lambda}{2\pi} \Big)^n
  \int_{\C^n} |F(z)|^2e^{-\lambda |z|^2/2}\,dz < \infty.
\end{equation*}
For $-\lambda<0$ define the representations
\begin{equation*}
  \pi_{-\lambda}(z,t)f(w) = \pi_\lambda(\overline{z},t)f(w)
\end{equation*}
on the anti-holomorphic functions $\overline{\cF_\lambda}$.  These
representation are irreducible and the left regular representation of
$\bbH_n$ on $L^2(\bbH_n)$ decomposes as
\begin{equation*}
  \int^\oplus_{\R^*} (\pi_\lambda,\cF_\lambda) \,|\lambda|^n \,d\lambda.
\end{equation*}
We should note that there are more irreducible representations than
the $\pi_\lambda$, but they are one-dimensional and do not show up in
the Plancherel formula (they are of Plancherel measure 0).

Let us now turn to the regular representation of $G$ on $L^2(\bbH_n)$
given by
\begin{equation*}
  \ell(k,z,t)f(z',t')
  = f(k^{-1}(z'-z),t'-t-\frac{1}{2}\mathrm{Im}(z'\overline{z})).
\end{equation*}
Notice that $U(n)$ acts only on the $z$-variable, and for fixed $k\in
U(n)$ the elements $G_k = \{(kz,t) \mid (z,t)\in \bbH_n \}$ is a group
isomorphic to $\bbH_n$. Thus the left regular representation of $G_k$
on $L^2(G_k)$ can be decomposed using the Bargman representations. We
get
\begin{equation*}
  (\ell,L^2(G_k))
  =  \int^\oplus_{\R^*} (\pi_\lambda^k,\cF_\lambda) \,|\lambda|^n \,d\lambda
\end{equation*}
where $\pi_\lambda^k(z,t) = \pi_\lambda(kz,t)$.  Note that with
$\nu(k)f(w) = f(k^{-1}w)$ we have
$$\pi_\lambda^k(z,t) = \nu(k)\pi_\lambda(z,t)\nu(k)^{-1}.$$
Denote by $\pi_{\lambda,m}$ the representation $\nu$ restricted to the
homogeneous polynomials of degree $m$ $V_{\lambda,m}$.  Then
$(\nu,\cF_\lambda)$ decomposes into
\begin{equation*}
  \bigoplus_{m=0}^\infty (\pi_{\lambda,m},V_{\lambda,m})
\end{equation*}
Note that $\dim(V_{\lambda,m})=2m+n$, $\dim(V_{\lambda,m}^K)=1$.  Let
$H_{\lambda,m}$ be the Hilbert space spanned by
$\pi_\lambda(G )u_\lambda$ with $u_\lambda$ in
$V_{\lambda,m}^K$. The representations of $G$ on $H_{\lambda,m}$ thus
obtained are irreducible \cite{Strichartz1991},
and provide us with a
decomposition of the left regular representation of $G$ on
$L^2(\bbH_n)$:
\begin{equation*}
  (\ell,L^2(\bbH_n))
  = \bigoplus_{m=0}^\infty \int^\oplus_{\R^*}
  (\pi_{\lambda,m},H_{\lambda,m}) \,|\lambda|^n \,d\lambda.
\end{equation*}

\subsubsection{Spherical functions}
The bounded $U(n)$-spherical functions in this case are
\begin{align*}
  \phi_{\lambda,m}(z,t) = \int_{U(n)}
  (\pi_\lambda^k(z,t)u_\lambda,u_\lambda)_{\cF_\lambda}\,dk =
  e^{i\lambda t} L_m^{(n-1)}\Big(|\lambda||z|^2/2
  \Big)e^{-|\lambda||z|^2/4}
\end{align*}
for $\lambda\in\R\setminus\{ 0\}$ and $m=0,1,2,\cdots$.
Here $L_m^{(n-1)}$ is the Laguerre polonomial of degree $m$ and order
$n-1$
\begin{equation*}
  L_m^{(n-1)}(x)
  = {m+n-1  \choose m}^{-1}
  \sum_{k=0}^m (-1)^k {m+n-1  \choose m-k} \frac{x^k}{k!}
\end{equation*}

In this fashion the spectrum for $L^1(\bbH_n)^K$ can be identified
with the Heisenberg fan \cite{Strichartz1991}
\begin{equation*}
  \fF_n
  = \{ ((2m+n)|\lambda|,\lambda) \mid m\in\N_0,\lambda\neq 0\}
  \cup \R_+
\end{equation*}
with Plancherel measure supported on $\Lambda = \{
((2m+n)|\lambda|,\lambda) \mid m\in\N_0,\lambda\neq 0\}$ and given
explicitly as
\begin{equation*}
  \int_\Lambda F(\phi)\,d\mu(\phi)
  = \int_{\R^*} \sum_{m\in\N_0}
  (2m+n) F(\phi_{\lambda,m})|\lambda|^n\,d\lambda.
\end{equation*}
As shown by \cite{Benson1996} and more generally in \cite{Ruffino} the
topologies on $\fF_n$ and $\Lambda$ is the topology inherited from
$\R^2$.

\subsubsection{Sampling and oversampling of band-limited functions}
Let $L^2_\Omega(\bbH_n)$ be the space of functions in
$L^2(\bbH_n)$ with Fourier transform supported in
$$\Omega = ((2m+n)|\lambda|,\lambda) \mid m=0,\cdots,M ;0<|\lambda|\leq R\}.$$
In this case the sinc-type function is given by the integral
\begin{align*}
  \phi(z,t) &= \sum_{m=1}^M \int_{0}^R e^{i\lambda t}
  L_m^{(n-1)}\Big(|\lambda||z|^2/2 \Big)e^{-|\lambda||z|^2/4}
  \,d\lambda \\ &\qquad+ \int_{0}^R e^{-i\lambda t}
  L_m^{(n-1)}\Big(|\lambda||z|^2/2 \Big)e^{-|\lambda||z|^2/4}
  \,d\lambda \\
  &= \sum_{m=1}^M \int_{0}^R 2\cos(\lambda t)
  L_m^{(n-1)}\Big(|\lambda||z|^2/2 \Big)e^{-|\lambda||z|^2/4}
  \,d\lambda.
\end{align*}
Let $x_iU$
with $x_i\in G$ be a cover of the group $G=\bbH_n\ltimes K$, then
$x_iKUK$ covers the Heisenberg group $\bbH_n$.  Let $\psi_i$ be a
bounded partition of unity, which could for example characteristic
functions for disjoint sets $U_i\in x_iU$.  The operator $T$ then has
the form
\begin{equation*}
  Tf = \sum_i f(x_iK)\psi_i*\phi
\end{equation*}
where $\phi$ is given above. Choosing $x_i$ close enough we can invert
$T$ to obtain sampling results.

Another interesting application is related to oversampling.  Let
$\widehat{\phi}$ be a compactly supported Schwartz function on $\R^2$
such that its restriction to $\Omega$ is $1_\Omega$.  Let us say the
support is in $\Omega_1$.  According to \cite{Astengo2009} there is a
Schwartz function $\phi$ on $\bbH_n$ such that its Fourier transform
is equal to $\widehat{\phi}$ restricted to $\Omega_1$.  The function
$\phi$ is therefore both integrable and band-limited.
Let $\phi_1$ be the sinc function associated to $\Omega_1$, then
choosing $x_i$ close enough (closer than for the operator $T$ to
ensure that also $T_1$ is invertible) the operator
\begin{equation*}
  T_1 f = \sum_i f(x_iK)\psi_i*\phi_1
\end{equation*}
becomes invertible on $L^2_{\Omega_1}(\bbH_n)$. Therefore
\begin{equation*}
  f = \sum_i f(x_iK) T_1^{-1}(\psi_i*\phi_1)
\end{equation*}
with convergence in $L^2_{\Omega_1}(\bbH_n)$.  For
$f\in L^2_\Omega(\bbH_n)$ we then
also get, since $f=f*\phi$, that
\begin{equation*}
  f = \sum_i f(x_iK) T_1^{-1}(\psi_i*\phi_1)*\phi
\end{equation*}
and this time with convergence in $L^2_\Omega(\bbH_n)$.

\begin{remark}
  This oversampling situation is not possible for symmetric spaces of
  non-compact type. The reason is that there are no integrable
  band-limited functions with Fourier transform constant on a set with
  limit point.
\end{remark}


\begin{thebibliography}{99}

\bibitem{Aronszajn1950}
  {N. Aronszajn, }, \textit{Theory of reproducing kernels},
  {Trans. Amer. Math. Soc.}, \textbf{68}, (1950), {337--404}.

\bibitem{Astengo2009}
    {F. Astengo,  B.  Di Blasio and  F. Ricci, F.},
    \textit{Gelfand pairs on the {H}eisenberg group and {S}chwartz functions},
   {J. Funct. Anal.}, \textbf{256}, (2009),{1565--1587}.
\bibitem{Benedetto1991} J. Benedetto, Irregular sampling and frames. In: Ed. C. Chui, \textit{Wavelets - A Tutorial in Theory and Applications}. Academic Press, 1991, 1--63.
\bibitem{BenedettoHeller1990} J. Benedetto and W. Heller, Irregular sampling and the theory of frames. Part I, \textit{Note Mathematica, X}, Suppl. 1 (1990), 103--125.
\bibitem{Benson1990}    {C. Benson, J. Jenkins  and G. Ratcliff},
   {On {G}el$'$ fand pairs associated with solvable {L}ie groups},
   \textit{Trans. Amer. Math. Soc.}, \textbf{321}, (1990), {85--116}.

\bibitem{Benson1992}  \bysame ,
  {Bounded {$K$}-spherical functions on {H}eisenberg groups},
  \textit{J. Funct. Anal.}, \textbf{105}, 2, {(1992)}, {409--443}

\bibitem{Benson1996} \bysame  {Spectra for {G}elfand pairs associated with the
    {H}eisenberg group}, \textit{Colloq. Math.}, \textbf{71}, {(1996)},{305--328}

\bibitem{Christensen2011} J.~G. Christensen.  \newblock Sampling in
  reproducing kernel {B}anach spaces on {L}ie groups.  \newblock {\em
    ArXiv}, 2011.
\bibitem{ChristOlafss2009} J. G. Christensen and G.  {\'O}lafsson,
Examples of coorbit spaces for dual pairs, \textit{Acta Appl. Math.} \textbf{107} (2009), 25–48.

\bibitem{ChristOlafsson2011} \bysame , Coorbit spaces for dual pairs, \textit{Appl. Comp. Harmonic Anal.}
\textbf{31},  (2011), 303-324.

\bibitem{ChrisMayeliOlafss2011} J. G. Christensen, A. Mayeli, G. {\'O}lafsson, Sampling on commutative spaces. In preparation.

\bibitem{Ole} O. Christensen, \textit{An introduction to frames and Riesz bases}, Applied and Numerical Harmonic Analysis. Birkh\"auser, 2003.
\bibitem{D79} J. Dieudonne\'e, \textit{Grundz\"uge der modernen Analysis}, Band
  5/6. Vieweg 1979.
\bibitem{Dijk2009} G. van Dijk, \textit{Introduction to Harmonic Analysis and
  Generalized Gelfand Pairs}. Studies in Mathematics \textbf{36}, de
  Gruyter, 2009.

\bibitem{Duffin1952}
  {R. J. Duffin and A. C. Schaeffer},
  {A Class of Nonharmonic Fourier Series},
  {Transactions of the American Mathematical Society},
  {72}, {2}, {1952}, {341--366}, {American Mathematical Society}

\bibitem{Faraut2008} J. Faraut, \textit{Analysis on Lie Groups, An
  Introduction}. Cambridge studies in advanced mathematics
  \textbf{110}. Cambridge University Press, 2008.
\bibitem{Feichtinger1991}

\bibitem{Feicht1991} H. G. Feichtinger, Discretization of convolution and reconstruction of
band-limited functions from irregular sampling. \textit{Progress in approximation theory}, 333–345, Academic Press, 1991.


\bibitem{Feicht1990} H. G. Feichtinger, Coherent frames and irregular sampling. In: Ed. J. S. Byrnes,
\textit{Recent advances in Fourier analysis and its applications}, 427–440, NATO Adv. Sci. Inst. Ser. C Math. Phys. Sci., \textbf{315}, Kluwer Acad. Publ.,  1990.


\bibitem{FeichtGroech1986} H. G. Feichtinger, K. H. Gr\"chenig, A unified approach to atomic decompositions via integrable group representations. In: Function Spaces and Applications, Lund, 1986. Lecture Notes in Math., \textbf{1302},  52–73. Springer,  (1988)

\bibitem{FeichtGroech1990} \bysame ,  Multidimensional irregular sampling of band-limited functions in $L^p$-spaces. Multivariate approximation theory, IV (Oberwolfach, 1989), 135–142, \textit{Internat. Ser. Numer. Math.}, \textbf{90}, Birkh\"auser, 1989.
\bibitem{FeichtGroech1989a} \bysame, Banach spaces related to integrable group representations and their atomic decompositions. I. \textit{J. Funct. Anal.} \textbf{86}, 307–340 (1989)
\bibitem{FeichtGroech1989b} \bysame , Banach spaces related to integrable group representations and their atomic decompositions. II. \textit{Monatsh. Math.} \textbf{108}, (1989), 129–148.

\bibitem{FeichtGroech1992} \bysame , Iterative reconstruction of multivariate band-limited functions from irregular sampling values. \textit{SIAM J. Math. Anal.} \textbf{23} (1992), 244–261.

\bibitem{FeichtGroech1992a}  \bysame ,  Irregular sampling theorems and series expansions of band-limited functions.
\textit{J. Math. Anal. Appl.} \textbf{167} (1992),  530–556.

\bibitem{FeichtGroech1994} \bysame , Theory and practice of irregular sampling. In: Ed. J. Benedetto and M. W>
Fraxier, \textit{Wavelets: mathematics and applications}, 305–363, Stud. Adv. Math., CRC.

\bibitem{FeichtPrandey2003} H. G.  Feichtinger and S. S.  Pandey,  Error estimates for irregular sampling of band-limited functions on a locally compact abelian group. \textit{J. Math. Anal. Appl.} \textbf{279} (2003),  380–397.

\bibitem{FeichtPesenson2004} H. Feichtinger and  I. A. Pesenson, Recovery of band-limited functions on manifolds by an iterative algorithm. Wavelets, frames and operator theory, 137–152, In: Ed. C. Heil, P. E. T. Jorgensen, D. R. Larson (Ed.) \textit{Wavelets, Frames and Operator Theory}, \textit{Contemp. Math.}, \textbf{345}, 137--152, Amer. Math. Soc.,  2004,

\bibitem{FeichtPesenson2005} \bysame  , A reconstruction method for band-limited signals on the hyperbolic plane. \textit{Sampl. Theory Signal Image Process.} \textbf{4} (2005), 107–119.

\bibitem{FollandHA} G. B. Folland, \textit{A Course in Abstract
    Harmonic Analysis}, Studies in Advanced Mathematics, CRC Press,
  1995.
\bibitem{Fuhr2005} H.    F\"uhr, \textit{Abstract Harmonic Analysis of Continuous Wavelet Transforms} Lecture Notes in Mathematics, \textbf{1863} Springer-Verlag, 2005.

\bibitem{FuhrGroechenig2007} H.  F\"uhr and  K. Gr\"chenig, Sampling theorems on locally compact groups from oscillation estimates, \textit{Math. Z.} \textbf{255} (2007),  177–194.

\bibitem{GellerPesenson2011} D.    Geller and I. Z.  Pesenson,  Band-limited localized Parseval frames and Besov spaces on compact homogeneous manifolds. \textit{J. Geom. Anal.} \textbf{21} (2011),  334–371.

\bibitem{Grochenig1993a}
  {K. Gr{\"{o}}chenig },
  {Acceleration of the frame algorithm},
  {IEEE Transactions on Signal Processing},
  {41}, {1993}, {3331--3340},{12}


\bibitem{HeSch94} G. Heckman and H. Schlichtkrull, \textit{Harmonic
    Analysis and Special Functions on Symmetric Spaces}. Perspectives
  in Mathematics \textbf{16}, Academic Press 1994.
  \bibitem{Heil} C. Heil, \textit{A Basis Theory Primer: Expanded Edition},  Applied and Numerical Harmonic Analysis. Birkh\"auser/Springer, 2011.
\bibitem{He63} S. Helgason, \textit{Differential operators on
    homogeneous spaces}. Acta Math.  \textbf{102} (1959), 249--299.

\bibitem{He78} S. Helgason, \textit{Differential Geometry, Lie Groups,
    and Symmetric Spaces}, Amer. Math. Soc., Providence, RI, 2001.

\bibitem{He84} \bysame , \textit{Groups and Geometric Analysis},
  Academic Press, 1984.

\bibitem{Higgins1985}  J. Higgins, Five short stories about the cardinal series, \textit{Bull. Amer. Math. Soc.}, \textbf{12} (1985), 45--49.
\bibitem{Lebedev1972} N. N. Lebedev, \textit{Special Functions {\&}
    their Applications}. Dover 1972.

\bibitem{Li1993}
  {Li, S.},
  \emph{The theory of multiresolution analysis frames and
    applications}, Ph.D. dissertation,
  University of Maryland Baltimore County, United States, 1993

\bibitem{Lim1998}
  {Lim, J. K.},
  \emph{Neumann series expansion of the inverse of a frame operator},
  {Commun. Korean Math. Soc.},
  {13}, {1998}, {4}, {791--800},{1225-1763}




\bibitem{os2} G. {\'O}lafsson and H. Schlichtkrull, Representation theory, Radon transform and the
  heat equation on a Riemannian symmetric space. \textit{Group
    Representations, Ergodic Theory, and Mathematical Physics; A
    Tribute to George W. Mackey}.  In: \textit{Contemp. Math.},
  \textbf{449} (2008), 315--344.

\bibitem{Pesenson2008} I. Pesenson,
A discrete Helgason-Fourier transform for Sobolev and Besov functions on noncompact symmetric spaces. Radon transforms, geometry, and wavelets. In  G. {\'O}lafsson, E. L. Grinberg, D. Larson, Palle E. T. Jorgensen, P. R. Massopust,
E. T. Quinto, and B. Rubin, \textit{Radon Transforms, Geometry, and Wavelets},  \textit{Contemp. Math.}, \textbf{464},  231–247, Amer. Math. Soc.,  2008.

\bibitem{Ruffino} F. F. Ruffino, \textit{The Topology of the Spectrum
    for Gelfand Pairs on Lie Groups}, Bullettino U.M.I. (8),
  \textbf{10-B} (2007), 569-579.
\bibitem{T84} E. F. G. Thomas, An infinitesimal characterization of
  Gel'fand pairs. \textit{Contemp. Math.}  \textbf{26} (1984),
  379--385.
\bibitem{Shannon1948} C. E. Shannon,
A mathematical theory of communication.
\textit{Bell System Tech. J.} \textbf{27}, (1948), 379–-423, 623–-656.

\bibitem{Strichartz1983}
  {R. S. Strichartz},
  \textit{Analysis of the {L}aplacian on the complete {R}iemannian manifold},
   {J. Funct. Anal.},
   {52}, {1983}, {1}, {48--79}

\bibitem{Strichartz1991}
    {Strichartz, R. S.},
    {{$L^p$} harmonic analysis and {R}adon transforms on the
      {H}eisenberg group},
    \textit{J. Funct. Anal.}, \textbf{96}, {(1991)},{350--406}.

\bibitem{Takeuchi1994} M. Takeuchi, \textit{Modern Spherical
  Runctions}. Transactions of Mathematical Monographs \textbf{135}, AMS
  1994.

\bibitem{Triebel1986} H.~Triebel, Spaces of {B}esov-{H}ardy-{S}obolev
  type on complete {R}iemannian manifolds.  {\em Ark. Mat.},
  \textbf{24} 299--337, 1986.

\bibitem{Triebel1988} \bysame , Characterizations of Besov-Hardy-Sobolev spaces: A unified approach. \textit{J. Approx. Theory}
 \textbf{52},  (1988), 162–203.

\bibitem{Wolf} J. A. Wolf, \textit{Harmonic Analysis on Commutative
  Spaces}. Mathematical Surveys and Monographs, \textbf{142}. American
  Mathematical Society, Providence, RI, 2007.

\bibitem{Rudin1991} W.~Rudin, {\em Functional analysis}.
  \newblock International Series in Pure and Applied
  Mathematics. McGraw-Hill Inc., New York, second edition, 1991.

\bibitem{Zayed1993} A. I.~Zayed, {\em Advances in Shannon's Sampling Theory}, CRC, 1993.
\bibitem{Zayed2004} \bysame , A prelude to sampling, wavelets, and tomography. In: Ed. J. J. Benedetto and A. I.~Zayed, \textit{Sampling, Wavelets, and Tomography}, 1–32, Appl. Numer. Harmon. Anal., Birkh\"auser, 2004.

\end{thebibliography}
\end{document}